\definecolor{dullmagenta}{rgb}{0.4,0,0.4}   
\definecolor{darkblue}{rgb}{0,0,0.4}
\newtheorem{thrm}{Theorem}[section]
\newtheorem{prpstn}[thrm]{Proposition}
\newtheorem{cnjctr}[thrm]{Conjecture}
\newenvironment{rmrk}%
  {\par\medbreak\refstepcounter{thrm}%
    \noindent\textbf{Remark~\thethrm. }}%
  {\par\medskip}
\newcommand{\vol}{\mathrm{vol}}
\title{Maximization of Laplace-Beltrami eigenvalues \\ on closed Riemannian surfaces }
\date{\today}
\author{Chiu-Yen Kao\thanks{Department of Mathematical Sciences, Claremont McKenna College, CA 91711 ({\tt Ckao@ClaremontMckenna.edu}). Chiu-Yen Kao is partially supported by NSF DMS-1318364.}, Rongjie Lai\thanks{Department of Mathematics, Rensselaer Polytechnic Institute, NY 12180 ({\tt lair@rpi.edu}). Rongjie Lai is partially supported by NSF DMS-1522645.}, and Braxton Osting\thanks{Corresponding author. Department of Mathematics, University of Utah, Salt Lake City, UT 84112 ({\tt osting@math.utah.edu}). Braxton Osting is partially supported by NSF DMS-1103959 and DMS-1461138.} 
}
\begin{document}
\maketitle
\begin{abstract} 
Let $(M,g)$ be a connected, closed, orientable Riemannian surface and denote by $\lambda_k(M,g)$ the $k$-th eigenvalue of the Laplace-Beltrami operator on $(M,g)$. In this paper, we consider  the mapping $(M, g)\mapsto \lambda_k(M,g)$. We propose a computational method for finding  the conformal spectrum $\Lambda^c_k(M,[g_0])$, which is defined by the eigenvalue optimization problem of maximizing $\lambda_k(M,g)$ for $k$ fixed as $g$ varies within a conformal class $[g_0]$ of fixed volume $\vol(M,g) = 1$. We also propose a computational method for the problem where $M$ is additionally allowed to vary over surfaces with fixed genus, $\gamma$. 
This is known as the topological spectrum for genus $\gamma$ and denoted  by $\Lambda^t_k(\gamma)$.  
Our computations support a conjecture of N. Nadirashvili (2002)
that $\Lambda^t_k(0) = 8 \pi k$, attained by a sequence of surfaces degenerating to a union of $k$ identical round spheres. Furthermore, based on our computations, we conjecture that 
$\Lambda^t_k(1) = \frac{8\pi^2}{\sqrt{3}} + 8\pi (k-1)$, attained by a sequence of surfaces degenerating into a union of an equilateral flat torus and $k-1$ identical round spheres. 
The values are compared to several surfaces where the Laplace-Beltrami eigenvalues are well-known, including spheres, flat tori, and embedded tori. In particular, we show that among flat tori of volume one, the  $k$-th Laplace-Beltrami eigenvalue has a local maximum with value  
$\lambda_k = 4\pi^2 \left\lceil \frac{k}{2} \right\rceil^2 \left( \left\lceil \frac{k}{2} \right\rceil^2  - \frac{1}{4}\right)^{-\frac{1}{2}}$. 
Several properties are also studied computationally, including uniqueness, symmetry, and eigenvalue multiplicity. \end{abstract}

\paragraph{Keywords.}Extremal Laplace-Beltrami eigenvalues, conformal spectrum, topological spectrum, closed Riemannian surface, spectral geometry, isoperimetric inequality

\section{Introduction}
Let $(M,g)$ be a connected, closed, orientable Riemannian surface and $\Delta_{M,g} \colon C^\infty(M) \to C^\infty(M)$ denote the Laplace-Beltrami operator. 
The Laplace-Beltrami eigenproblem is to find eigenvalues $\lambda(M,g)$ and eigenfunctions, $\psi(x;M,g)$ for  $x\in M$, satisfying 
\begin{equation}
\label{eq:eig}
-\Delta_{M,g} \ \psi(x; M,g) = \lambda(M,g) \  \psi(x;M,g) \qquad x \in M. 
\end{equation}
Denote the spectrum of $-\Delta_{M,g}$  by
$\sigma (M,g) := \{ 0=\lambda_0(M,g) < \lambda_1(M,g) \leq \ldots \}$.
 For a general introduction to properties of $\Delta_{M,g}$ and $\sigma (M,g)$, we refer to \cite{Chavel1984,Berard1985}. 
Given a fixed manifold $M$, consider the mapping $g\mapsto \sigma(M,g) $. 
Let $G(M)$ denote the class of Riemannian metrics $g$ on $M$.
We recall that a metric $g$ is \emph{conformal} to $g_0$ if there exists a smooth  function $\omega\colon M \to \mathbb R_+$ such that $g = \omega g_0$. The \emph{conformal class}, $[g_0]$, consists of all metrics conformal to $g_0$. 
Following \cite{Colbois2003}, for $k$ fixed, we define the \emph{conformal $k$-th eigenvalue of $(M,[g_0])$} to be 
\begin{equation}
\label{eq:ConEig}
\Lambda^c_k(M,[g_0]) := \sup \{  \Lambda_k (M,g) \colon g \in [g_0] \}, 
\end{equation}
where $\Lambda_k (M,g) := \lambda_k(M,g) \cdot \vol(M,g)$.\footnote{Note that by the dilation property of eigenvalues, $\lambda_k(M,cg) = c^{-1} \lambda_k(M,g)$, this is equivalent to minimizing $\lambda_k(M,g_0)$ over $\{g\in [g_0]\colon \vol(M,g) = 1\}$.}
Let $\mathcal M(\gamma)$ denote the class of orientable, closed surfaces with genus $\gamma$ and consider the mapping $(M,g)\mapsto \sigma(M,g)$. 
For $k$ fixed, the $k$-th  \emph{topological eigenvalue for genus $\gamma$} is defined 
\begin{equation}
\label{eq:TopEig}
\Lambda^t_k(\gamma) := \sup \{  \Lambda_k (M,g) \colon M \in \mathcal M(\gamma), \ g \in G(M) \}. 
\end{equation}
The conformal and topological eigenvalues are finite; see \S\ref{sec:background}. We refer to the conformal eigenvalues and topological eigenvalues collectively as the conformal spectrum and topological spectrum, respectively. 

For some conformal classes,   the first few conformal eigenvalues are known explicitly.  However,  little is known about the larger conformal eigenvalues  of any conformal class, $(M,[g_0])$. The topological spectrum is only known for $\gamma = 0$ with $k=1,2$ and $\gamma=1$   with $k=1$ (a conjecture exists for $\gamma=2$, $k=1$). We discuss these results and provide some references in  \S\ref{sec:background}. 

In this work, we study the conformal and topological spectra computationally. To the best of our knowledge, this is the first computational study of the conformal and topological spectra. 
To achieve this goal, for constants $\omega_+ > \omega_- > 0$,  we define the admissible set,
$$
\mathcal A(M,g_0,\omega_-, \omega_+) := \{ \omega \in L^\infty(M)\colon  \omega_-\leq \omega \leq \omega_+ \ a.e. \}. 
$$
For a fixed Riemannian surface $(M,g_0)$ and a function $\omega \in \mathcal A(M,g_0,\omega_-, \omega_+) $, we consider the generalized eigenvalues, characterized by the Courant-Fischer formulation
\begin{equation}
\label{eq:GenEigs}
\lambda_{k-1}(M,g_0,\omega)= \min_{\begin{array}{c}
E_{k}\subset H^{1}(M)\\
\text{subspace of dim }k
\end{array}}\max_{\psi\in E_{k},\psi\neq0}\frac{\int_{M}| \nabla \ \psi|^{2} d\mu_{g_0} }{\int_{M}\psi^{2} \omega d\mu_{g_0}}, 
\end{equation}
where $E_{k}$ is in general a $k$-dimensional subspace of $H^{1}(M)$ and $d\mu_{g_0}$ is the measure induced by the metric $g_0$.  Note that for $\omega \in \mathcal C^\infty \cap \mathcal A(M,g_0,\omega_-, \omega_+) $, the identity $\Delta_{M,\omega g} = \frac{1}{\omega} \Delta_{M,g}$ implies that $\lambda_{k}(M,g_0,\omega) =  \lambda_k(M,\omega g_0)$. As above, we  define a volume-normalized quantity, $\Lambda_k(M,g,\omega) = \lambda_k(M,g,\omega) \cdot \int_M \omega d\mu_g$  
and consider the optimization problem,
\begin{equation}\label{eq:opt:Linfty} 
\Lambda^\star_k(M,g_0,\omega_-,\omega_+) = \sup \{ \Lambda_k (M,  g_0, \omega)\colon  \omega \in \mathcal A(M,g_0,\omega_-, \omega_+) \}.
\end{equation}  
\begin{prpstn} \label{prop:Existence} Fix $k\in \mathbb N$. 
Let $(M,g_0)$ be a smooth, closed Riemannian surface and $0< \omega_-< \omega_+$. Then there exists an $\omega^\star \in \mathcal A(M,g_0,\omega_-, \omega_+) $ which attains $\Lambda^\star_k(M,g_0,\omega_-,\omega_+)$, the supremum in \eqref{eq:opt:Linfty}.
Furthermore, for any $\epsilon >0$, there exist constants $\omega_+(\epsilon)$ and $\omega_-(\epsilon)$ satisfying  $\omega_+(\epsilon) > \omega_-(\epsilon) > 0$ such that
$$
\Lambda^c_k(M,[g_0])   - \epsilon \leq  \Lambda^\star_k\left(M,g_0,\omega_-(\epsilon),\omega_+(\epsilon)\right) \leq \Lambda^c_k(M,[g_0])   . 
$$
\end{prpstn}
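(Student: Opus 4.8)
The plan is to establish the two assertions in turn. The supremum in \eqref{eq:opt:Linfty} will be shown to be attained by the direct method in the weak-$*$ topology of $L^\infty(M)$, and the sandwiching of $\Lambda^\star_k$ between $\Lambda^c_k(M,[g_0])-\epsilon$ and $\Lambda^c_k(M,[g_0])$ will follow from a smoothing argument together with the definition of $\Lambda^c_k$ as a supremum. The technical heart of both parts is the \emph{continuity of $\omega\mapsto\lambda_k(M,g_0,\omega)$ under weak-$*$ convergence} of $\omega$ within $\mathcal A(M,g_0,\omega_-,\omega_+)$, which I would isolate as a lemma and prove first.

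To prove that continuity, write $a(u,v)=\int_M\nabla u\cdot\nabla v\,d\mu_{g_0}$ and $b_\omega(u,v)=\int_M uv\,\omega\,d\mu_{g_0}$ on $H^1(M)$, so $\lambda_k(M,g_0,\omega)$ is the min-max value of $a$ over $b_\omega$ for $(k{+}1)$-dimensional subspaces as in \eqref{eq:GenEigs}, and note that $\omega_-\|u\|_{L^2}^2\le b_\omega(u,u)\le\omega_+\|u\|_{L^2}^2$ uniformly over $\mathcal A$. Let $\omega_n\rightharpoonup^*\omega$ with all weights in $\mathcal A$. The inequality $\limsup_n\lambda_k(M,g_0,\omega_n)\le\lambda_k(M,g_0,\omega)$ is easy: use the span $E$ of $b_\omega$-orthonormal eigenfunctions for $\lambda_0(\omega),\dots,\lambda_k(\omega)$, a subspace of dimension $k+1$, as a competitor for each $\omega_n$, noting that on the finite-dimensional $E$ the Rayleigh quotients $\psi\mapsto a(\psi,\psi)/b_{\omega_n}(\psi,\psi)$ converge uniformly on the $H^1$-unit sphere, because $\psi\mapsto b_{\omega_n}(\psi,\psi)$ is equi-Lipschitz on bounded subsets of the finite-dimensional $E$ and converges pointwise (each $\psi^2\in L^1(M)$ since $H^1(M)\hookrightarrow L^2(M)$). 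The reverse inequality is the delicate step. Let $\psi^n_1,\dots,\psi^n_{k+1}$ be $b_{\omega_n}$-orthonormal eigenfunctions for $\lambda_0(\omega_n),\dots,\lambda_k(\omega_n)$; the $\limsup$ bound already shows $\{\psi^n_i\}$ is bounded in $H^1(M)$, so Rellich--Kondrachov yields a subsequence with $\psi^n_i\rightharpoonup\psi_i$ weakly in $H^1$ and strongly in $L^2$. Strong $L^2$ convergence together with $\omega_n\rightharpoonup^*\omega$ gives $b_{\omega_n}(\psi^n_i,\psi^n_j)\to b_\omega(\psi_i,\psi_j)$, so the $\psi_i$ are $b_\omega$-orthonormal and span a $(k{+}1)$-dimensional $F\subset H^1(M)$; for $\psi=\sum_i c_i\psi_i\in F$, weak lower semicontinuity of $a$ and the relations $a(\psi^n_i,\psi^n_j)=\lambda_{i-1}(\omega_n)\delta_{ij}$ give $a(\psi,\psi)\le\liminf_n\sum_i c_i^2\lambda_{i-1}(\omega_n)\le\big(\liminf_n\lambda_k(\omega_n)\big)\,b_\omega(\psi,\psi)$, whence $\lambda_k(M,g_0,\omega)\le\max_{0\ne\psi\in F}a(\psi,\psi)/b_\omega(\psi,\psi)\le\liminf_n\lambda_k(M,g_0,\omega_n)$; a routine subsequence argument upgrades this to convergence of the full sequence. \emph{This reverse inequality — resting on the compact Sobolev embedding and the bookkeeping with possibly-multiple eigenvalues — is the step I expect to be the main obstacle.}

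Granting the lemma, existence follows by the direct method: $\mathcal A$ is bounded, convex and norm-closed in $L^\infty(M)=L^1(M)^*$, hence weak-$*$ closed (test against nonnegative functions in $L^1(M)$ to preserve $\omega_-\le\omega\le\omega_+$ in the limit) and, as $L^1(M)$ is separable, weak-$*$ sequentially compact. Take a maximizing sequence $\omega_n$ for $\Lambda^\star_k(M,g_0,\omega_-,\omega_+)$, extract $\omega_n\rightharpoonup^*\omega^\star\in\mathcal A$, and apply the lemma and $\int_M\omega_n\,d\mu_{g_0}\to\int_M\omega^\star\,d\mu_{g_0}$ (weak-$*$ convergence against $1\in L^1(M)$) to conclude $\Lambda_k(M,g_0,\omega^\star)=\lim_n\Lambda_k(M,g_0,\omega_n)=\Lambda^\star_k(M,g_0,\omega_-,\omega_+)$.

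For the approximation, the upper bound $\Lambda^\star_k(M,g_0,\omega_-,\omega_+)\le\Lambda^c_k(M,[g_0])$ will hold for every $0<\omega_-<\omega_+$. Given $\omega\in\mathcal A$, set $\omega_t:=e^{t\Delta_{M,g_0}}\omega$; since the heat semigroup is positivity-preserving, fixes constants and is self-adjoint on $L^2(M,d\mu_{g_0})$, the function $\omega_t$ is smooth, satisfies $\omega_-\le\omega_t\le\omega_+$ and $\int_M\omega_t\,d\mu_{g_0}=\int_M\omega\,d\mu_{g_0}$, and $\omega_t\to\omega$ in $L^2(M)$, hence weak-$*$, as $t\downarrow0$. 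The lemma gives $\lambda_k(M,g_0,\omega_t)\to\lambda_k(M,g_0,\omega)$; since $\omega_t$ is smooth, $\lambda_k(M,g_0,\omega_t)=\lambda_k(M,\omega_tg_0)$ and $\Lambda_k(M,g_0,\omega_t)=\Lambda_k(M,\omega_tg_0)\le\Lambda^c_k(M,[g_0])$, so letting $t\downarrow0$ and then taking the supremum over $\omega\in\mathcal A$ proves the upper bound. For the lower bound, fix $\epsilon>0$ and choose, from the definition of $\Lambda^c_k$ as a supremum over $[g_0]$, a smooth positive $\tilde\omega$ on $M$ with $\Lambda_k(M,\tilde\omega g_0)\ge\Lambda^c_k(M,[g_0])-\epsilon$; then $\tilde\omega$ is bounded between the positive constants $\omega_-(\epsilon):=\tfrac12\min_M\tilde\omega$ and $\omega_+(\epsilon):=2\max_M\tilde\omega$, so $\tilde\omega\in\mathcal A(M,g_0,\omega_-(\epsilon),\omega_+(\epsilon))$ and therefore $\Lambda^\star_k(M,g_0,\omega_-(\epsilon),\omega_+(\epsilon))\ge\Lambda_k(M,g_0,\tilde\omega)=\Lambda_k(M,\tilde\omega g_0)\ge\Lambda^c_k(M,[g_0])-\epsilon$, which together with the upper bound finishes the argument.
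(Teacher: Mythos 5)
Your proof is correct and follows essentially the same route as the paper: the direct method using weak-$*$ sequential compactness of $\mathcal A(M,g_0,\omega_-,\omega_+)$ and weak-$*$ continuity of $\omega\mapsto\Lambda_k(M,g_0,\omega)$, a smooth-approximation argument for the upper bound $\Lambda^\star_k\leq\Lambda^c_k(M,[g_0])$, and constants $\omega_\pm(\epsilon)$ chosen to bound a near-optimal smooth conformal factor for the lower bound. The only substantive difference is self-containedness: you actually prove the weak-$*$ continuity lemma (which the paper cites from the Cox--McLaughlin/Henrot literature) via Rellich--Kondrachov and the min-max characterization, and you implement the smoothing concretely with the heat semigroup, which conveniently preserves the pointwise bounds and the volume.
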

Our proof of Proposition \ref{prop:Existence}, which we postpone to \S\ref{sec:ExistenceProof}, uses  the direct method in the calculus of variations. As discussed further in \S\ref{sec:rework}, similar results are given in \cite{Nadirashvili2010,Petrides2013,Kokarev2011} and considerably more regularity can be shown for a metric attaining the first conformal eigenvalue.  Our strategy is thus to approximate the solution to \eqref{eq:ConEig} by computing the solution to  \eqref{eq:opt:Linfty}  for a sequence of values $\omega_+$ and $\omega_-$ such that $\omega_+ \uparrow \infty$ and $\omega_- \downarrow 0$. The bound in Proposition \ref{prop:Existence}  justifies this strategy.
 Similarly, we approximate \eqref{eq:TopEig}, the topological spectrum for genus $\gamma$, by 
\begin{equation}\label{eq:opt:LinftyTop}
\sup \{ \Lambda_k (M, g_0,\omega)\colon M \in \mathcal M(\gamma), \ g_0 \in G(M), \text{ and }  \omega \in \mathcal A(M,g_0,\omega_-, \omega_+) \}.
\end{equation}

For a given closed Riemannian surface $(M,g_0)$ and constants $k\geq1$ and $\omega_+> \omega_->0$, we develop a computational method for seeking the conformal factor $\omega \in \mathcal A(M,g_0,\omega_-, \omega_+)$ which attains the supremum in \eqref{eq:opt:Linfty}. To achieve this aim, we evolve $\omega$ within $\mathcal A(M,g_0,\omega_-, \omega_+)$ to increase $\Lambda_k(M,g_0,\omega)$. If $\omega$ were assumed smooth, this would be equivalent to evolving a metric $g$  within its conformal class, $[g_0]$ to increase $\Lambda_k(M,g)$.  We also develop a computational method for approximating the topological spectrum  for genus $\gamma=0$ and $\gamma=1$ via \eqref{eq:opt:LinftyTop}. The method depends on an explicit parameterization of moduli space, and in principle could be extended to higher genus \cite{Imayoshi1992,Buser2010}. 

Our computations support a conjecture of N. Nadirashvili  \cite{Nadirashvili2002}
that $\Lambda^t_k(0) = 8 \pi k$, attained by a sequence of surfaces degenerating to a union of $k$ identical round spheres (see \S\ref{sec:genus0}). 
 That is, for dimension $n=2$, and a genus $\gamma=0$ surface, the inequality, 
 $\Lambda^t_k(0) \geq 8 \pi k$,
   of \cite[Corollary 1]{Colbois2003} is tight. 
Based on our computations, we  further conjecture that $\Lambda^t_k(1) = \frac{8\pi^2}{\sqrt{3}} + 8\pi (k-1)$, attained by a sequence of surfaces degenerating into a union of an equilateral flat torus and $k-1$ identical round spheres (see \S\ref{sec:TopSpecOne}).  This surface was also recently studied by Karpukhin \cite{karpukhin2013b}.  
As a comparison, we show that among flat tori, $\Lambda_k$  has a local maximum with value  
$\Lambda_k = 4\pi^2 \left\lceil \frac{k}{2} \right\rceil^2 \left( \left\lceil \frac{k}{2} \right\rceil^2  - \frac{1}{4}\right)^{-\frac{1}{2}}$. 
We conjecture that this is the global maximum among flat tori. 
A detailed study of the first non-trivial conformal eigenvalue of flat tori is also conducted in \S\ref{sec:FundConfEig}.

\paragraph{Outline.} 
In \S\ref{sec:background}, we provide some background material and review related work. This includes a discussion of properties of the Laplace-Beltrami  eigenproblem and its solution, 
a brief discussion of moduli spaces, 
variations of eigenvalues with respect to the conformal structure,  and 
the spectrum of the disconnected union of a surface and a sphere. 
We also provide a proof of  Proposition \ref{prop:Existence}.
In \S\ref{sec:SphereandTori}, we discuss the Laplace-Beltrami eigenproblem on a sphere and flat tori, which are central to later sections. 
In \S\ref{sec:compMeth}, we describe our computational methods. 
In \S\ref{sec:compRes}, we compute the conformal spectrum of several Riemannian surfaces and the topological spectrum for genus $\gamma=0$ and $\gamma=1$ surfaces.
We conclude in \S\ref{sec:Disc} with a discussion.

\section{Background and related work} \label{sec:background} 
Let $(M,g)$ be a connected, closed,  smooth Riemannian manifold of dimension $n \geq 2$.  
The first fundamental form on $M$ can be written (using Einstein notation) in local coordinates as $g = g_{ij} dx^i dx^j$, where $g_{ij} = g(\partial_{x^i},\partial_{x^j})$. 
 Let $d\mu_g$  denote the measure on $(M,g)$ induced by the Riemannian metric. 
Let $\langle \cdot, \cdot\rangle_g$ denote the $L^2$-inner product on $(M,g)$ and denote $\| f \|_g = \langle f, f\rangle_g^{\frac12}$. 
In local coordinates the divergence and gradient are written 
$ (\nabla f)^i = \partial^i f = g^{ij} \partial_j f $ and 
$\displaystyle \text{div} X = \frac{1}{\sqrt{|g|}} \partial_i \sqrt{|g|} X^i $.
Here $g^{ij}$ is the inverse of the metric tensor $g = g_{ij}$ and $|\cdot|$ is the determinant. 
The Laplace-Beltrami operator, $\Delta_{M,g} \colon C^\infty(M) \to C^\infty(M)$ is written in local coordinates
\begin{equation}
\label{eq:LapBel}
\Delta_{M,g} f = \text{div}\nabla f =  \frac{1}{\sqrt{|g|}} \partial_i \sqrt{|g|} g^{ij} \partial_j f. 
\end{equation}
Denote the spectrum of $-\Delta_{M,g} $ by $\sigma (M,g)$. 
 For a general introduction to properties of $\Delta_{M,g}$ and $\sigma (M,g)$, we refer to \cite{Chavel1984,Berard1985}.

\paragraph{ Properties of $\Delta_{M,g}$ and $\sigma(M,g)$.} 
\begin{enumerate}
\item The eigenvalues $\lambda_{k}(M,g)$  are characterized by the Courant-Fischer formulation
\begin{equation}
\label{eq:CF}
\lambda_{k-1}(M,g)= \min_{\begin{array}{c}
E_{k}\subset H^{1}(M)\\
\text{subspace of dim }k
\end{array}}\max_{\psi\in E_{k},\psi\neq0}\frac{\int_{M}| \nabla \ \psi|^{2} d\mu_g }{\int_{M}\psi^{2} d\mu_g}, 
\end{equation}
where $E_{k}$ is in general a $k$-dimensional subspace of $H^{1}(M)$ and at the minimizer, $E_{k}=\text{span}(\{\psi_{j}(\cdot ;M,g)\}_{j=1}^{k})$.

\item For fixed $(M,g)$, $\lambda_k(M,g)\uparrow \infty$ as $k\uparrow \infty$ and each eigenspace is finite dimensional.  We have $\lambda_0=0$ and the corresponding eigenspace is one dimensional and spanned by the constant function.  Eigenspaces belonging to distinct eigenvalues are orthogonal in $L^2(M)$ and $L^2(M)$ is spanned by the eigenspaces. Every eigenfunction is $C^\infty$ on $M$. 

\item (dilation property) For $(M,g)$ fixed, the quantity $\lambda_k(M,g) \ \vol(M,g)^{\frac{2}{n}}$, where $n$ is the dimension, is invariant to dilations of the metric $g$. That is, for any $\alpha \in \mathbb R_+$, 
$$
\lambda_k(M,\alpha g) \ \vol(M,\alpha g)^{\frac{2}{n}} = 
\lambda_k(M, g) \ \vol(M, g)^{\frac{2}{n}}. 
$$
Since $ \vol(M,\alpha g) = \alpha^{\frac{n}{2}} \vol(M, g)$, this is equivalent to 
$\lambda_k(M,\alpha g) = \alpha^{-1} \lambda_k(M, g) $. For surfaces ($n=2$), $\Lambda_k(M,g) = \lambda_k(M,g)\ \vol(M,g)$ is invariant to dilations of the metric $g$. 

\item (Spectrum of disconnected manifolds) If $(M,g)$ is a disconnected manifold, $M = M_1 \cup M_2$, then $\sigma(M,g) = \sigma(M_1,g) \cup \sigma(M_2,g)$.

\item (Weyl's Law) Let $N(\lambda) := \# \{ \lambda_k(M,g)\colon \lambda_k(M,g) \leq \lambda\}$, counted with multiplicity. Then 
$$
N(\lambda) \sim \frac{ \omega_n  \vol(M,g)  }{(2\pi)^n} \lambda^{n/2} \qquad \text{as } \lambda \uparrow \infty,
$$
where  $\omega_n = \frac{ \pi^{\frac{n}{2}}}{ \Gamma(\frac{n}{2}+1)}$ 
is the volume of the unit ball in $\mathbb R^n$. In particular, 
$$
\lambda_k \sim  \frac{(2 \pi)^2  }{\omega_n^{\frac{2}{n}}  \vol(M,g)^{\frac{2}{n}}} \ k^{\frac{2}{n}} \qquad \text{as } k \uparrow \infty.
$$

\end{enumerate}

\subsection{Related work}  \label{sec:rework}
We briefly summarize some related work. A recent review was given by Penskoi \cite{penskoi2013c}. 

Although eigenvalue optimization problems were already proposed by Lord Rayleigh in the late 1870s \cite{Rayleigh1877} (see also the surveys \cite{Henrot:2006fk,AsBe2007}), eigenvalue optimization problems posed on more general surfaces  were not studied until the 1970s.  The first result in this direction is due to J. Hersch, who showed that 
$$
\Lambda^t_1(0) =  \Lambda_1(\mathbb S^2, g_0) = 8\pi \approx 25.13,
$$
attained only by the standard metric  (up to isometry) on $\mathbb S^2$  \cite{hersch1970} (see also \cite[p.94]{Chavel1984} or \cite[Chapter III]{Schoen:1994}).  
P.~C.~Yang and S.-T.~Yau generalized this result  in \cite{Yang1980}, proving 
$$
\Lambda^t_1(\gamma) \leq 8\pi (1+ \gamma). 
$$
In \cite{Korevaar1992}, N. Korevaar generalized this result to larger eigenvalues, showing there exists a constant $C$, such that 
$$
\Lambda^t_k(\gamma)  \leq C \  (1+\gamma)  \ k. 
$$
This result shows that the topological spectrum is finite and since $\Lambda^c_k(M,[g_0]) \leq \Lambda^t_k(\gamma)$ for any $M\in \mathcal M(\gamma)$ and $g_0 \in G(M)$, that  conformal eigenvalues are finite as well. In \cite{Nadirashvili1996}, N. Nadirashvili proved that  
$$
\Lambda^t_1(1) =  \Lambda_1(\mathbb T^2, g_0) = \frac{8\pi^2}{\sqrt{3}} \approx 45.58,  
$$
attained only by the flat metric on the equilateral torus (generated by $(1,0)$ and $(\frac{1}{2}, \frac{\sqrt{3}}{2}$), see \S\ref{sec:FlatTori}). Indeed, it was already known to M. Berger that the maximum of $\Lambda_1$ over all flat tori is attained only by the equilateral torus \cite{Berger1973}. For $k=2$, N. Nadirashvili  showed that 
$$
\Lambda^t_2(0) = 16\pi \approx 50.26,
$$
attained by a sequence of surfaces degenerating to a union of two identical round spheres \cite{Nadirashvili2002}.  Nadirashvili also conjectured that  $\Lambda^t_k(0) = 8 \pi k$, attained by a sequence of surfaces degenerating to a union of $k$ identical round spheres. In \cite{Jakobson2005}, the first eigenvalue of genus $\gamma=2$ surfaces are studied both analytically and computationally and it is conjectured that 
\begin{equation}
\label{eq:lam1genus2}
\Lambda^t_1(2) = 16 \pi \approx 50.26,
\end{equation} 
 attained by a Bolza surface,  a singular surface which is realized as a double branched covering of the sphere. 

We next state several relevant results\footnote{We state the 2-dimensional results here for simplicity, but several of these results are proven for general dimension.} of B. Colbois and A. El Soufi \cite{Colbois2003}, from whom we have also adopted notation for the present work. It is shown that for any Riemannian surface $(M,g)$ and any integer $k\geq 0$, 
$\Lambda^c_k(M,[g]) \geq \Lambda^t_k(0)$. 
Furthermore, for all $k$, 
\begin{equation} \label{eq:SpectralGap}
\Lambda^c_{k+1}(M,[g]) - \Lambda^c_k(M,[g]) \geq \Lambda^t_1(0) = 8\pi 
\end{equation}
which implies that
$\Lambda^c_k(M,[g]) \geq 8 \pi k$. 
This implies that 
\begin{equation}
\label{eq:topGap}
\Lambda^t_k(\gamma) \geq \Lambda^t_\ell(\gamma) + 8 \pi (k-\ell),  \qquad  \text{for } k \geq \ell \geq 0.
\end{equation}
Intuitively, \eqref{eq:topGap} states that the $k$-th topological eigenvalue must be at least as large as the eigenvalue associated with the surface constructed by gluing $k-\ell$ balls of the appropriate volume to the  surface which maximizes the $\ell$-th eigenvalue; see \S\ref{sec:disconnectedUnion}. Taking $\ell=0$, \eqref{eq:topGap} gives  
$$
\Lambda^t_k(\gamma)  \geq 8 \pi k.
$$ 
Finally, for any fixed integer $k\geq0$, the function $\gamma \mapsto \Lambda^t_k (\gamma)$ is non-decreasing. 

Recently it has been  shown (independently by several authors) that the supremum in \eqref{eq:ConEig} for the first conformal eigenvalue, $\Lambda^c_1(M,[g_0])$, is attained by an extremal metric, $g^\star \in [g_0]$, and several results on the regularity of $g^\star$ have been proven \cite{Nadirashvili2010,Petrides2013,Kokarev2011}. In particular, $g^\star$ is smooth and positive, up to a finite  set of some conical singularities on $M$. G. Kokarev also studies the existence and regularity of higher conformal eigenvalues $\Lambda^c_k(M,[g_0])$  \cite{Kokarev2011}.

\bigskip

Closely related to conformal and topological spectra  is the study of extremal metrics on closed surfaces, on which there has recently been significant development 
\cite{Jakobson2003,Lapointe2008,Penskoi2012,penskoi2013b,penskoi2013,karpukhin2013b,karpukhin2014}. A Riemannian metric $g$ on a closed surface $M$ is said to be an \emph{extremal metric} for $\Lambda_k(M, g)$ if for any analytic deformation $g_t$ such that $g_0 = g$ the following inequality holds:
$$
\frac{d}{dt} \Lambda_k(M,g_t)\Big|_{t\downarrow0} \leq 0 \leq \frac{d}{dt} \Lambda_k(M,g_t)\Big|_{t\uparrow0}.
$$
Recently, M. Karpukhin \cite{karpukhin2013b} investigated a number of extremal metrics studied in \cite{Penskoi2012,penskoi2013,karpukhin2014,Lapointe2008} and showed, by direct comparison with the equilateral torus glued to kissing spheres,  that none are maximal. This is precisely the configuration which, based on numerical evidence, is conjectured to be maximal in the present paper.

\bigskip

For dimension $n\geq 3$, the topological spectrum does not  exist. Indeed, H.~Urakawa \cite{Urakawa1979} found a sequence of Riemannian metrics, $\{g_n\}_n$, of volume
one on the sphere $\mathbb S^3$ such that $\lambda_1(\mathbb S^3, g_n)  \rightarrow  \infty$. B.~Colbois and J. Dodziuk  showed that every compact manifold, $M$, with dimension $n\geq 3$ admits a unit-volume metric $g$ with arbitrarily large first eigenvalue, $\lambda_1(M,g)$ \cite{Colbois1994}. 

In \cite{Friedland1979}, S. Friedland studies the problem of finding a metric with $L^\infty$ constraints within its conformal class to \emph{minimize} (increasing) functions of the Laplace-Beltrami eigenvalues. For the sphere, $\mathbb S^2$, he shows that the infimum is attained at a metric which is bang-bang, {\it i.e.}, activates the pointwise constraints almost everywhere. Note that these results do \emph{not} shed light on the   \emph{maximization problem}, \eqref{eq:opt:Linfty}; we do not expect  a conformal factor achieving the supremum in \eqref{eq:opt:Linfty} to be bang-bang.

There are also a number of other types of bounds for eigenvalues on Riemannian manifolds. In particular, there are a number of both upper and lower bounds for Laplace-Beltrami eigenvalues of manifolds with positive Ricci curvature (see, for example, \cite[Ch. III]{Chavel1984}, \cite{Kroger1992}, and \cite{Ling2010}).   \cite{Girouard2009,Petrides2012} give upper bounds on the second eigenvalue of $n$-dimensional spheres for conformally round metrics. \cite{pacard2009,Colbois2010,Colbois2010b} study isoperimetric problems for Laplace-Beltrami eigenvalues of compact submanifolds. 

\subsection{A  brief discussion of moduli spaces} \label{sec:moduliSp}

Given two oriented, 2-dimensional Riemannian manifolds, $(M_1,g_1)$ and $(M_2,g_2)$, a \emph{conformal mapping} is an orientation-preserving diffeomorphism $h\colon M_1 \to M_2$ such that $h^*(g_2) = \omega g_1$ where $\omega$ is a real-valued positive smooth function on $M_1$. We say that $(M_1,g_1)$ and $(M_2,g_2)$ are conformally equivalent (or have the same complex structure if one identifies the induced Riemann surface) if there exists a conformal mapping between them. The moduli space of genus $\gamma$, $\mathfrak{M}_{\gamma}$, is the set of all conformal equivalence classes of closed Riemannian surfaces of genus $\gamma$. Roughly speaking, the moduli space parameterizes the conformal classes of metrics for a given genus.

Here, we introduce some very basic results from moduli theory for genus $\gamma=0$ and $\gamma=1$ surfaces.  By the Uniformization Theorem, every closed Riemann surface of genus $\gamma=0$ is conformally equivalent to the Riemann sphere, so the moduli space consists of a single point \cite{Imayoshi1992}. 

Every genus $\gamma=1$ Riemann surface is conformally equivalent to a Riemann surface $\mathbb C / \Gamma_\tau$ where, for given $\tau \in H$, $\Gamma_\tau =  \{m+n \tau \colon  m,n \in \mathbb Z \}$ is a lattice group on $\mathbb C$.
Here $H = \{ \tau \in \mathbb C\colon  \Im \tau > 0\}$ denotes the upper half plane. 
\begin{thrm} \cite[Theorem 1.1]{Imayoshi1992}  For any two points $\tau$ and $\tau'$ in the upper half-plane, the two tori $\mathbb C / \Gamma_\tau$ and $\mathbb C/\Gamma_{\tau'}$ are conformally equivalent if and only if 
$$
\tau' \in PSL(2,\mathbb Z)\tau := \left\{\frac{a\tau + b}{ c \tau + d}\colon a,b,c,d\in \mathbb Z \text{ and } ad-bc =1 \right\} 
$$
where $PSL(2,\mathbb Z)$ denotes the projective special linear group of degree two over the ring of integers.
\end{thrm}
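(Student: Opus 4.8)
The plan is to prove the two implications of the ``if and only if'' separately: the forward implication is a change-of-basis computation, while the converse rests on a rigidity statement for holomorphic self-maps of flat tori.

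\emph{Sufficiency.} Suppose $\tau' = \frac{a\tau+b}{c\tau+d}$ with $a,b,c,d\in\mathbb{Z}$ and $ad-bc=1$. I would first note $c\tau+d\neq 0$ since $\tau\in H$ is non-real. Because the integer matrix $\left(\begin{smallmatrix} a & b \\ c & d\end{smallmatrix}\right)$ is unimodular, $\{c\tau+d,\,a\tau+b\}$ is another $\mathbb{Z}$-basis of $\Gamma_\tau=\mathbb{Z}\langle 1,\tau\rangle$. Scaling the basis $\{1,\tau'\}$ of $\Gamma_{\tau'}$ by $c\tau+d$ produces exactly $\{c\tau+d,\,a\tau+b\}$, so $(c\tau+d)\,\Gamma_{\tau'}=\Gamma_\tau$. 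Multiplication by the nonzero constant $c\tau+d$ is a biholomorphism of $\mathbb{C}$ carrying $\Gamma_{\tau'}$ onto $\Gamma_\tau$, hence descends to a biholomorphism $\mathbb{C}/\Gamma_{\tau'}\to\mathbb{C}/\Gamma_\tau$; since any nonconstant holomorphic map of Riemann surfaces is orientation- and angle-preserving, this is a conformal mapping in the sense defined above.

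\emph{Necessity.} Now suppose $h\colon\mathbb{C}/\Gamma_{\tau'}\to\mathbb{C}/\Gamma_\tau$ is a conformal mapping; being an orientation-preserving diffeomorphism that pulls back the conformal structure, $h$ is a biholomorphism. Since $\mathbb{C}$ is simply connected, the composition of $h$ with the quotient map $\mathbb{C}\to\mathbb{C}/\Gamma_{\tau'}$ lifts to a holomorphic map $\tilde h\colon\mathbb{C}\to\mathbb{C}$ covering $h$. For each $\gamma\in\Gamma_{\tau'}$, the continuous function $z\mapsto\tilde h(z+\gamma)-\tilde h(z)$ takes values in the discrete set $\Gamma_\tau$, hence equals a constant $\rho(\gamma)\in\Gamma_\tau$. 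Differentiating $\tilde h(z+\gamma)=\tilde h(z)+\rho(\gamma)$ shows $\tilde h'$ is $\Gamma_{\tau'}$-periodic, so it descends to a holomorphic function on the compact torus $\mathbb{C}/\Gamma_{\tau'}$ and is therefore constant; thus $\tilde h(z)=\alpha z+\beta$ with $\alpha\neq 0$ (as $h$ is bijective). This rigidity step, which uses compactness of the torus in an essential way, is the part I expect to be the crux of the argument.

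\emph{Extracting the modular relation.} From $\tilde h(z)=\alpha z+\beta$ we obtain $\alpha\gamma=\rho(\gamma)\in\Gamma_\tau$ for every $\gamma\in\Gamma_{\tau'}$, i.e. $\alpha\Gamma_{\tau'}\subseteq\Gamma_\tau$; applying the same reasoning to $h^{-1}$ gives equality, $\alpha\Gamma_{\tau'}=\Gamma_\tau$. Expanding $\alpha\cdot 1$ and $\alpha\cdot\tau'$ in the basis $\{1,\tau\}$ produces integers $p,q,r,s$ with $\alpha=p+q\tau$ and $\alpha\tau'=r+s\tau$; since these relate two $\mathbb{Z}$-bases of the same lattice, $\left(\begin{smallmatrix} s & r\\ q & p\end{smallmatrix}\right)\in GL(2,\mathbb{Z})$, so $sp-rq=\pm1$. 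Dividing gives $\tau'=\frac{s\tau+r}{q\tau+p}$, and the identity $\Im\tau'=(sp-rq)\,\Im\tau/|q\tau+p|^2$ combined with $\tau,\tau'\in H$ forces $sp-rq=+1$. Hence $\left(\begin{smallmatrix} s & r\\ q & p\end{smallmatrix}\right)\in SL(2,\mathbb{Z})$ and $\tau'\in PSL(2,\mathbb{Z})\tau$, completing the plan.
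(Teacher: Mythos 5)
Your proof is correct. Note that the paper does not prove this statement at all; it is quoted with a citation to Theorem~1.1 of Imayoshi--Taniguchi, so there is no in-paper argument to compare against, and your two-step argument (unimodular change of lattice basis plus multiplication by $c\tau+d$ for sufficiency; lifting the biholomorphism to an affine map $z\mapsto\alpha z+\beta$ via periodicity of $\tilde h'$ and compactness, then comparing lattices and fixing the sign of the determinant from $\Im\tau,\Im\tau'>0$) is precisely the standard proof found in that reference. The only spot deserving one more line is the upgrade of $\alpha\Gamma_{\tau'}\subseteq\Gamma_\tau$ to equality: ``applying the same reasoning to $h^{-1}$'' yields a multiplier $\alpha'$ with $\alpha'\Gamma_\tau\subseteq\Gamma_{\tau'}$, and you should either observe that the lift of $h^{-1}\circ h$ is a deck transformation, forcing $\alpha'\alpha=1$, or argue directly from injectivity of $h$ (an element $\lambda\in\Gamma_\tau\setminus\alpha\Gamma_{\tau'}$ would produce two distinct points of $\mathbb{C}/\Gamma_{\tau'}$ with the same image); either remark closes the gap immediately.
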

Thus, the moduli space for genus $\gamma=1$, can be represented as the quotient space $H /  PSL(2,\mathbb Z)$ and the fundamental domain is the green shaded area in Figure  \ref{fig:coordDefs}(right). 
The moduli spaces for surfaces with genus $\gamma\geq 2$ have been studied in great detail (see, for example, \cite{Imayoshi1992}). However, a computationally tractable parameterization for general $\mathfrak{M}_{\gamma}$ is non-trivial. 

To find the topological spectrum \eqref{eq:TopEig} in practice, we use the moduli space to parameterize the conformal classes of metrics $[g_0]$. In the following section we discuss how the conformal factor $\omega$ is varied within each conformal class.

\subsection{Variations of Laplace-Beltrami eigenvalues within the conformal class }
\label{sec:VarEigs}

In this section, we compute the variation of a simple Laplace-Beltrami eigenvalue within the conformal class. General variations of a Laplace-Beltrami eigenvalue with respect to the conformal factor are discussed in  \cite{Soufi2008}. In this work, we only require the variation of a simple eigenvalue. 

Let $(M,g)$ be a fixed Riemannian manifold and consider the conformal class, consisting of metrics $\omega g$, where $\omega$ is a smooth, positive-valued function on $M$. 
Using \eqref{eq:LapBel}, 
the Laplace-Beltrami operator on $(M,\omega g)$ is expressed as
\begin{equation}
\label{eq:LapBelomega}
\Delta_{M,\omega g} f = \frac{1}{\omega^{n/2} \sqrt{|g|}} \partial_i  \left( \omega^{\frac{n}{2}-1}\sqrt{|g|} g^{ij} \partial_j f \right). 
\end{equation}

\begin{prpstn} \label{prop:Hadamard}
Let $(\lambda, \psi)$ be a simple eigenpair of $-\Delta_{M,\omega g}$. The variation of $\lambda$ with respect to a perturbation of the conformal function $\omega$ is given by 
\begin{equation}
\label{eq:Hadamard}
\left\langle  \frac{\delta \lambda}{\delta \omega}, \ \delta \omega \right\rangle_{\omega g} = 
\frac{1}{\langle\psi, \psi\rangle_{\omega g}}
\left\langle
- \frac{n}{2} \lambda \omega^{-1} \psi^2 + \frac{n-2}{2} \omega^{-1} \| \nabla_{\omega g} \psi \|^2_{\omega g} 
\ , \delta \omega
\right\rangle_{\omega g}
\end{equation}
In particular, for $n=2$, 
\begin{equation}
\label{eq:Hadamard2}
\left\langle  \frac{\delta \lambda}{\delta \omega}, \ \delta \omega \right\rangle_{\omega g} = 
- \lambda \frac{  \left\langle \omega^{-1} \psi^2
\ , \delta \omega 
\right\rangle_{\omega g} } 
{\langle\psi, \psi\rangle_{\omega g}}
= - \lambda \frac{  \left\langle  \psi^2
\ , \delta \omega 
\right\rangle_{ g} } 
{\langle\omega \psi, \psi\rangle_{ g}}
\end{equation}
\end{prpstn}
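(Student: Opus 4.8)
The plan is to compute the first variation of $\lambda$ by differentiating the eigenvalue equation $-\Delta_{M,\omega_t g}\psi_t = \lambda_t \psi_t$ along a one-parameter family of conformal factors $\omega_t = \omega + t\,\delta\omega$, using the standard Hadamard-type perturbation argument that is available precisely because $\lambda$ is assumed simple. First I would recall that, since the eigenvalue is simple, the eigenpair $(\lambda_t,\psi_t)$ depends analytically (or at least $C^1$) on $t$, so that $\dot\lambda := \frac{d}{dt}\lambda_t\big|_{t=0}$ and $\dot\psi$ are well defined; I would normalize $\psi_t$ by $\langle \psi_t,\psi_t\rangle_{\omega_t g} = 1$, or equivalently keep track of the normalization factor and divide at the end, which is what produces the $\langle\psi,\psi\rangle_{\omega g}^{-1}$ prefactor in \eqref{eq:Hadamard}.

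The key computation is to differentiate $-\Delta_{M,\omega_t g}\psi_t = \lambda_t\psi_t$ in $t$ at $t=0$, obtaining $-(\partial_t \Delta_{M,\omega_t g})\big|_{0}\psi - \Delta_{M,\omega g}\dot\psi = \dot\lambda \psi + \lambda \dot\psi$, then pairing with $\psi$ in the $L^2(\omega g)$ inner product. The term $\langle \Delta_{M,\omega g}\dot\psi,\psi\rangle_{\omega g} = \langle \dot\psi, \Delta_{M,\omega g}\psi\rangle_{\omega g} = -\lambda\langle\dot\psi,\psi\rangle_{\omega g}$ by self-adjointness, which cancels the $\lambda\dot\psi$ contribution, leaving $\dot\lambda\langle\psi,\psi\rangle_{\omega g} = -\langle (\partial_t\Delta_{M,\omega_t g})\big|_0\psi,\psi\rangle_{\omega g}$. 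It then remains to compute $\partial_t\Delta_{M,\omega_t g}\big|_0$ from the explicit local expression \eqref{eq:LapBelomega}: differentiating $\Delta_{M,\omega g}f = \omega^{-n/2}|g|^{-1/2}\partial_i(\omega^{n/2-1}|g|^{1/2}g^{ij}\partial_j f)$ with respect to $\omega$ gives a sum of two pieces, one from the outer $\omega^{-n/2}$ factor producing $-\tfrac{n}{2}\omega^{-1}\delta\omega\,\Delta_{M,\omega g}f$, and one from the inner $\omega^{n/2-1}$ factor producing $\omega^{-n/2}|g|^{-1/2}\partial_i((\tfrac{n}{2}-1)\omega^{n/2-1}\omega^{-1}\delta\omega\,|g|^{1/2}g^{ij}\partial_j f)$. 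Pairing the first piece with $\psi$ and using $-\Delta_{M,\omega g}\psi = \lambda\psi$ yields the $-\tfrac{n}{2}\lambda\omega^{-1}\psi^2$ term; pairing the second piece with $\psi$ and integrating by parts once (using that $\psi$ is smooth and $M$ is closed, so no boundary terms) converts it into $\frac{n-2}{2}\omega^{-1}\|\nabla_{\omega g}\psi\|^2_{\omega g}$ paired against $\delta\omega$, where I must be careful that the gradient-squared and the volume measure are both taken with respect to $\omega g$ so that the metric-dependent factors $\omega^{n/2-1}$ and $\omega^{n/2}$ combine correctly. This establishes \eqref{eq:Hadamard}.

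For \eqref{eq:Hadamard2} one simply sets $n=2$, which kills the $\frac{n-2}{2}$ term entirely and leaves $\dot\lambda\langle\psi,\psi\rangle_{\omega g} = -\lambda\langle\omega^{-1}\psi^2,\delta\omega\rangle_{\omega g}$; the second equality in \eqref{eq:Hadamard2} is then just bookkeeping of the conformal weights, using $d\mu_{\omega g} = \omega^{n/2}d\mu_g = \omega\, d\mu_g$ in dimension two to rewrite $\langle \omega^{-1}\psi^2,\delta\omega\rangle_{\omega g} = \int_M \omega^{-1}\psi^2\,\delta\omega\,\omega\,d\mu_g = \langle\psi^2,\delta\omega\rangle_g$ and similarly $\langle\psi,\psi\rangle_{\omega g} = \langle\omega\psi,\psi\rangle_g$. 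I expect the main obstacle to be purely organizational rather than conceptual: namely keeping the conformal weight factors ($\omega^{n/2}$ in the volume form, $\omega^{-1}$ in the inverse metric, and the $\omega^{n/2-1}$ appearing inside the divergence in \eqref{eq:LapBelomega}) consistently attached to the correct objects through the integration by parts, so that the claimed clean cancellation to $\frac{n-2}{2}\omega^{-1}\|\nabla_{\omega g}\psi\|^2_{\omega g}$ actually materializes; a secondary point worth stating carefully is the justification of differentiability of the simple eigenpair in $t$, which I would cite from standard perturbation theory (e.g. Kato) or from \cite{Soufi2008}.
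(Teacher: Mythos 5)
Your proposal is correct and follows essentially the same route as the paper: vary the eigenvalue equation in $\omega$, pair with $\psi$ in the $(M,\omega g)$-inner product so that self-adjointness and the eigenvalue equation eliminate the $\delta\psi$ terms, split the variation of \eqref{eq:LapBelomega} into the $-\frac{n}{2}\lambda\omega^{-1}\psi^2$ piece and a divergence piece, and apply Green's formula to produce the $\frac{n-2}{2}\omega^{-1}\|\nabla_{\omega g}\psi\|^2_{\omega g}$ term, with \eqref{eq:Hadamard2} following from $d\mu_{\omega g}=\omega\,d\mu_g$ when $n=2$. Your writeup just makes explicit the differentiability-of-the-simple-eigenpair justification and the weight bookkeeping that the paper leaves implicit.
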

\begin{proof} Taking variations with respect to $\omega$, taking the $(M,\omega g)$-inner product with $\psi$, and using the eigenvalue equation, $-\Delta_{M,\omega g} \ \psi = \lambda \ \psi$, yields
$$
\delta \lambda \ \langle\psi, \psi\rangle_{\omega g} = 
\left\langle \psi ,\ 
\frac{n}{2} \omega^{-1} \delta \omega (- \lambda  \psi) - \frac{n-2}{2} \mathrm{div} \left[  (\omega^{-1} \delta w) \nabla_{\omega g} \psi \right] 
\right\rangle_{\omega g}. 
$$
Applying Green's formula yields \eqref{eq:Hadamard}.
\end{proof}

\subsection{Spectrum of the disconnected union of a surface and a sphere} \label{sec:disconnectedUnion}
It is useful to consider the spectrum of a disconnected union of a surface
$(M,g)$ and the sphere $(\mathbb S^2,g_0)$, denoted $(M',g')$.
Generally, the spectrum of disconnected manifolds consists of a union of the spectra of the connected components.  
Here, we consider the case where the sphere is dilated such that the $k$-th eigenvalue of $(M,g)$ is equal to the first eigenvalue of $(\mathbb S^2,g_0)$.  Consider the dilation
$$ 
(\mathbb S^2, g_0) \mapsto (\mathbb S^2, \alpha g_0).
$$
We choose $\alpha$ such that $\lambda_1(\mathbb S^2,\alpha g_0) = \lambda_k(M, g)$ implying
$$
\alpha^{-1} \lambda_1(\mathbb S^2, g_0) = \lambda_k(M, g).
$$
Since  $(\mathbb S^2, \alpha g_0)$ contributes an extra zero eigenvalue,  the $(k+1)$-th eigenvalue of the disjoint union $(M',g')$ is then $\lambda_k(M, g)$. 
The $(k+1)$-th volume-normalized eigenvalue of $(M',g')$ is then 
\begin{align*}
\Lambda_{k+1}(M',g') &= \lambda_k(M, g) \cdot \left( \vol(\mathbb S^2, \alpha g_0) +  \vol(M,g) \right) \\
&=  \lambda_k(M, g) \cdot \alpha \vol(\mathbb S^2,g_0) + \lambda_k(M, g) \cdot \vol(M,g)  \\
& = \Lambda_1(\mathbb S^2,g_0) + \Lambda_k(M, g).
\end{align*}
We remark that $(M',g')$ can be viewed as the degenerate limit of a sequence of surfaces \cite{Colbois2003}.

\subsection{Proof of Proposition \ref{prop:Existence}} \label{sec:ExistenceProof}
Fix $k\geq 1$. 
Let $(M,g_0)$ be a smooth, closed Riemannian surface and $0< \omega_-< \omega_+$. 
Write $\mathcal A = \mathcal A(M,g_0,\omega_-, \omega_+) $. 
Our proof of existence employs the direct method in the calculus of variations and follows \cite{Cox-McLaughlin:1990,Henrot:2006fk}. We first show that the supremum of $\Lambda_k(M,g_0,\cdot)$ on $\mathcal A$, as defined in \eqref{eq:opt:Linfty}, is finite and $\Lambda_k^\star(M,g_0,\omega_-, \omega_+) \leq \lambda_k^c(M,[g_0])$. Let $\omega \in \mathcal A$ be arbitrary.  By assumption, $(M,g_0)$ is  compact, so $\mathcal A \subset L^2$. Thus, $C^\infty$ is dense in $\mathcal A$. Using the 
 weak* continuity of $\Lambda_k(M, g_0, \cdot)$, there exists an $\tilde \omega \in C^\infty \cap \mathcal A$ with 
 $$
 \Lambda_k(M,g_0,\omega) \leq \Lambda_k(M,g_0,\tilde\omega) + \epsilon.
 $$
 Taking $\epsilon \downarrow 0$ we obtain $\Lambda_k^\star(M,g_0,\omega_-, \omega_+) \leq \lambda_k^c(M,[g_0]) < \infty$.

 Let  $\{ \omega_\ell \}_{\ell=1}^\infty$ 
be a maximizing sequence, {\it i.e.}, $\lim_{\ell \uparrow \infty} \Lambda_k(M,g_0, \omega_\ell ) \to \Lambda^\star_k$. Since $\mathcal A$ is weak* sequentially compact, there exists a $\omega_\star \in \mathcal A$ and a weak* convergent sequence $\{ \omega_\ell \}_{\ell=1}^\infty$ such that $\omega_\ell \to \omega_\star$ \cite{Cox-McLaughlin:1990,Henrot:2006fk}. Since the mapping $\omega \to \Lambda_k(M,  g_0,\omega)$ is weak* continuous over $\mathcal A$, 
$\Lambda^\star_k = \lim_{\ell\uparrow \infty} \Lambda_k(M,g_0,\omega_\ell ) = \Lambda_k(M, g_0,\omega_\star)$ \cite{Cox-McLaughlin:1990,Henrot:2006fk}.

For any $\epsilon >0$, by the definition of  supremum in \eqref{eq:ConEig}, there exists an $\bar \omega \in C^\infty(M)$ such that 
$$
0 \leq \Lambda^c_k(M,[g_0]) - \Lambda_k(M, \bar \omega g_0) \leq \epsilon . 
$$
Since $M$ is a compact surface, there exists $\omega_+(\epsilon) > \omega_-(\epsilon) > 0$
such that  $\bar \omega \in \mathcal A(M,g_0, \omega_-(\epsilon), \omega_+(\epsilon) )$. Using the optimality of $\Lambda^\star_k$, we have that 
$$
\Lambda^c_k(M,[g_0]) - \epsilon \leq \Lambda_k(M,\bar \omega g_0) 
=  \Lambda_k(M, g_0, \bar \omega) 
\leq \Lambda^\star_k(M,g_0, \omega_-(\epsilon), \omega_+(\epsilon) ). 
$$
$\hfill \square$

\section{The Laplace-Beltrami spectrum for spheres and tori}\label{sec:SphereandTori}

\subsection{Spectrum of a sphere} \label{sec:Sphere}
Consider $\mathbb S^2 = \{ (x,y,z)\in \mathbb R^3\colon x^2+y^2+z^2=1\}$
and let
$\iota\colon \mathbb S^2\hookrightarrow \mathbb R^3$ be the inclusion. 
Let $g_0:= \iota^*(dx^2+dy^2+dz^2)$ be the Riemannian
metric on $\mathbb S^2$ induced from the Euclidean metric
$dx^2+dy^2+dz^2$ on $\mathbb R^3$.
Consider the parameterization 
$$
x = \cos\phi \sin \theta, \quad 
y = \sin\phi \sin \theta, \quad 
z = \cos\theta, 
$$
where $\theta \in [0,\pi]$ is the colatitude and $\phi \in [0,2\pi]$ is the azimuthal angle. 
We compute $\vol(\mathbb S^2, g_0) =  4 \pi$. 
In these coordinates, the Laplace-Beltrami operator  is given  by 
$$
\Delta f = \frac{1}{\sin \theta} \partial_\theta \left( \sin \theta \  \partial_\theta f \right) + \sin^{-2}\theta \ \partial^2_\phi f. 
$$
The eigenvalues of the Laplacian on $(\mathbb S^2, g_0)$ are of the form $\ell (\ell+1)$, $\ell = 0,1,\ldots$, each with multiplicity $2\ell + 1$. It follows by scaling that the eigenvalues of a sphere of area 1 are $\Lambda(\mathbb S^2, g_0) = 4\pi\ell(\ell + 1)$. 
Typically, the spherical harmonic functions\footnote{See \url{http://dlmf.nist.gov/14.30}. }, denoted $Y_{\ell,m}(\theta, \phi)$, are chosen as a basis for each eigenspace. Numerical values of the volume-normalized eigenvalues, $\Lambda_k(\mathbb S^2,g_0) $, are listed in Table \ref{tab:ConfofmalSpectrum} for comparison. 

\begin{rmrk} \label{rem:isom} We remark that there are other (spatially dependent) metrics on the sphere isometric to $g_0$ and hence have the same Laplace-Beltrami spectrum. This impacts the uniqueness of optimization results presented later.  An example of such a metric is constructed as follows.

Let $N= (0,0,1)$ and $S=(0,0,-1)$ be the north pole and south pole of $\mathbb S^2$. There is a $C^\infty$ diffeomorphism 
(stereographic projection) 
$\pi\colon \mathbb S^2 -  \{ N\} \longrightarrow \mathbb R^2$, $\pi(x,y,z)= \left(\frac{x}{1-z}, \frac{y}{1-z} \right)$. 
The inverse map is given by $
\pi^{-1}\colon \mathbb R^2\longrightarrow \mathbb S^2 - \{ N\}$,
$$
\quad \pi^{-1}(u,v)=\left(\frac{2u}{1+u^2+v^2}, \frac{2v}{1+u^2+v^2}
,\frac{-1+u^2+v^2}{1+u^2+v^2} \right).
$$
Let $h:= (\pi^{-1})^* g_0$ be the pullback Riemannian 
metric on $\mathbb R^2$. Then 
$$
h= \frac{4(du^2+dv^2)}{(1+u^2+v^2)^2}.
$$
For any $\alpha\in \mathbb R$, define the dilation 
$T_\alpha: \mathbb R^2\longrightarrow \mathbb R^2$ by $T_\alpha(u,v)= (e^\alpha u, e^\alpha v)$. 
In particular, $T_0$ is the identity map.
For each $\alpha\in \mathbb R$, we define the following
Riemannian metric on $\mathbb S^2-\{N\}$,
$$
g_\alpha:= (\pi^{-1}\circ T_\alpha\circ \pi)^* g_0 = \pi^* T_\alpha^*h
=\frac{1}{(\cosh(\alpha) + \sinh(\alpha)\cdot z)^2} g_0.
$$
Then $g_\alpha$ extends to a $C^\infty$ Riemannian metric on $\mathbb S^2$ with constant sectional curvature $+1$. Note that when $\alpha=0$, the right hand side recovers $g_0$. 

The diffeomorphism $\pi^{-1}\circ T_\alpha\circ \pi\colon \mathbb S^2-\{N\}\longrightarrow \mathbb S^2-\{N\}$
extends to a diffeomorphism $\phi_\alpha\colon \mathbb S^2\longrightarrow \mathbb S^2$, and  $g_\alpha=\phi_\alpha^*g_0$.
So $\phi_\alpha\colon (\mathbb S^2, g_\alpha)\to (\mathbb S^2,g_0)$ is an isometry and
$\iota\circ\phi_\alpha\colon (\mathbb S^2,g_\alpha)\longrightarrow (\mathbb R^3,dx^2+dy^2+dz^2)$ is
an isometric embedding.  The isometric conformal factor for $\alpha=\frac 1 2$ is plotted in Figure \ref{fig:isometry}. 
To plot this conformal factor on the sphere in Figure  \ref{fig:isometry} (and again for Figures \ref{fig:optSpheres} and \ref{fig:sphereSequence}(left)), we have used the Hammer projection, 
\begin{align*}
x =  \frac{ 2\sqrt2 \cos \phi \sin \frac{\theta}{2}}{\sqrt{1 + \cos \phi \cos \frac{\theta}{2}}}, \quad 
y =  \frac{\sqrt 2 \sin \phi }{\sqrt{1 + \cos \phi \cos \frac{\theta}{2}}},
\end{align*}
where $\theta\in [0,2\pi]$ is the azimuthal angle (longitude) and $\phi \in \left[ -\frac{\pi}{2}, \frac{\pi}{2}\right]$ is the  altitudinal angle (latitude).

\end{rmrk}

\begin{figure}[t]
\begin{center}
 \includegraphics[width=8cm]{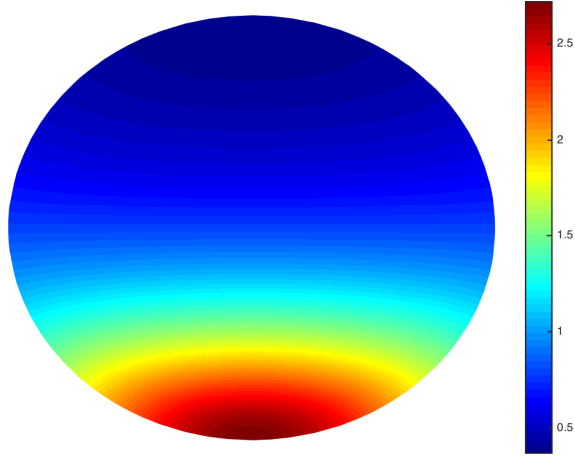} 
\caption{A Hammer projection of  a conformal factor on the sphere  that is isometric to the round sphere (and hence has the same spectrum). See \S\ref{sec:Sphere}.}
\label{fig:isometry}
\end{center}
\end{figure}

\subsection{Spectrum of $k$ ``kissing'' spheres}  \label{sec:KissingSpheres}
Let ($\mathbb S^2,g_0)$ be the sphere embedded in $\mathbb R^3$ with the canonical metric.  We consider $k$ copies of $(\mathbb S^2, g_0)$ and bring them together in $\mathbb R^3$, so that they are ``barely touching''. (This can be made precise by considering a sequence of surfaces  degenerating in this configuration  \cite{Colbois2003}.) 
We refer to this configuration as \emph{$k$~kissing spheres}.
It follows from \S\ref{sec:disconnectedUnion} that  $k$~kissing spheres will have $k$   zero eigenvalues ($0 = \lambda_0  = \ldots = \lambda_{k-1}$) with corresponding eigenfunctions localized and  constant on each sphere. The first nonzero volume-normalized  eigenvalue is 
\begin{equation}
\label{eq:kissingspheres}
\Lambda_k =  8 \pi k \qquad \text{($\lambda_k$ has multiplicity $3k$)}.
\end{equation} 
The corresponding eigenfunctions can be chosen to be  spherical harmonic functions supported on each single sphere.  Numerical values of the $k$-th eigenvalue of $k$ kissing spheres are listed in Table \ref{tab:ConfofmalSpectrum} for comparison.

\subsection{Spectrum of flat tori}\label{sec:FlatTori}
The flat torus is generated by identification of opposite sides of a parallelogram with the same orientation.   Consider the flat torus with corners $(0,0)^t$, $(1,0)^t$, $(a,b)^t$, and $(1+a,b)^t$.  
We  refer to this torus as the $(a,b)$-flat torus. 
This is isometric to the quotient of the Euclidean plane by the lattice $L$, $\mathbb R^2 / L$, where $L$ is the lattice generated by the two linearly independent vectors,  $b_1 = (1,0)^t$ and $b_2 = (a,b)^t$. 

The spectrum of the $(a,b)$-flat torus can be explicitly computed \cite{milnor1964,Giraud2011,laugesen2011}. Define
\begin{align*}
B=(b_{1},b_{2})=\begin{pmatrix}1 & a\\
0 & b
\end{pmatrix}. 
\end{align*}
The dual lattice $L^{*}$ is defined
$
L^{*}=\left\{ y\in \mathbb R^{2} \colon  x\cdot y  \in \mathbb Z, \ \forall x\in L \right\}
$
and has a basis given by the columns of  $D= (B^{t})^{-1} $. 
For the  $(a,b)$-flat torus, we compute 
\begin{align*}
D=(d_{1},d_{2})=(B^{t})^{-1}=\begin{pmatrix}1 & 0\\
-\frac{a}{b} & \frac{1}{b}
\end{pmatrix} & .
\end{align*}
Each $y\in L^{*}$ determines an eigenfunction $\psi(x)=e^{2\pi \imath x \cdot y  }$
with corresponding  eigenvalue $\lambda = 4\pi^{2} \| y \|^2 $. Since $y\in L^{*}  \implies - y\in L^{*}$, each nontrivial eigenvalue has even multiplicity.  
It follows that the eigenvalues of the  $(a,b)$-flat torus are of the form 
$$
\lambda(a,b)=4\pi^{2}\left[ c_{1}^{2}\left(1+a^2/b^2 \right)-2c_{1}c_{2}a/b^2+c_{2}^{2} / b^2  \right], \qquad (c_1, c_2) \in \mathbb Z^2. 
$$
More precisely, we can write a Courant-Fischer type expression for the $k$-th eigenvalue, 
\begin{equation} \label{eq:EigsFlatTori}
\lambda_{k}(a,b)= \min_{\substack{E \subset \mathbb Z^2\\ |E|=k+1}} \ 
\max_{(c_1,c_2)\in E} \ 
4\pi^{2}\left[ c_{1}^{2}\left(1+a^2/b^2 \right)-2c_{1}c_{2}a/b^2+c_{2}^{2} / b^2  \right]. 
\end{equation}
For example, the first eigenvalue of the $(\frac{1}{2},\frac{\sqrt{3}}{2})$-torus,
$\lambda_{1}=\frac{16\pi^{2}}{3}$ (multiplicity 6), is obtained when $(c_1,c_2)=(\pm1,0), \ \pm(1,1)$, or $(0,\pm1)$ 
implying 
$\Lambda_1 = \lambda_{1}b=\frac{8\pi^{2}}{\sqrt{3}} \approx 45.58$.
Numerical values of volume-normalized  Laplace-Beltrami  eigenvalues, $\Lambda_k(a,b) := \lambda_k(a,b) \cdot b$  for the square flat torus, $(a,b)=(0,1)$, and equilateral flat torus, $(a,b)= (\frac12,\frac{\sqrt{3}}{2})$  are listed in Table \ref{tab:ConfofmalSpectrum} for comparison.

\begin{figure}[t]
\begin{center}
 \includegraphics[width=9cm]{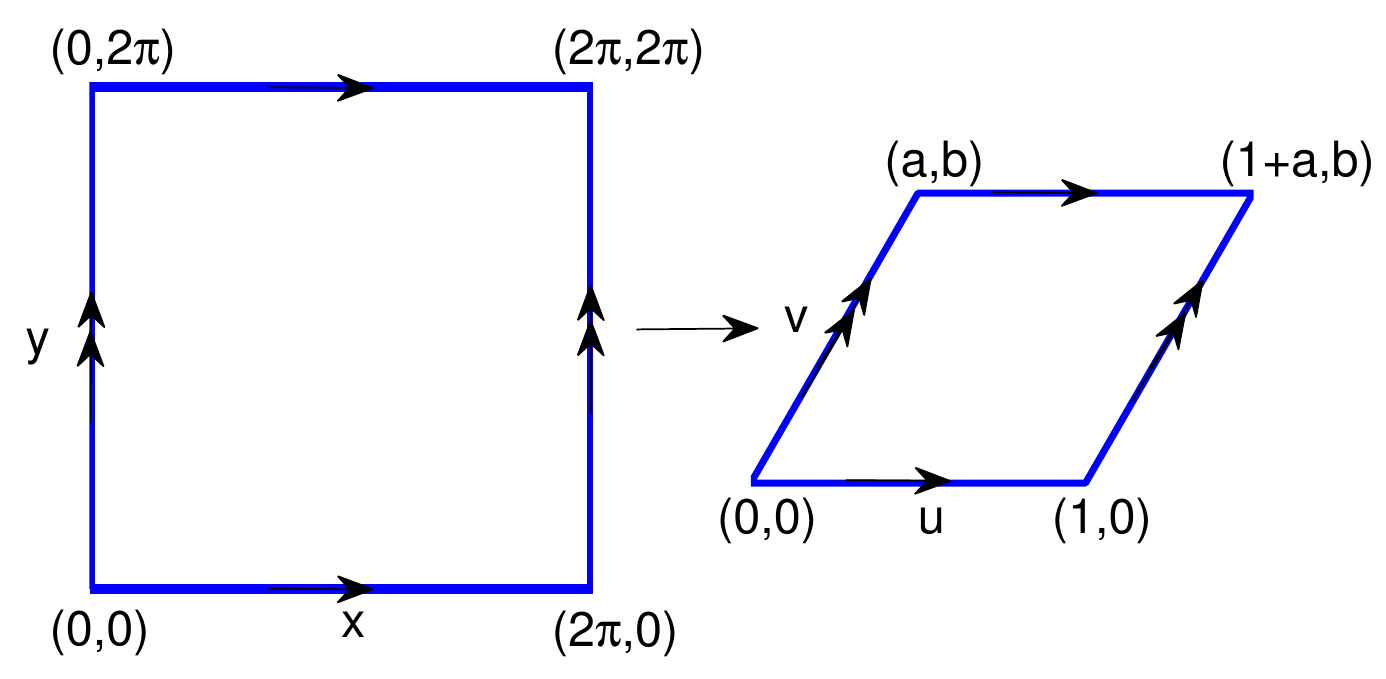}  \quad
 \includegraphics[width=7cm]{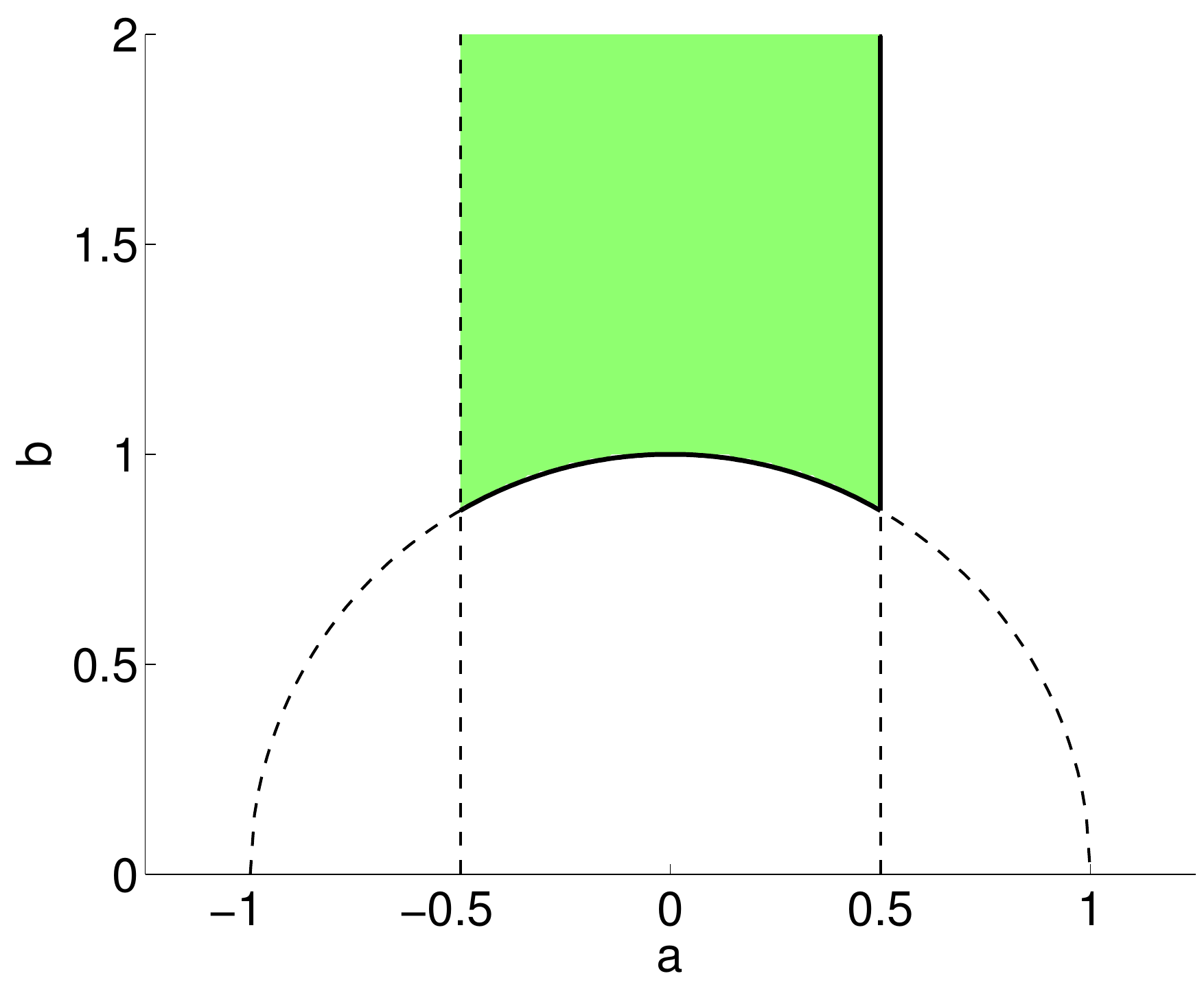}
\caption{{\bf (left)} Coordinates used in the construction of a flat torus. {\bf (right)} The fundamental domain for the moduli space of genus $\gamma=1$ Riemannian surfaces. See  \S\ref{sec:moduliSp} and \S\ref{sec:FlatTori}.}
\label{fig:coordDefs}
\end{center}
\end{figure}

It is useful to consider the linear transformation  from the $[0,2\pi]^2$ square to the $(a,b)$-flat torus,
\begin{align}
\label{eq:LinToriTransf}
\begin{pmatrix} u\\ v \end{pmatrix}
=  \frac{1}{2\pi} 
\begin{pmatrix}
1 & a \\
0 & b
\end{pmatrix}
\begin{pmatrix} x\\ y \end{pmatrix}
\quad \textrm{and} \quad 
\begin{pmatrix} x\\ y \end{pmatrix}
=   \frac{2 \pi}{b} 
\begin{pmatrix}
b & -a \\
0 & 1
\end{pmatrix}
\begin{pmatrix} u\\ v \end{pmatrix}. 
\end{align}
See Figure \ref{fig:coordDefs}. 
The pullback metric on the square is then given by 
$$
\frac{1}{4\pi^2} 
\begin{pmatrix}
1 & a \\
a & a^2+b^2
\end{pmatrix}. 
$$
Using  \eqref{eq:LapBelomega}, we obtain the Laplace-Beltrami operator on the square
\begin{equation}
\label{eq:Delab}
\Delta_{a,b} = \frac{4 \pi^2}{b^2} \left[ (a^2 + b^2) \partial_x^2 - 2a \partial_x \partial_y + \partial_y^2 \right]. 
\end{equation}
By construction, this mapping is an isometry and hence the eigenvalues of the flat Laplacian on the $(a,b)$-flat torus  are precisely the same as the eigenvalues of $\Delta_{a,b}$  on $[0,2\pi]^2$ (with periodic boundary conditions).  

The volume of the flat torus is simply $ b$. In this section, we consider the optimization problem 
\begin{equation}
\label{eq:maxEigsFlatTori}
\sup_{(a,b)\in \mathbb R^2} \ \Lambda_k (a,b), \quad \text{where} \quad \Lambda_k(a,b) := b\cdot  \lambda_k (a,b).
\end{equation}
Up to isometry and homothety (dilation), there is a one-to-one correspondence between the moduli space of flat tori and the fundamental region, 
 \begin{equation}
 \label{eq:Mad}
 F := \{ (a,b) \in \mathbb R^2 \colon \ a \in (-1/2,1/2]  \text{ and } a^2 + b^2 \geq 1 \},
\end{equation}
 as illustrated in  Figure  \ref{fig:coordDefs}(right). 
It follows that the admissible set in \eqref{eq:maxEigsFlatTori} can be reduced to $F$. To see this more explicitly, we prove in the following proposition that there exist three transformations of the parameters $(a,b)$ which preserve the value of $\Lambda_k(a,b)$. The first two are isometries and the third corresponds to a rotation and homothety. The last two are  due to the $SL(2,\mathbb Z)$ invariance of $\mathbb Z^2$ \cite{Imayoshi1992}.  Each transformation is  illustrated in Figure  \ref{fig:ToriTransform}. By composing these transformations, the fundamental domain can be restricted to $F$ and furthermore, on $F$, eigenvalues are symmetric with respect to the $b$-axis.

  
 \begin{figure}[t]
\begin{center}
\hspace{-1cm}
\includegraphics[height=4.4cm]{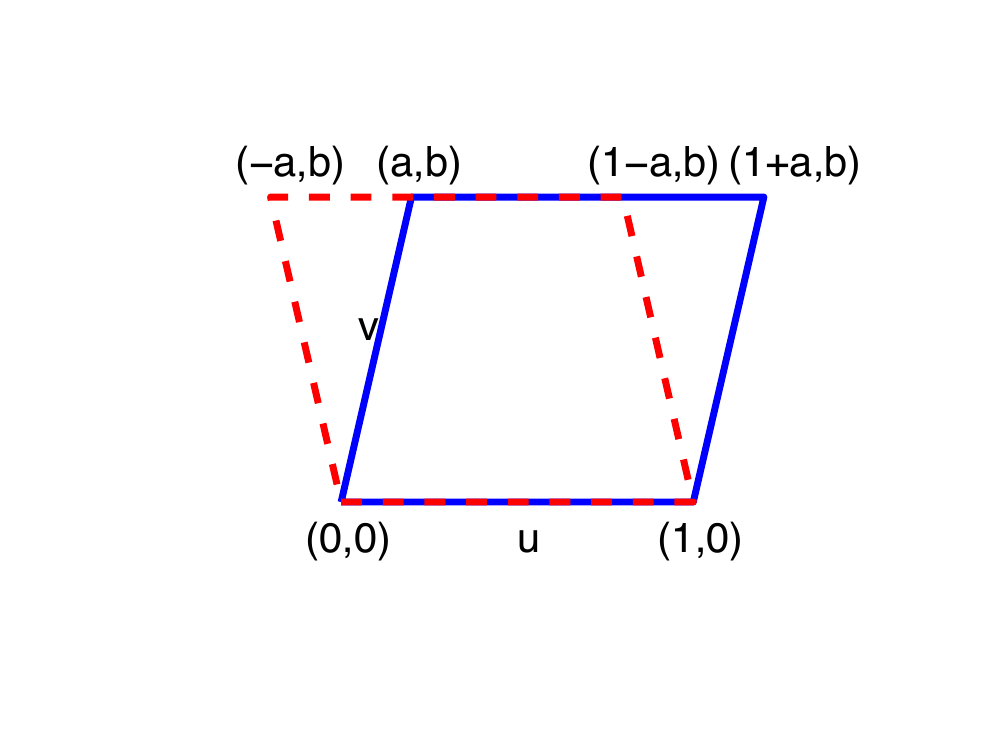}\hspace{-0.9cm}
\includegraphics[height=4.4cm]{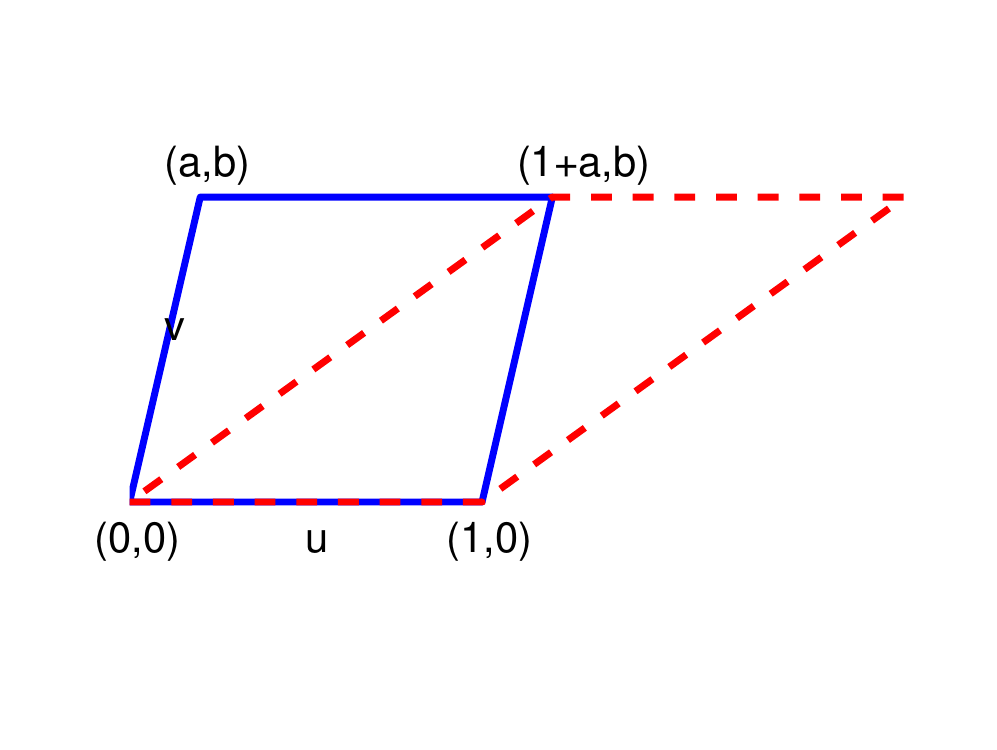}\hspace{-0.7cm}
\includegraphics[height= 3.7cm]{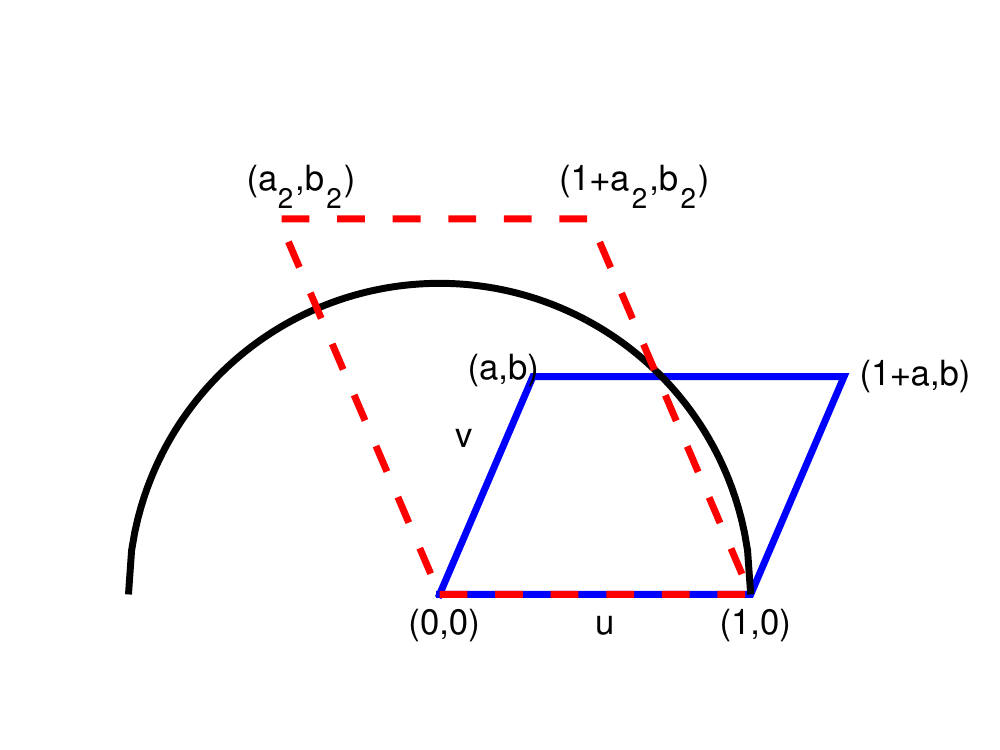}
\caption{An illustration of the transformations of flat tori in Proposition \ref{prop:FlatToriTransforms}. See \S\ref{sec:FlatTori}. }
\label{fig:ToriTransform}
\end{center}
\end{figure}

\begin{prpstn}
\label{prop:FlatToriTransforms}
The value of $\Lambda_k(a,b) := b\cdot  \lambda_k (a,b)$ is invariant under the transformations
\begin{align*}
(a,b) \mapsto (-a,b),  \qquad (a,b) \mapsto (a+1,b), \quad  \mathrm{and} \quad  (a,b) \mapsto \left( \frac{-a}{a^2 + b^2},  \frac{b}{a^2 + b^2} \right). 
\end{align*}
\end{prpstn}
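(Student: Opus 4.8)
The plan is to work directly from the explicit Courant--Fischer expression \eqref{eq:EigsFlatTori}, reducing each of the three invariances to exhibiting a suitable bijection of $\mathbb Z^2$. Write
\[
Q_{a,b}(c_1,c_2) := c_1^2\!\left(1+\tfrac{a^2}{b^2}\right) - \tfrac{2 c_1 c_2 a}{b^2} + \tfrac{c_2^2}{b^2} = \tfrac{1}{b^2}\bigl[\,c_1^2 b^2 + (c_1 a - c_2)^2\,\bigr],
\]
so that $\lambda_k(a,b) = 4\pi^2 \min_{|E|=k+1} \max_{(c_1,c_2)\in E} Q_{a,b}(c_1,c_2)$, with $E$ ranging over $(k+1)$-element subsets of $\mathbb Z^2$. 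If $\Phi\colon \mathbb Z^2\to\mathbb Z^2$ is a bijection and $\rho>0$ a constant with $Q_{a',b'} = \rho\,(Q_{a,b}\circ\Phi)$, then $\Phi$ induces a bijection on the family of admissible index sets, so $\lambda_k(a',b') = \rho\,\lambda_k(a,b)$; tracking the volume factor $b$ then yields the claim in each case. Equivalently, one may argue geometrically: $\Lambda_k(a,b)$ depends only on the similarity class of the lattice $L_\tau = \mathbb Z + \mathbb Z\tau$, $\tau = a + ib$, since isometric flat tori are isospectral and, by the dilation property in \S\ref{sec:background}, $\Lambda_k$ is homothety-invariant.

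For the first two transformations the volume factor $b$ is unchanged, so it suffices to check that the min--max of $Q$ is preserved. For $(a,b)\mapsto(-a,b)$ one has $Q_{-a,b}(c_1,c_2) = Q_{a,b}(c_1,-c_2)$, so $\Phi(c_1,c_2) = (c_1,-c_2)$ works with $\rho = 1$ (geometrically, the reflection $(x,y)\mapsto(-x,y)$, an isometry of the torus). For $(a,b)\mapsto(a+1,b)$ a short completion of the square gives $Q_{a+1,b}(c_1,c_2) = Q_{a,b}(c_1, c_2 - c_1)$, so $\Phi(c_1,c_2) = (c_1, c_2 - c_1)$ works with $\rho = 1$ (geometrically, the $SL(2,\mathbb Z)$ basis change $b_2 \mapsto b_1 + b_2$, which leaves $L$ unchanged).

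The substantive case is $(a,b)\mapsto(a',b') := \bigl(\tfrac{-a}{a^2+b^2}, \tfrac{b}{a^2+b^2}\bigr)$, corresponding to $\tau \mapsto -1/\tau$. Here I would first record $a'^2 + b'^2 = (a^2+b^2)^{-1}$, then verify, by completing the square in $Q_{a',b'}$ and comparing with the second form of $Q_{a,b}$ above,
\[
Q_{a',b'}(c_1,c_2) = (a^2+b^2)\, Q_{a,b}(c_2,-c_1).
\]
Since $\Phi(c_1,c_2) = (c_2,-c_1)$ (a rotation by $\pi/2$) is a bijection of $\mathbb Z^2$, this gives $\lambda_k(a',b') = (a^2+b^2)\lambda_k(a,b)$, whence
\[
\Lambda_k(a',b') = b'\,\lambda_k(a',b') = \tfrac{b}{a^2+b^2}\,(a^2+b^2)\,\lambda_k(a,b) = b\,\lambda_k(a,b) = \Lambda_k(a,b).
\]
Geometrically this reflects $L_{-1/\tau} = \tau^{-1} L_\tau$: multiplication by $\tau^{-1}$ is a rotation composed with a homothety of ratio $|\tau|^{-1} = (a^2+b^2)^{-1/2}$, scaling eigenvalues by $a^2+b^2$ and the volume by $(a^2+b^2)^{-1}$.

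The only genuine obstacle is the bookkeeping in this third case: one must confirm that the factor $a^2+b^2$ produced by the quadratic form is cancelled exactly by the change $b \mapsto b/(a^2+b^2)$ in the volume normalization, and that the substitution used to complete the square is an \emph{integer} change of variables (it is, being $(c_1,c_2)\mapsto(c_2,-c_1)$). Everything else is routine algebra; composing the three maps as in the discussion preceding the proposition then restricts the admissible set in \eqref{eq:maxEigsFlatTori} to the region $F$ of \eqref{eq:Mad} and yields the stated reflection symmetry across the $b$-axis.
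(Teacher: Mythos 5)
Your proof is correct, and it takes a genuinely different route from the paper's. You work entirely on the dual-lattice side: using the explicit Courant--Fischer expression \eqref{eq:EigsFlatTori}, you reduce each invariance to exhibiting a bijection $\Phi$ of $\mathbb Z^2$ and a scalar $\rho$ with $Q_{a',b'}=\rho\,(Q_{a,b}\circ\Phi)$ (here $\rho=1$, $1$, and $a^2+b^2$ respectively), and your algebra checks out, including the key identity $Q_{a',b'}(c_1,c_2)=(a^2+b^2)\,Q_{a,b}(c_2,-c_1)$ and the exact cancellation against $b'=b/(a^2+b^2)$. The paper instead argues at the level of the operator and its eigenfunctions: the reflection is dismissed as an isometry, the shift $(a,b)\mapsto(a+1,b)$ is handled by transplanting an eigenfunction piecewise onto the $(a+1,b)$-torus, and the inversion is treated by the coordinate change $(\tilde x,\tilde y)=(-y,x)$ applied to $\Delta_{a,b}$ on the square, which shows the spectrum rescales while $\Lambda_k$ is preserved. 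Your approach buys uniformity (all three cases are instances of one lemma about bijections of $\mathbb Z^2$ and scaling of the quadratic form, with the second and third maps visibly in $GL_2(\mathbb Z)$) and it sidesteps the slightly delicate eigenfunction-gluing step; its only cost is that it leans on the explicit spectrum formula \eqref{eq:EigsFlatTori}, which the paper's operator-level argument does not need, and which the paper's proof keeps closer to the geometric picture of isometry and rotation-plus-homothety. Either way, the consequence you draw --- restriction of \eqref{eq:maxEigsFlatTori} to the fundamental region $F$ of \eqref{eq:Mad} and the symmetry in $a$ --- follows just as in the paper.
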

\begin{proof}
The first transformation is an isometry of the flat torus and leaves the spectrum, and hence $\Lambda_k$, invariant. 

Suppose that $\psi_{a,b}(u,v)$ is an eigenfunction of the $(a,b)$-flat torus. Define the function on the $(a+1,b)$-flat torus, 
\[
\psi_{a+1,b}(u,v)=\left\{ \begin{array}{ccc}
u_{a,b}(u,v) & \mbox{if} & v>\frac{b}{a}(u-1)\\
\psi_{a,b}(u-1,v) & \mbox{if} & v\le\frac{b}{a}(u-1).
\end{array}\right.
\]
Since $\psi_{a,b}(u,v)$ is periodic, $\psi_{a+1,b}(u,v)$ is periodic too. The function
constructed is  an eigenfunction of the flat tori $(a+1,b)$
with the same eigenvalue.

To check invariance with respect to the third transformation, we consider the mapping
\[
(\tilde{x},\tilde{y})=(-y,x), \quad \text{and} \quad  
\left(\tilde{a},\tilde{b}\right)=\left(\frac{-a}{a^{2}+b^{2}},\frac{b}{a^{2}+b^{2}}\right).
\]
We then have
\[
\Delta_{(\tilde{a},\tilde{b})}^{\tilde{x},\tilde{y}} \tilde u
=\frac{4\pi^{2}}{\left(\frac{b}{a^{2}+b^{2}}\right)^{2}}\left[\left(\left(\frac{-a}{a^{2}+b^{2}}\right)^{2}+\left(\frac{b}{a^{2}+b^{2}}\right)^{2}\right)\tilde u_{\tilde{x}\tilde{x}}-2\left(\frac{a}{a^{2}+b^{2}}\right)\tilde u_{\tilde{x}\tilde{y}}+\tilde u_{\tilde{y}\tilde{y}}\right]=\lambda \tilde u
\]
\[
\implies \qquad \frac{4\pi^{2}}{b}\left[u_{yy}-2au_{xy}+(a^{2}+b^{2})u_{xx}\right]=\lambda\left(\frac{b}{a^{2}+b^{2}}\right)u=\lambda\tilde{b}u
\]
Thus, the spectrum scales by the factor $\frac{1}{a^2 + b^2}$, but $\Lambda_k$ is invariant. 
\end{proof}


Proposition \ref{prop:FlatToriTransforms} allows us to reduce the optimization problem \eqref{eq:maxEigsFlatTori} to  
\begin{equation} \label{eq:FlatF}
\Lambda^\star_k = \max \left\{ \Lambda_k(a,b) \colon (a,b)\in F\right\}. 
\end{equation}
The following proposition shows that \eqref{eq:FlatF} has a solution and gives a local maximum. 
We denote by $\left\lceil \cdot \right\rceil$ the ceiling function, $\left\lceil x \right\rceil$ for $x>0$  is the smallest integer not less than $x$.

\begin{prpstn}
\label{prop:lbtori} Fix $k\geq 1$. 
There exists a flat torus represented by a point $(a_k^\star, b_k^\star)\in F$ attaining the supremum  in \eqref{eq:FlatF}. 
Furthermore, the maximal value 
\begin{equation} \label{eq:reducedAdSet}
\tilde\Lambda_k = \max \left\{ \Lambda_k(a,b) \colon (a,b)\in F \text{ with } a^2 + b^2 \geq \left(  \left\lceil \frac{k}{2} \right\rceil -1\right)^2 \right\}
\end{equation}
has the following analytic solution 
\begin{equation}
\label{eq:flattori_exact}
\tilde\Lambda_{k} = \frac{4\pi^2 \left\lceil \frac{k}{2} \right\rceil^2}{\sqrt{\left\lceil \frac{k}{2} \right\rceil^2  - \frac{1}{4}} }, 
\end{equation}
which is attained by the $(a,b)$-flat torus with $(a,b) = \left(\frac{1}{2},\sqrt{\left\lceil \frac{k}{2} \right\rceil^2  - \frac{1}{4}}\right)$. 
The optimal value in \eqref{eq:flattori_exact} is obtained only for the integer lattice values 
$$(c_1,c_2) = (1,0), (-1,0), (1,1), (-1,-1), (0,\Big\lceil \frac{k}{2} \Big\rceil), \text{and }(0,-\Big\lceil \frac{k}{2} \Big\rceil)$$
and thus the maximal eigenvalue has multiplicity 6. 
\end{prpstn}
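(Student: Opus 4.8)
The plan is to read everything off the min--max formula \eqref{eq:EigsFlatTori}, used only in the trivial direction: for any finite $E\subset\mathbb Z^2$ with $|E|=k+1$,
\[
\lambda_k(a,b)\le \max_{(c_1,c_2)\in E} q_{a,b}(c_1,c_2),\qquad q_{a,b}(c_1,c_2):=4\pi^2\bigl[c_1^2(1+a^2/b^2)-2c_1c_2a/b^2+c_2^2/b^2\bigr].
\]
Write $m=\lceil k/2\rceil$, so $k\in\{2m-1,2m\}$, and note $q_{a,b}(c_1,c_2)=4\pi^2\bigl(c_1^2+(ac_1-c_2)^2/b^2\bigr)\ge 4\pi^2 c_1^2$. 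For the existence statement: on $F$ one has $b\ge\sqrt3/2>0$, so $q_{a,b}$, and hence each $\lambda_k(a,b)$ and $\Lambda_k(a,b)=b\lambda_k(a,b)$, is continuous in $(a,b)$; and for $b>m$ every $c$ with $c_1\ne0$ has $q_{a,b}\ge 4\pi^2>4\pi^2m^2/b^2$, so the $k+1$ smallest values of $q_{a,b}$ all come from $c_1=0$, and one computes $\lambda_k(a,b)=4\pi^2m^2/b^2$, i.e. $\Lambda_k(a,b)=4\pi^2m^2/b\to 0$ as $b\to\infty$ uniformly in $a$. Hence the supremum in \eqref{eq:FlatF} is unchanged when $F$ is replaced by the compact set $F\cap\{b\le B\}$ for $B$ large, so it is attained; this gives $(a_k^\star,b_k^\star)$.

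For the reduced problem I would prove two trial estimates and combine them. First, taking $E$ to be any $k+1$ of the $2m+1$ points $\{(0,c_2):|c_2|\le m\}$ gives, for \emph{all} $(a,b)$, that $\lambda_k(a,b)\le 4\pi^2m^2/b^2$, i.e. $\Lambda_k(a,b)\le 4\pi^2m^2/b$. Second, on the reduced region $a^2+b^2\ge(m-1)^2$, $|a|\le\tfrac12$, the $2m-1$ points $(0,c_2)$ with $|c_2|\le m-1$ have $q_{a,b}=4\pi^2c_2^2/b^2\le 4\pi^2(m-1)^2/b^2\le 4\pi^2(1+a^2/b^2)$, while $(\pm1,0)$ have $q_{a,b}=4\pi^2(1+a^2/b^2)$; choosing $E$ among these $2m+1$ points gives $\lambda_k(a,b)\le 4\pi^2(1+a^2/b^2)$, so $\Lambda_k(a,b)\le 4\pi^2(a^2+b^2)/b\le 4\pi^2(b+\tfrac1{4b})$, using $a^2\le\tfrac14$. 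Now split the reduced region at $b^2=m^2-\tfrac14$. If $b^2\ge m^2-\tfrac14$ the first estimate gives $\Lambda_k(a,b)\le 4\pi^2m^2/b\le 4\pi^2m^2/\sqrt{m^2-\tfrac14}$. If $b^2<m^2-\tfrac14$, then (as $b>\tfrac12$ on $F$, the map $b\mapsto b+\tfrac1{4b}$ is increasing) the second estimate gives $\Lambda_k(a,b)<4\pi^2\bigl(\sqrt{m^2-\tfrac14}+\tfrac1{4\sqrt{m^2-1/4}}\bigr)=4\pi^2m^2/\sqrt{m^2-\tfrac14}$. Hence $\tilde\Lambda_k\le 4\pi^2m^2/\sqrt{m^2-\tfrac14}$, and equality requires $b^2=m^2-\tfrac14$.

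Next I would verify this bound is attained at $(a,b)=(\tfrac12,\sqrt{m^2-\tfrac14})$, where $b^2=m^2-\tfrac14$ and $q_{a,b}(c_1,c_2)=\tfrac{4\pi^2}{m^2-1/4}\bigl(m^2c_1^2-c_1c_2+c_2^2\bigr)$. Writing the numerator as $m^2c_1^2+(c_2-\tfrac12 c_1)^2-\tfrac14 c_1^2$: it equals $c_2^2$ when $c_1=0$; for $c_1=\pm1$ its minimum over $c_2\in\mathbb Z$ is $m^2$, attained exactly at $(1,0),(1,1),(-1,0),(-1,-1)$, with next value $m^2+2$; for $|c_1|\ge2$ it is $\ge 4m^2-1>m^2$. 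So the values of $q_{a,b}$ in increasing order are $0$, then $\tfrac{4\pi^2j^2}{m^2-1/4}$ with multiplicity $2$ for $j=1,\dots,m-1$, then $\tfrac{4\pi^2m^2}{m^2-1/4}$ realized by the six points $(0,\pm m),(\pm1,0),(1,1),(-1,-1)$, then strictly larger values. Exactly $2m-1$ values lie strictly below $\tfrac{4\pi^2m^2}{m^2-1/4}$ and $k\in\{2m-1,2m\}$, so $\lambda_k=\tfrac{4\pi^2m^2}{m^2-1/4}$ with multiplicity $6$ at the asserted lattice vectors and $\Lambda_k=4\pi^2m^2/\sqrt{m^2-1/4}$, matching the upper bound; this is \eqref{eq:flattori_exact}. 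For uniqueness of the optimizer in $F$: equality forces $b^2=m^2-\tfrac14$, and if moreover $|a|<\tfrac12$ then $(\pm1,0)$ have $q_{a,b}=4\pi^2(a^2+b^2)/b^2<4\pi^2m^2/b^2$, giving $2m+1$ values below $4\pi^2m^2/b^2$ and hence $\Lambda_k<\tilde\Lambda_k$; thus $a=\tfrac12$.

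The main obstacle is recognizing that the obvious bound $\lambda_k\le 4\pi^2m^2/b^2$ is badly non-sharp for small $b$ --- near the inner boundary $a^2+b^2=(m-1)^2$ of the reduced region --- so that a second, complementary trial estimate is needed there; this is precisely where the hypothesis $a^2+b^2\ge(m-1)^2$ enters, both to supply enough low-lying lattice vectors of the form $(0,c_2)$ and to keep higher-$|c_1|$ and larger-$|c_2|$ vectors from interfering. The two estimates are tuned to agree exactly at $b^2=m^2-\tfrac14$, and it is this coincidence that simultaneously determines $\tilde\Lambda_k$ and pins down the optimizing torus; the remaining difficulty is only the routine but fiddly lattice-point count at the optimizer that yields multiplicity exactly $6$.
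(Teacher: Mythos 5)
Your proof is correct and follows essentially the same route as the paper: restrict to $F$, use the two trial families $\{(0,c_2):|c_2|\le m\}$ and $\{(0,c_2):|c_2|\le m-1\}\cup\{(\pm1,0)\}$ in the min--max formula, invoke $a^2+b^2\ge(m-1)^2$ so the $(\pm1,0)$ term dominates, and balance the two bounds at $b^2=m^2-\tfrac14$. Your explicit lattice-point count at $(\tfrac12,\sqrt{m^2-\tfrac14})$ (and the uniqueness remark) is somewhat more detailed than the paper's attainment step, but it is the same argument, with the paper's reduction to even $k$ replaced by treating $k\in\{2m-1,2m\}$ directly.
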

\begin{proof}
By Proposition \ref{prop:FlatToriTransforms}, we may restrict to the set $F$ as defined in \eqref{eq:Mad}. 
Since every eigenvalue of a flat torus has even multiplicity, without loss of generality,  we assume $k$ to be even, 
$$k=2m \quad \text{for} \quad m\in \mathbb Z.$$ 
We consider the Courant-Fischer type expression for the $k$-th eigenvalue 
\eqref{eq:EigsFlatTori} with a trial subspace of the form 
$$
E_k = \{ (0,0), \ (0,\pm1), \ldots (0,\pm m) \}. 
$$
(This is equivalent to using  \eqref{eq:CF} and a trial subspace of the form
$E_{k} = \text{span} \left\{ 1, e^{\pm \imath \ell y} \right\}_{\ell=1}^{ m } $ on the square.)
For each $k$, we obtain
$$
\Lambda_k(a,b) = b \ \lambda_k (a,b) \leq \frac{4 \pi^2 m^2}{b}.
$$
Let $\lambda_k^\square$ denote the  eigenvalues of the flat tori with $(a,b)=(0,1)$. For each $k$, define $\displaystyle \tilde b_k := \frac{C_k}{\lambda_k^\square}$. Thus, for $b>\tilde b_k$, 
$$
b \ \lambda_k (a,b) \leq 1 \ \lambda_k^\square.
$$ 
This implies that for each $k$ we can further restrict the admissible set to $\text{cl}(F) \cap \{(a,b): b\leq \tilde b_k\}$, where $\text{cl}(\cdot)$ denotes closure. Since this is a compact set, the supremum is attained.  

To show \eqref{eq:flattori_exact}, we rewrite the optimization problem using the  expression for Laplace-Beltrami eigenvalues of flat tori in \eqref{eq:EigsFlatTori},  
\begin{equation} \label{eq:JFT}
 \max_{(a,b)}   \
\min_{\substack{E \subset \mathbb Z^2\\ |E|=k+1}} \ 
\max_{(c_1,c_2)\in E}  \Lambda(a,b; c_1,c_2) 
\quad \text{where} \quad
\Lambda(a,b; c_1,c_2)  := 4\pi^2\left[\frac{(c_1 a - c_2)^2}{b} + c_1^2 b\right].
\end{equation}
In \eqref{eq:JFT}, we can rewrite 
$$
\Lambda(a,b; c_1,c_2) =  c^t A(a,b) c  
\quad \text{where}\quad 
A(a,b) = \frac{4 \pi^2}{b} \begin{pmatrix} a^2+ b^2 & -a \\ -a &1 \end{pmatrix} 
\quad \text{and} \quad
c = \begin{pmatrix} c_1\\c_2 \end{pmatrix}. 
$$
Furthermore, for every  $(a,b)\in F$,  we compute 
\begin{align*}
(\text{tr} A )^2 - 4 \text{det}(A) = \frac{16 \pi^4}{b^2} \left[ (a^2 + b^2 +1)^2 - 4 b^2\right] 
 = \frac{16 \pi^4}{b^2} \left[ a^2 +  (b-1)^2 \right] \left[ a^2 +  (b+1)^2 \right]  
 \geq 0, 
\end{align*}
which shows that each sublevel set of the  quadratic form can be viewed as an ellipse, circular for $(a,b)=(0,1)$. 
Thus, the equation for the eigenvalues of the $(a,b)$-flat torus \eqref{eq:EigsFlatTori} can be interpreted as follows. We consider increasingly large sub-level-sets of the $(a,b)$-ellipse,  {\it i.e.}, $\{ (x,y)\colon \Lambda(a,b;x,y) \leq \gamma\}$ for  increasing $\gamma$. Eigenvalues occur every time the sub-level-sets of the ellipse enclose a new integer lattice point. 
We thus interpret \eqref{eq:JFT} as finding the $(a,b)$-parameterized ellipse for $(a,b)\in F$ whose $k$-th smallest enclosed value on the integer lattice is maximal. 

When $k=1$ (or equivalently, $k=2$), we have from  \eqref{eq:JFT} that 
\begin{align*}
\tilde \Lambda_1 \ = \  \max_{(a,b)\in F}   \ \left\{  \min_{c \in E\setminus (0,0)}  \ c^t A(a,b) c \right\}
 \ \leq \  \max_{(a,b)\in F}   \  ( 0,  1)  A(a,b) ( 0,  1 )^t 
\ = \ \max_{(a,b)\in F}   \  \frac{4 \pi^2}{b}  
 \  = \   \frac{8 \pi^2}{\sqrt{3}} . 
\end{align*}
However, if we choose $(a,b) = \left( \frac{1}{2}, \frac{\sqrt 3}{2} \right)$, and solve the inner optimization problem in \eqref{eq:JFT}  to find the normalized eigenvalue, we obtain $\Lambda_1(a,b) = \frac{8 \pi^2}{\sqrt{3}}$. This implies that $\tilde \Lambda_1 =  \frac{8 \pi^2}{\sqrt{3}}$. 

Thus we can assume $k>2$. Let $m>1$ and $k=2m$.  Observe  that for $b>m$, the first $k$ nontrivial eigenvalues are obtained from  \eqref{eq:JFT}  by choosing $c_1=0$ and $c_2= \pm1, \pm2, \ldots,\pm m$. In this case, we find that $\Lambda_k = 4 \pi^2 m^2/b \leq 4 \pi^2 m$, attained in the case where $(a,b)=\left(\frac{1}{2}, m\right)$. We conclude that $\tilde \Lambda_{2m} \geq 4 \pi^2 m$ and that we can restrict the admissible set  to $b\leq m$.

We consider candidate subsets $E^j_k \subset \mathbb Z^2$, $j=1,2$ of the form 
\begin{align*}
E_k^1 &=   \{ (0,0), \ (0,\pm1), \ldots \left(0,\pm m \right) \} \\
E_k^2 &=   \{ (0,0), \ (0,\pm1), \ldots \left(0,\pm (m-1) \right), (\pm1,0) \} \\
\end{align*}
From    \eqref{eq:JFT}, we have that 
\begin{align*}
\tilde \Lambda_k & \leq  \max_{\substack{(a,b)\in F \\ b\leq m}}   \  \min_{j=1,2} \ \max_{c \in E^j_k} \ \Lambda(a,b,c_1,c_2) 
\end{align*}

We see that for $n< m$, 
$$
\Lambda(a,b,0,m)  = 4 \pi^2 \frac{m^2}{b} \geq 4 \pi^2  \frac{n^2}{b} = \Lambda(a,b,0,\pm n)
$$
and so  the elements in $E_k^1$ are dominated by  $(c_1,c_2) = (0,m)$. Thus,  
$$
\max_{c \in E^1_k} \ \Lambda(a,b,c_1,c_2)  =  \frac{4 \pi^2 m^2}{b} . 
$$

Looking at $E_k^2$, we have to compare the functions  $\Lambda(a,b,1,0) = 4 \pi^2 \left( \frac{a^2 + b^2}{b} \right) $ and $\Lambda(a,b,0,m-1) = \frac{4 \pi^2 (m-1)^2}{b} $. If $ a^2 + b^2 \ge (m-1)^2$ then the first term dominates. 
Thus, we have shown that if $ a^2 + b^2 \geq (m-1)^2$ then 
$$
\Lambda_k(a,b)  \ \leq \ 4 \pi^2 \cdot \max_{\substack{(a,b)\in F \\ b\leq m}}  \
  \min \ \left\{  \frac{a^2}{b} + b , \ \frac{m^2}{b}  \right\} 
  \ \leq  \ 4 \pi^2 \cdot \max_{\frac{\sqrt 3}{2} \leq b \leq m}  \
  \min  \left\{  \frac{1/2}{b} + b , \ \frac{m^2}{b}  \right\} 
$$
The first term is increasing for $b \geq \frac{1}{\sqrt{2}}$. The second term is decreasing in $b$. The optimal value of $b$ is found by setting the two terms equal to each other. They are equal  at $b = \sqrt{m^2 - 1/4}$ with value $ \frac{4 \pi^2 m^2}{\sqrt{m^2 - 1/4}}$. 
Thus, for all $ (a,b) \in F$, with $a^2 + b^2 \geq (m-1)^2$ we have that 
$$
\Lambda_k(a,b) \leq  \frac{4 \pi^2 m^2}{\sqrt{m^2 - 1/4}}. 
$$
with equality for 
$(a,b) = \left(\frac{1}{2},\sqrt{m^2  - 1/4}\right)$.  Equation  \eqref{eq:flattori_exact} then follows from the substitution $m\mapsto  \left\lceil \frac{k}{2} \right\rceil$.
\end{proof}

\begin{rmrk}
We note that the admissible sets in \eqref{eq:reducedAdSet} and \eqref{eq:FlatF} agree for $k=1,2,3,4$ and thus, the local maximum for \eqref{eq:reducedAdSet} given in Proposition \ref{prop:lbtori} is the global solution for \eqref{eq:FlatF}. 
 In particular, we recover  the result of  \cite{Berger1973} that $\frac{8\pi^2}{\sqrt{3}}$ is the largest first eigenvalue for any flat torus of volume one. 
 \end{rmrk}
 
 In Figure  \ref{fig:eig_FlatTori}, we plot $\Lambda_k(a,b)$ for $k=1\ldots 16$ and $(a,b)\in F$. Each eigenvalue has multiplicity two, so only odd values of $k$  are shown. 
Note that $\Lambda_k(a,b)$ has local maxima which are not globally maxima. We tabulate the values of the maximum of $\Lambda_k(a,b)$ in Table \ref{tab:ConfofmalSpectrum} for $k=1,\ldots,8$. 

\begin{rmrk}
We conjecture that the solutions to the optimization problems in  \eqref{eq:reducedAdSet} and \eqref{eq:FlatF} agree. 
 According to the proof of Proposition \ref{prop:lbtori}, this conjecture is equivalent to the statement: for  $a^2 + b^2 < (m-1)^2$ with $m\geq 3$, the ellipse 
$$
E(a,b) = \left\{ c\in \mathbb R^2 \colon c^t A(a,b) c \leq  \frac{4 \pi^2 m^2}{\sqrt{m^2 - 1/4}} \right\}
$$
contains at least $1+2m$ integer points. 
\end{rmrk}

\begin{figure}[h!]
\begin{center}
\vspace{-1cm} 
\includegraphics[width=14.2cm]{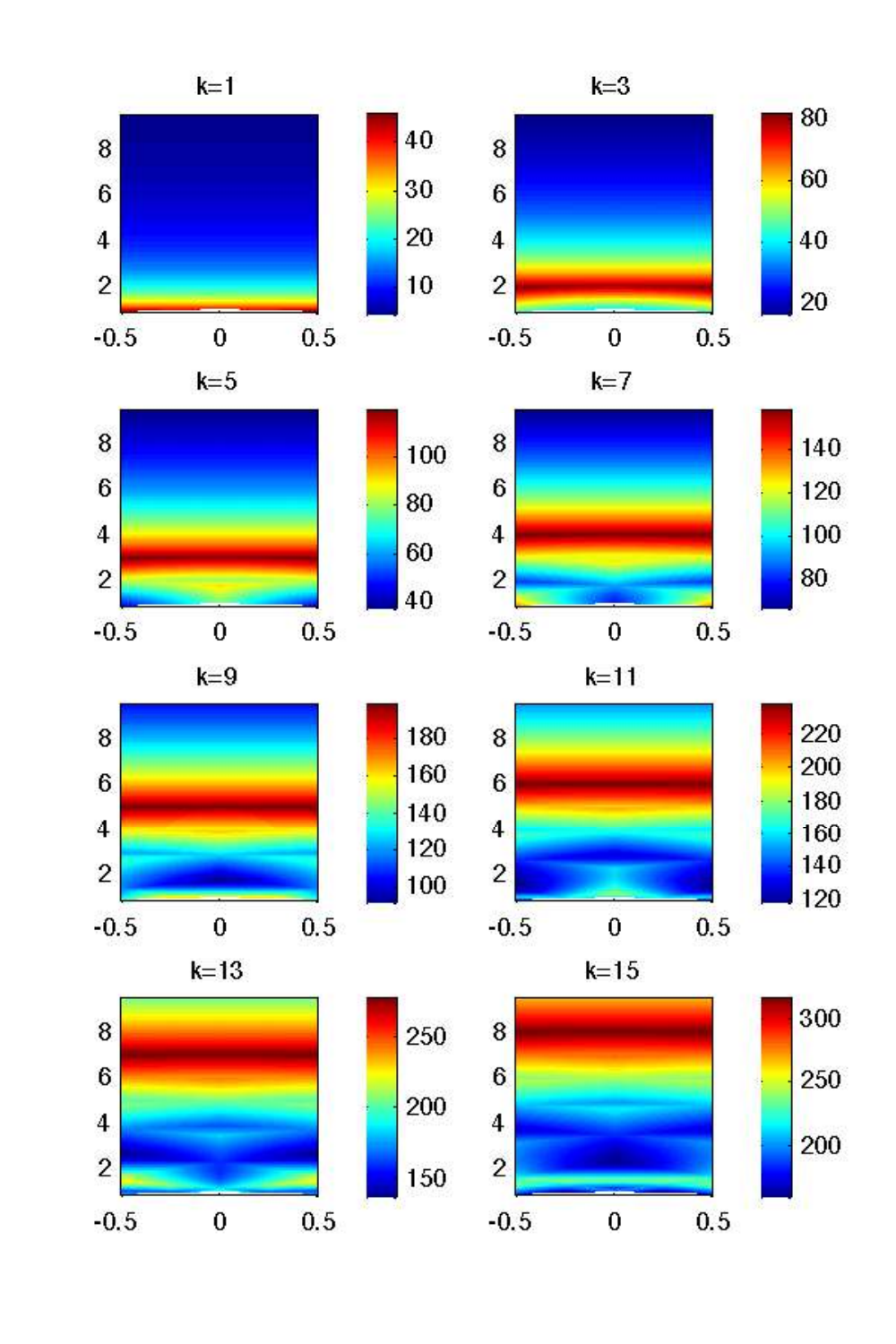} 
\vspace{-2cm} 
\caption{The first 16 volume-normalized eigenvalues, $\Lambda_k(a,b)$, of flat tori plotted as a function of the tori parameters $(a,b)$. Each eigenvalue has multiplicity two, so only odd eigenvalues are shown. 
See \S\ref{sec:FlatTori}.}
\label{fig:eig_FlatTori}
\end{center}
\end{figure}

The maximal value for $k=2$, $\Lambda_2^\star = 45.58$,  is less than the value for the $2$-kissing spheres, $\Lambda_2 = 50.26$. Generally, for all $k\neq1,3$, the maximum value for $\Lambda_k^\star$ is less than the value for $k$ kissing spheres. Since the topological spectrum is a non-decreasing function of the genus \cite{Colbois2003}, this implies that flat tori do not attain the genus $\gamma=1$ topological spectrum for $k\neq1,3$. Since, by \eqref{eq:SpectralGap},  $\Lambda^t_3(1) \geq \Lambda^t_1(1) + 4\pi  \approx 95.85$,  a flat tori also does not attain the genus $\gamma=1$ topological spectrum for $k=3$. Thus, for $k\geq 2$, to study the topological spectrum, we require an  inhomogeneous conformal factor.

\subsection{Spectrum of embedded tori} \label{sec:tori}
To provide another comparison, we consider the torus embedded in $\mathbb R^3$ with parameterization,
$$
x(u,v) = \left( (r \cos u  + R) \cos v , (r \cos u  + R) \sin v, r \sin u \right),  \qquad u,v \in [0,2\pi] . 
$$
Here $r>0$ is the minor radius, $R>r$ is the major radius, $u$ is the poloidal coordinate, and $v$ is the toroidal coordinate. See Figure  \ref{fig:eigsTori}. We consider the metric induced from $\mathbb R^3$, 
$$g(u,v) = 
\begin{pmatrix}
r^2 & 0 \\ 0 & (r \cos u + R)^2 
\end{pmatrix}. 
$$
From \eqref{eq:LapBelomega}, we obtain the Laplace-Beltrami operator
$$
\Delta f = r^{-2 } \left( r \cos u + R \right)^{-1} \partial_u  \left( r \cos u + R \right) \partial_u f + \left( r \cos u + R \right)^{-2} \partial_v^2 f.  
$$
Noting that the Laplace-Beltrami eigenvalue problem $-\Delta \psi = \lambda \psi$ is separable, we take $\psi(u,v) = \phi(u) e^{\imath m v} $ for $m\in \mathbb N$ to obtain the periodic eigenvalue problem on the interval $[0,2\pi]$, 
\begin{equation}
\label{eq:seperatedEigProb}
-r^{-2} \partial_u^2 \phi + r^{-1} \sin u \left( r \cos u + R \right)^{-1} \partial_u \phi + m^2 \left( r \cos u + R \right)^{-2} \phi = \lambda \phi . 
\end{equation}
Note that the eigenvalues for $m>0$ have multiplicity  at least two.  
We obtain spectrally accurate solutions to \eqref{eq:seperatedEigProb} using the \verb+Chebfun+ Matlab toolbox \cite{Driscoll2014}. 
Let $\mathbb T_a^2$ denote the torus with volume $(2 \pi)^2 R r = 1$ and (squared) aspect ratio 
$a^2 = R/r>1$. In Figure  \ref{fig:eigsTori}, we plot the volume-normalized Laplace-Beltrami eigenvalues, $\Lambda_k(a) := \lambda_k(\mathbb T_a^2,g) \cdot \vol(\mathbb T_a^2,g)$,  as a function of the aspect ratio, $a$. 
We remark that a similar figure appears in \cite{Glowinski2008}, where the eigenvalues are computed using a finite difference method. Numerical values of the eigenvalues for the horn torus ($a=1$) are listed in Table \ref{tab:ConfofmalSpectrum} for comparison. 

\begin{figure}[t]
\begin{center}
 \includegraphics[width=6cm]{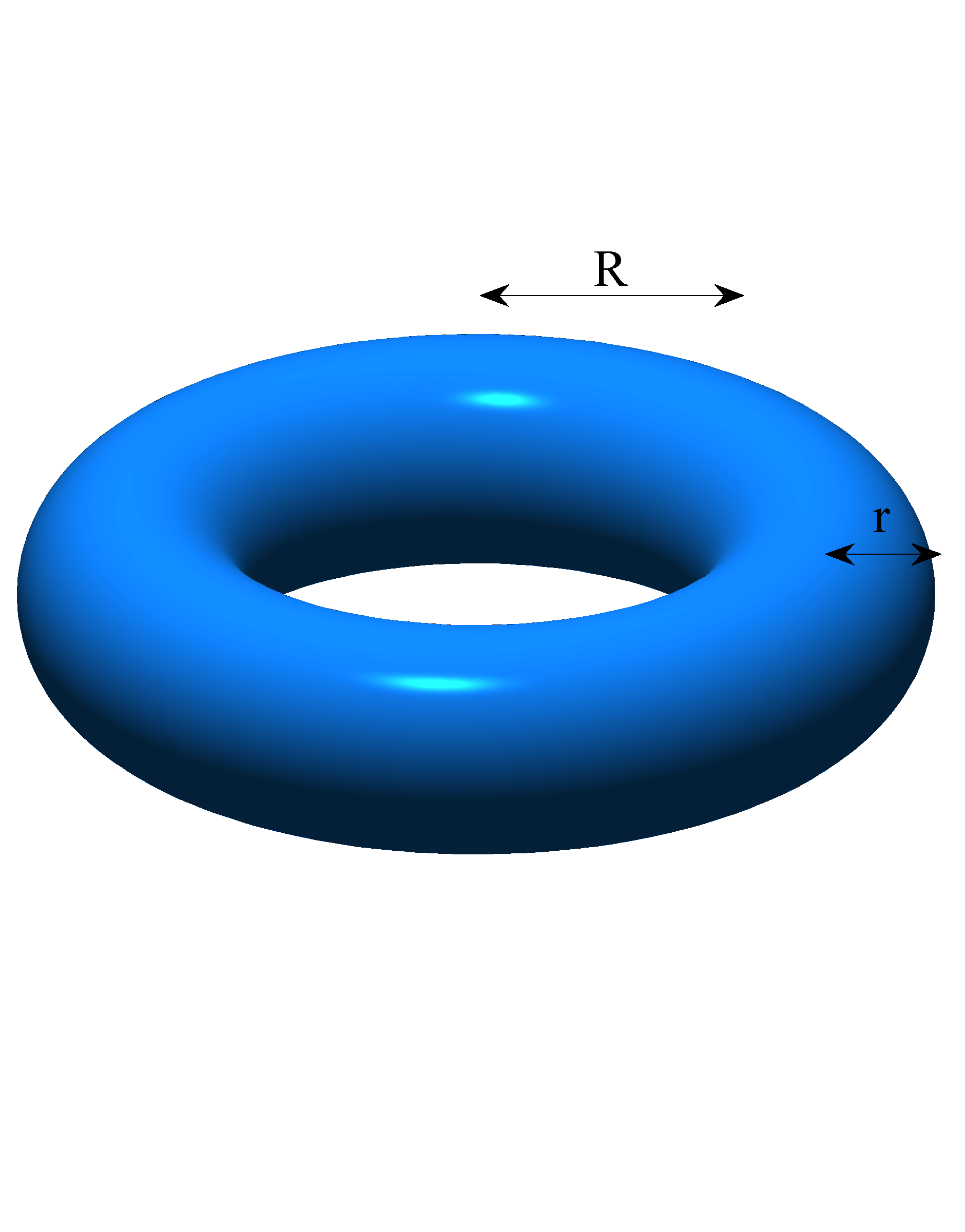}
 \includegraphics[width=10cm]{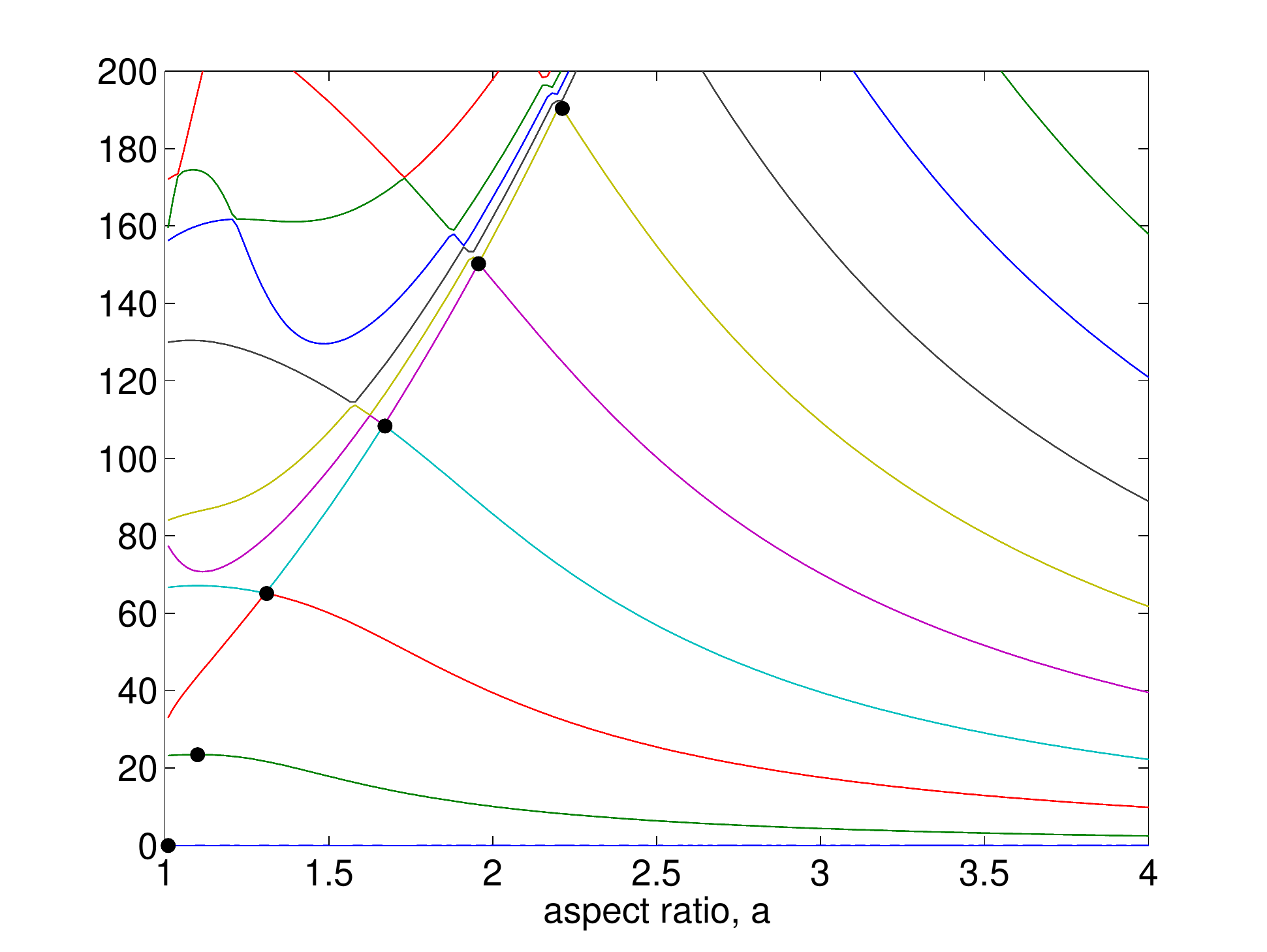}
\caption{{\bf (left)} A diagram of the coordinates used for the embedded tori.  {\bf (right)} The eigenvalues of an embedded  torus with volume one as the aspect ratio is varied. See \S\ref{sec:tori}. }
\label{fig:eigsTori}
\end{center}
\end{figure}

Now, consider the problem of maximizing the $k$-th  Laplace-Beltrami eigenvalue over the aspect ratio, $a$, 
\begin{align}
\label{eq:supEmbTori}
\sup_{a\in[1,\infty)} \ \Lambda_k(a). 
\end{align}
As $a \to \infty$, for fixed $k$, it is straightforward to show using the 
Courant-Fischer formula that $\Lambda_k(a) \to 0$, so 
there exists an $a^\star_k$ which attains the supremum in \eqref{eq:supEmbTori}.
 From Figure  \ref{fig:eigsTori}, we observe that $a^\star_k$ is an increasing sequence, corresponding to a sequence of tori with increasingly large aspect ratio. 
The numerical values of the optimal eigenvalues are listed in Table \ref{tab:ConfofmalSpectrum}. 
 The maximal eigenvalues have multiplicity greater than one. Each of the corresponding optimal eigenspaces contain an eigenfunction which is non-oscillatory in the poloidal coordinate and increasingly oscillatory in the toroidal coordinate ({\it i.e.}, the first eigenfunction of \eqref{eq:seperatedEigProb} for an increasing sequence in $m$). Compared to, {\it e.g.}, the flat tori studied in \S\ref{sec:FlatTori}, these maximal eigenvalues are relatively small and will not be further discussed. 

\section{Computational methods} \label{sec:compMeth}
In this section, we introduce a numerical method  for approximating the conformal  and topological spectra of a Riemannian surface $(M,g)$, as given in  \eqref{eq:ConEig} and \eqref{eq:TopEig}. 
Our method is an adaption of the  methods found in \cite{Oudet,Osting:2010,AntunesFreitas,OK12,OK12b} for shape optimization problems involving extremal eigenvalues of the Laplacian to the  setting of Laplace-Beltrami eigenvalues of Riemannian surfaces using the computational tools developed  in \cite{Lai2011,Shi2011conformal}. 
Our approach is to approximate \eqref{eq:ConEig} and \eqref{eq:TopEig} using \eqref{eq:opt:Linfty} and \eqref{eq:opt:LinftyTop} respectively, as justified by Proposition 
 \ref{prop:Existence}. 
 
For the computation of Laplace-Beltrami eigenpairs, we use both finite element and spectral methods, which we describe in  \S\ref{sec:homer}. Generally spectral methods are more accurate than finite element methods, but are difficult to implement for general surfaces. Therefore, we use spectral methods for computations on the torus and finite element methods for computations on other surfaces.

We numerically solve the optimization problem in \eqref{eq:opt:Linfty} as follows. 
 For a fixed surface, $(M,g_0)$, we evolve $\omega$ within $\mathcal A(M,g_0,\omega_-, \omega_+)$ to increase $\Lambda_k(M,g_0,\omega)$. 
At each iteration, the variation of $\Lambda_k(M,g,\omega)$ with respect to the conformal factor is computed using Proposition \ref{prop:Hadamard}, as described below in \S\ref{sec:gradientFlow}. 
This can be viewed as an ``optimize-then-discretize'' approach to the problem, where the analytically computed gradient is evaluated   using discretized quantities. This is in contrast to the ``discretize-then-optimize'' approach in which a finite dimensional version of the problem would be formulated and the gradient of the discretized objective function would be used. 
The BFGS quasi-Newton method is then used to determine a direction of ascent, in which the metric is evolved for a step-length determined by an Armijo-Wolfe line search. A log-barrier interior-point method is used to enforce $L^\infty(M)$ constraints. The process is iterated until a metric $g$ satisfying convergence criteria is obtained. Metrics obtained by this approach are (approximately) local maxima of $\Lambda_k(M,g)$, not necessarily global maxima. We repeat this evolution for many different choices of initial metric and choose the conformal factor which yields the largest value of $\Lambda_k(M,g)$. 

For the solution of the optimization problem in \eqref{eq:opt:LinftyTop}, we additionally must consider a parameterization of the conformal classes. For genus $\gamma=1$, this parameterization $(a,b) \in F$ is described in \S\ref{sec:moduliSp} and illustrated in Figure \ref{fig:coordDefs}(right). We use the same strategy as for  \eqref{eq:opt:Linfty}, except we also evolve the parameters $a$ and $b$ to increase $\Lambda_k(M,g,\omega)$. The derivatives of $\lambda_k(M,g,\omega)$ with respect to the parameters $a$ and $b$ are computed  in \S\ref{sec:gradientFlow}.

The reader may have noticed that we use Hadamard's formula (Proposition \ref{prop:Hadamard}) to compute the variation of $\lambda_k(M,\omega g_0)$ with respect to the conformal factor, $\omega$, and this formula is only valid for simple eigenvalues. It is well-known that eigenvalues $\lambda_k(M,g)$ vary continuous with the metric $g$, but are not differentiable when they have multiplicity greater than one.  In principle, for an analytic deformation $g_t$, left- and right-derivatives of  $\lambda_k(M,g_t)$ with respect to $t$ exist \cite{Soufi2008,penskoi2013c} and could be computed numerically. However, in practice, 
eigenvalues computed numerically that approximate the Laplace-Beltrami eigenvalues of a surface are always simple. This is due to discretization error and finite precision. Thus, we are faced with the problem of maximizing  a function that we know to be non-smooth, but whose gradient  is well-defined at points in which we sample. For a variety of such non-smooth problems, the BFGS quasi-Newton method with an inexact line search has proven to be very effective \cite{LewisOverton2013}, but the convergence theory remains sparse. In particular, for this problem, a gradient ascent algorithm will generate a sequence of conformal factors where the $k$-th and $(k+1)$-th eigenvalues will converge towards each other. The sequence will become ``stuck'' at this point and the objective function values will be relatively small compared to the optimal value.  As reported in other computational studies of extremal eigenfunctions  \cite{Osting:2010,AntunesFreitas,OK12b}, for this problem we observe that a BFGS approximation to the Hessian avoids this phenomena. 

Finally, in Proposition  \ref{prop:Existence}, we introduced two constants $\omega_+$ and $\omega_-$ which provide point-wise bounds on the conformal factor $\omega(x)$ for $x\in M$. An approximate  solution to \eqref{eq:ConEig} can be obtained by  computing the solution to  \eqref{eq:opt:Linfty}  for a sequence of values $\omega_+$ and $\omega_-$ such that $\omega_+ \uparrow \infty$ and $\omega_- \downarrow 0$. In practice, we fix $\omega_+$ and $\omega_-$ to be large and small constants respectively. Taking sequences tending to $\pm \infty$ would be a poor idea as conformal factors with very large or small values reduce computational  accuracy. 

In the following subsections, we describe the methods used for the computation of the Laplace-Beltrami eigenpairs, as well as compute the variation of Laplace-Beltrami eignenvalues with respect to the conformal factor  and moduli space parameters.

\subsection{Eigenvalue computation}  \label{sec:homer}
In this section, we describe the finite element and spectral methods for computing Laplace-Beltrami eigenpairs. 

\subsubsection*{Finite Element Method}
For some of our eigenpair computations, we use the finite element method (FEM) \cite{reuter2006,qiu2006,LarssonThomee,Boffi}, which we briefly describe here. The finite element method is based on the  weak formulation of  \eqref{eq:eig}, given by
\begin{equation}
\label{eqn:WeakLBeigs}
\int_{M}\nabla_{M}\psi \cdot \nabla_{M}\eta = \lambda \int_{M} \psi \eta,\quad\quad \forall~ \eta\in C^{\infty}(M).
\end{equation}
Numerically, we represent $M \subset \mathbb{R}^3$ as a triangular mesh
$\{V=\{v_i\}_{i=1}^{N},T=\{T_l\}_{l=1}^L\}$, where $v_i\in\mathbb{R}^3$ is the i-th
vertex and $T_l$ is the l-th triangle. We use piecewise linear elements to discretize the surface, so that the triangular mesh approaches the smooth surface in the $L^2$-sense as the mesh is refined. We choose linear conforming elements $\{e_i\}_{i=1}^{N}$ satisfying
$e_i(v_j)=\delta_{i,j}$, where $\delta_{i,j}$ is the Kronecker delta symbol, and write
$S=\text{span} \{e_i\}_{i=1}^{N}$. The discrete  Galerkin version of \eqref{eqn:WeakLBeigs}  is to find a
$\phi\in S$, such that  
$$
\displaystyle  \sum_l\int_{T_l}\nabla_{M}\phi \cdot \nabla_{M}
 \eta=\lambda\sum_l\int_{T_l}\phi~\eta,~\quad \forall\eta\in S. 
$$
We define
\begin{align*} 
\phi &=\sum_i^N x_ie_i\\
A_{ij} &=
\sum_l\int_{T_l}\nabla_{M}e_i\nabla_{M}
e_j\\
B_{ij} &=\sum_l\int_{T_l}e_ie_j,
\end{align*}   
where the \emph{stiffness matrix}, $A$, is symmetric and the \emph{mass matrix}, $B$, is symmetric and positive definite. Both $A$ and $B$  are sparse $N\times N$  matrices. The finite element method approximates solutions to \eqref{eqn:WeakLBeigs} by solving the generalized matrix eigenproblem, 
\begin{equation}
\label{eq:genMatEig}
Ax =\lambda Bx, \qquad \phi =\sum_i^N x_ie_i. 
\end{equation}
There are a variety  of numerical packages to solve \eqref{eq:genMatEig}. We use Matlab's built-in function \verb+eigs+ with default convergence criteria. This eigenvalue solver is based on Arnoldi's method \cite{sorensen1992implicit,lehoucq1996deflation}.
Figure \ref{fig:RateofConv} demonstrates the 2nd order of convergence in the mesh size $h ~(\sim \sqrt{N^{-1}} )$ for the Laplace-Beltrami eigenvalues of the unit sphere; see \S\ref{sec:Sphere} for explicit analytic values.  Higher eigenvalues generally have larger  error than lower eigenvalues;  higher order elements could be used for improved accuracy.

To further demonstrate the flexibility of the finite element method for computing eigenvalues of surfaces and to provide a comparison of eigenvalues for a ``typical'' embedded mesh, we also consider a surface in the shape of Homer Simpson embedded in $\mathbb R^3$, equipped with the induced metric.  This mesh has 21,161 vertices. In Figure \ref{fig:Homer}, we plot the first  8 nontrivial eigenfunctions. 
 Note that in Figure \ref{fig:Homer} and later three-dimensional  plots 
(Figures \ref{fig:KissingSpheres}, \ref{fig:KissingSpheres2}, \ref{fig:homerGlobalParam}, \ref{fig:KissingSphereFlatTorus1},
and \ref{fig:KissingSphereFlatTorus2}), 
we use a Matlab  visualization effect, achieved by the command, \verb+lighting phong+. 
Although the reflection makes it easier to see the three-dimensional structure, it also slightly distorts the color. 
Numerical values of the corresponding volume-normalized eigenvalues are listed in Table \ref{tab:ConfofmalSpectrum} for comparison. 
 We use this mesh again in \S\ref{sec:genus0} to illustrate a solution for the topological eigenvalue problem.

\begin{figure}[t]
\begin{center}
\includegraphics[width=.6\linewidth]{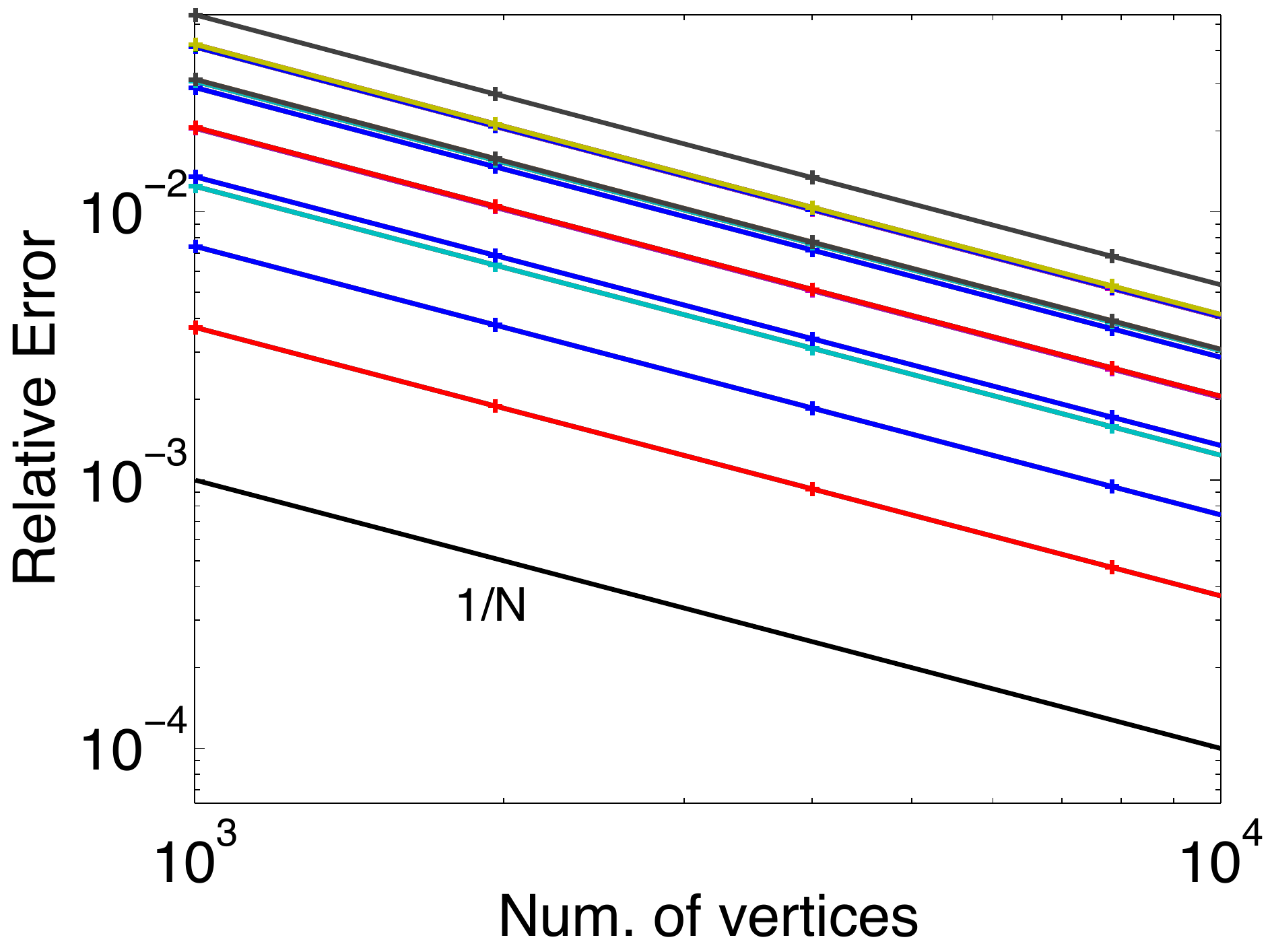}
\caption{Relative error of the finite element method for computing the first 50 Laplace-Beltrami eigenvalues on the unit sphere. Each curve in this figure represents one eigenvalue. (Lower eigenvalues are more accurate.) See \S\ref{sec:homer}. }
\label{fig:RateofConv}
\end{center}
\end{figure}

Since the finite element method approximates the variational problem \eqref{eq:CF} by a variational problem where the trial functions are taken to be a linear combination of basis functions, it overestimates the eigenvalues. This is undesirable since \eqref{eq:opt:Linfty} and \eqref{eq:opt:LinftyTop} are  \emph{maximization} problems.  Lower bounds on the eigenvalues could also be obtained numerically using non-conforming elements \cite{armentano2004}, however this is beyond the scope of this paper. 

\begin{figure}[t]
\begin{center}
\begin{minipage}{0.245\linewidth}
\centering $\lambda_1 = 7.74$
\includegraphics[width=1\linewidth]{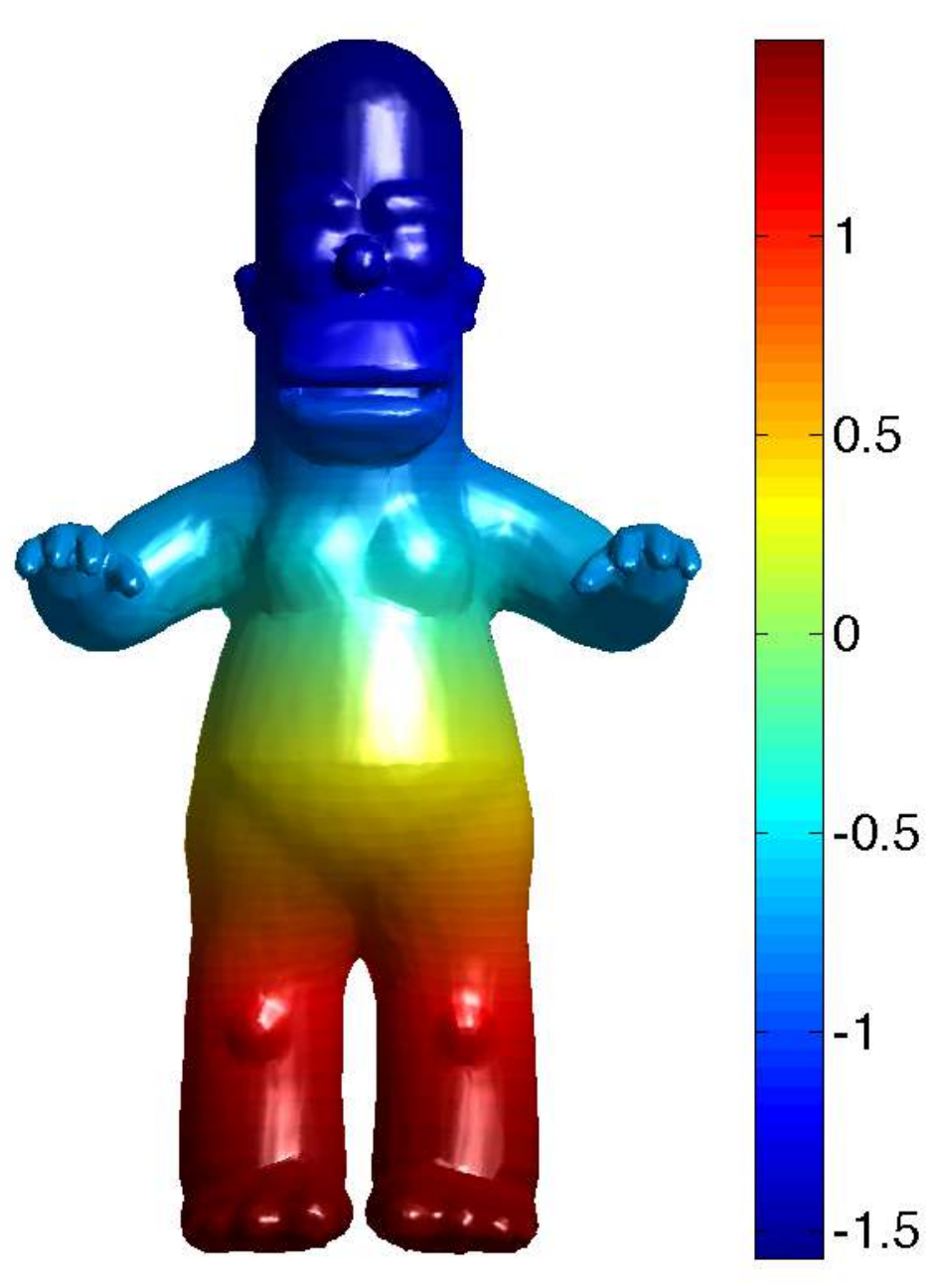}
\end{minipage}\hfill
\begin{minipage}{0.245\linewidth}
\centering $\lambda_2 = 16.98$
\includegraphics[width=1\linewidth]{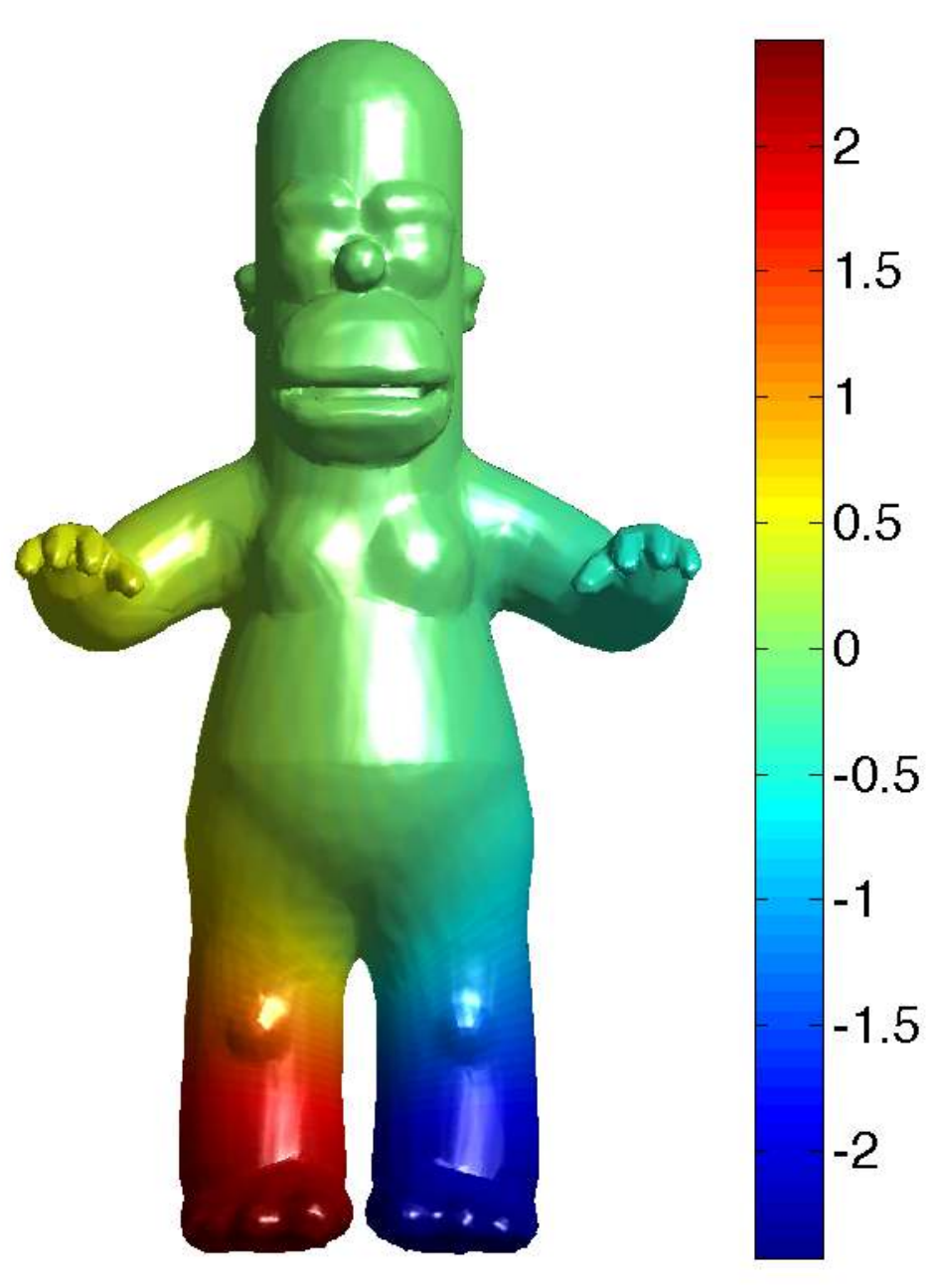}
\end{minipage}\hfill
\begin{minipage}{0.245\linewidth}
\centering $\lambda_3 = 20.58$
\includegraphics[width=1\linewidth]{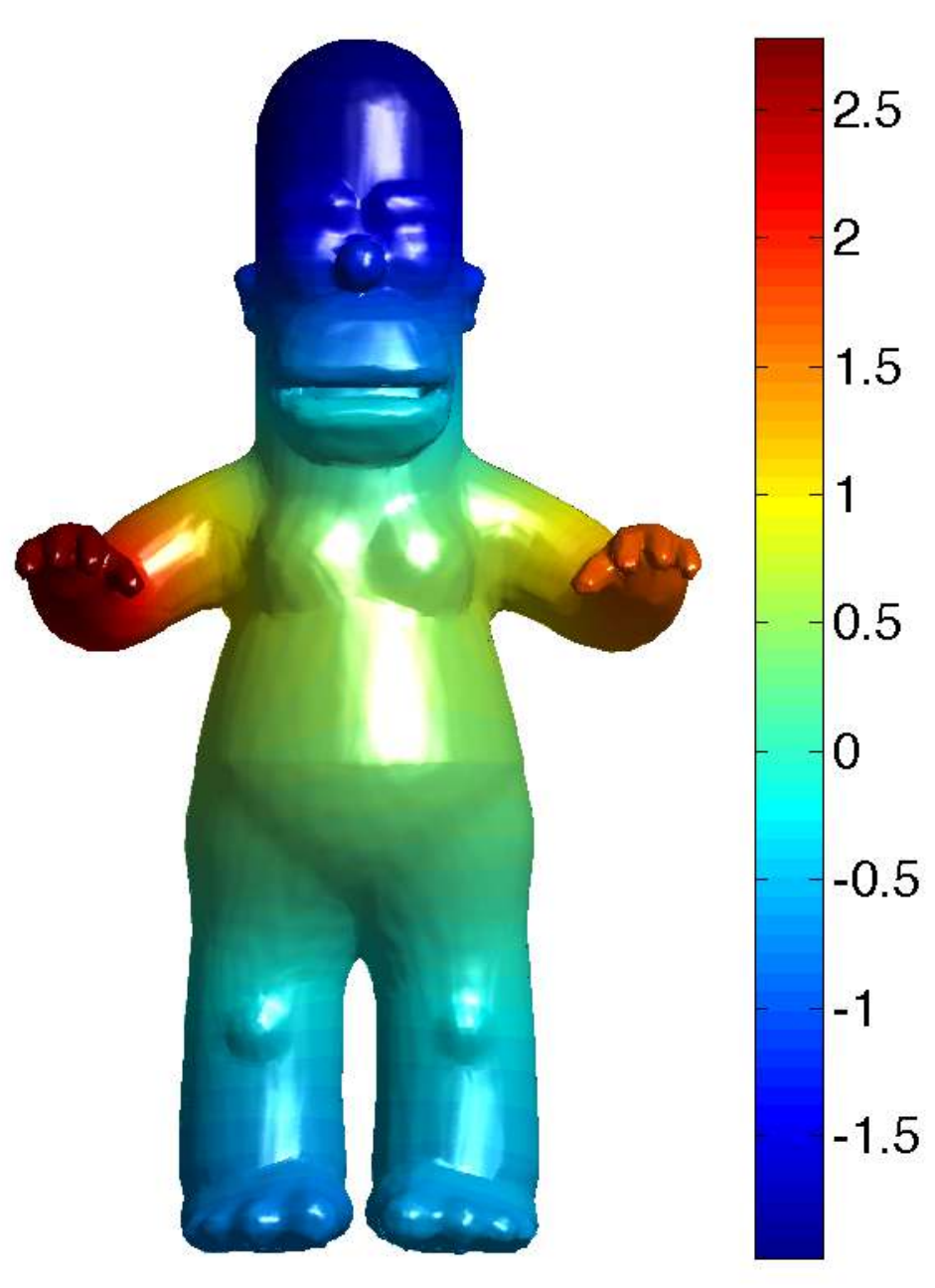}
\end{minipage}\hfill
\begin{minipage}{0.245\linewidth}
\centering $\lambda_4 = 21.53$
\includegraphics[width=1\linewidth]{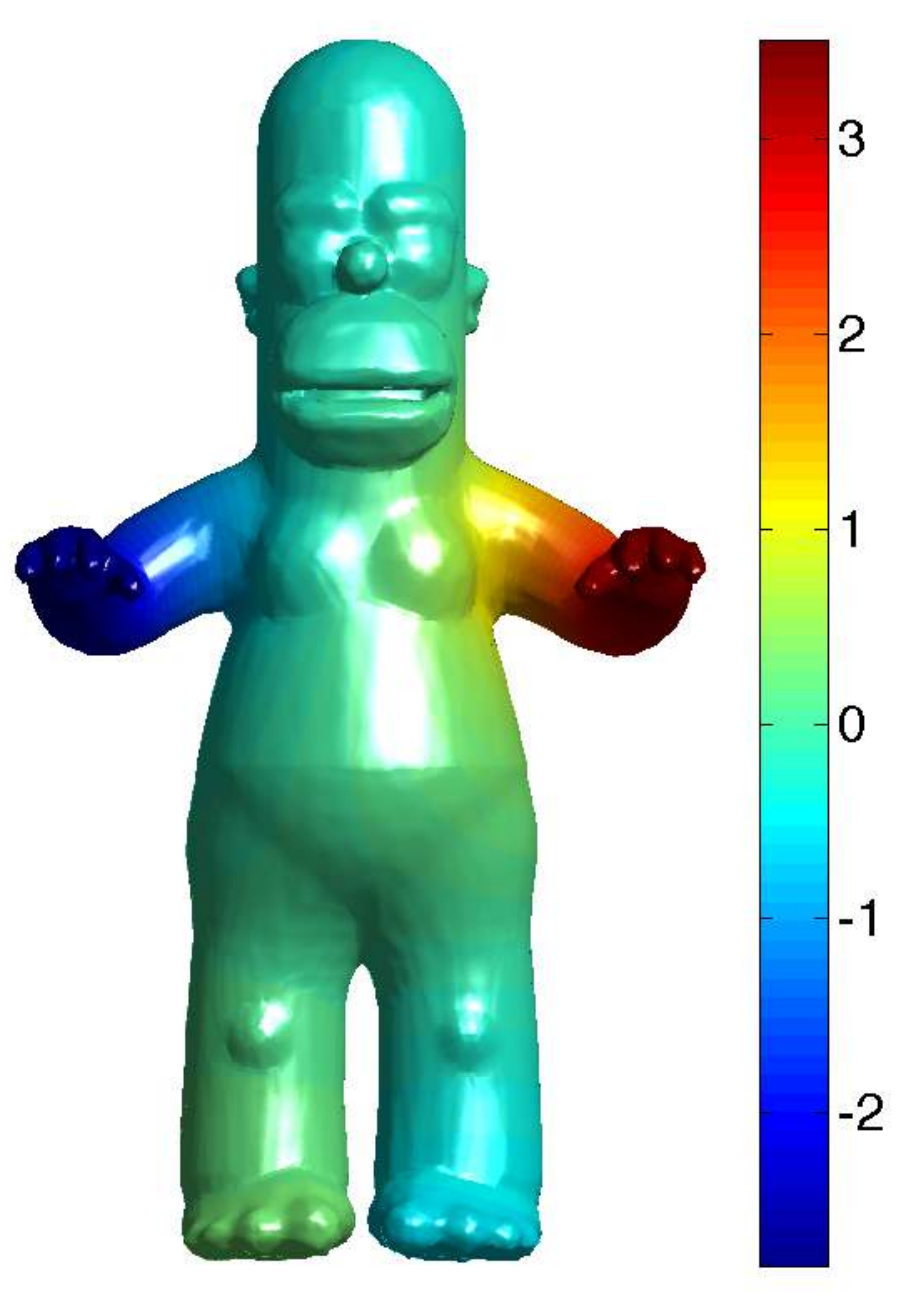}
\end{minipage}\hfill\\
\begin{minipage}{0.245\linewidth}
\centering $\lambda_5 = 42.58$
\includegraphics[width=1\linewidth]{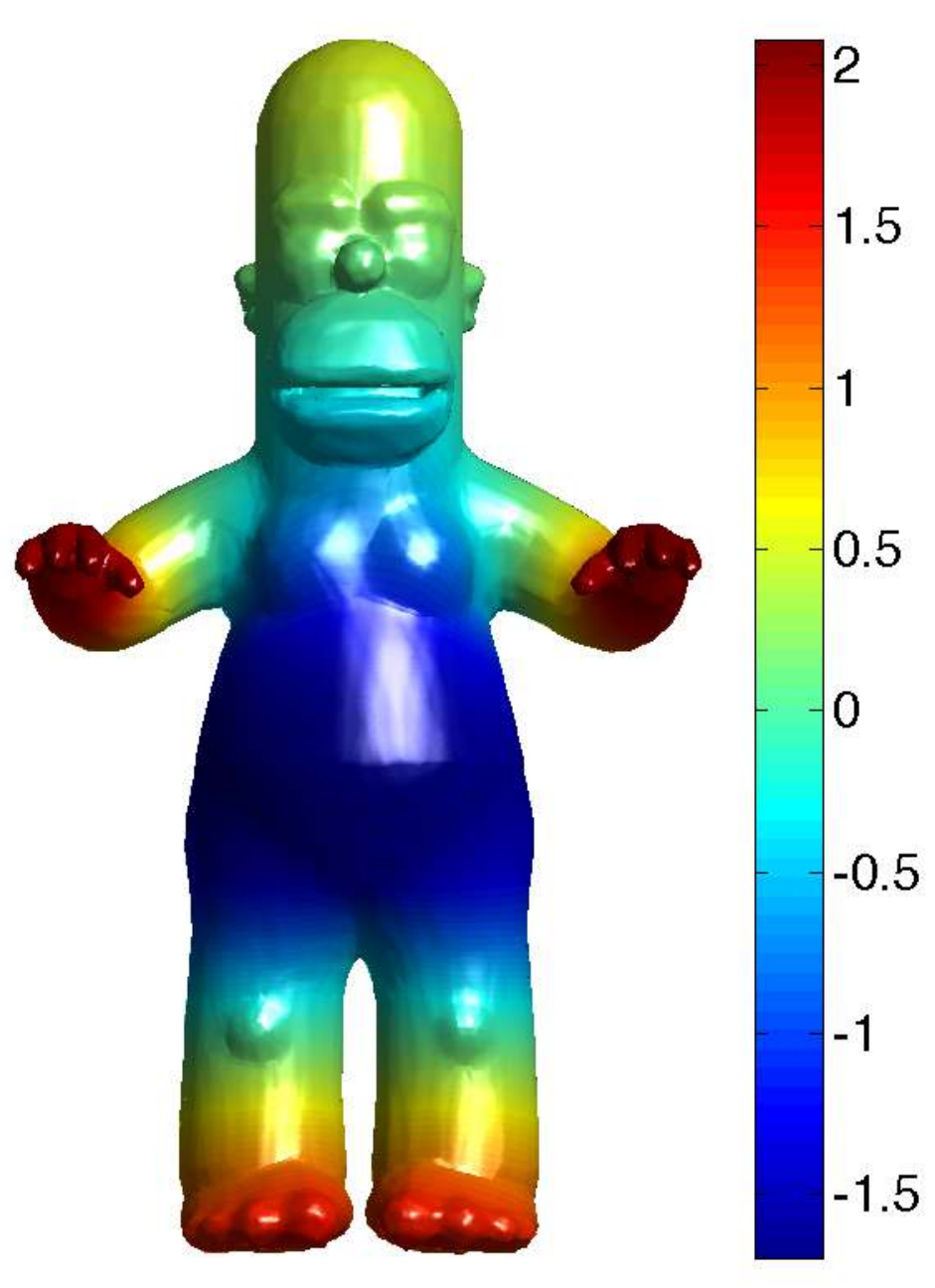}
\end{minipage}\hfill
\begin{minipage}{0.245\linewidth}
\centering $\lambda_6= 71.36$
\includegraphics[width=1\linewidth]{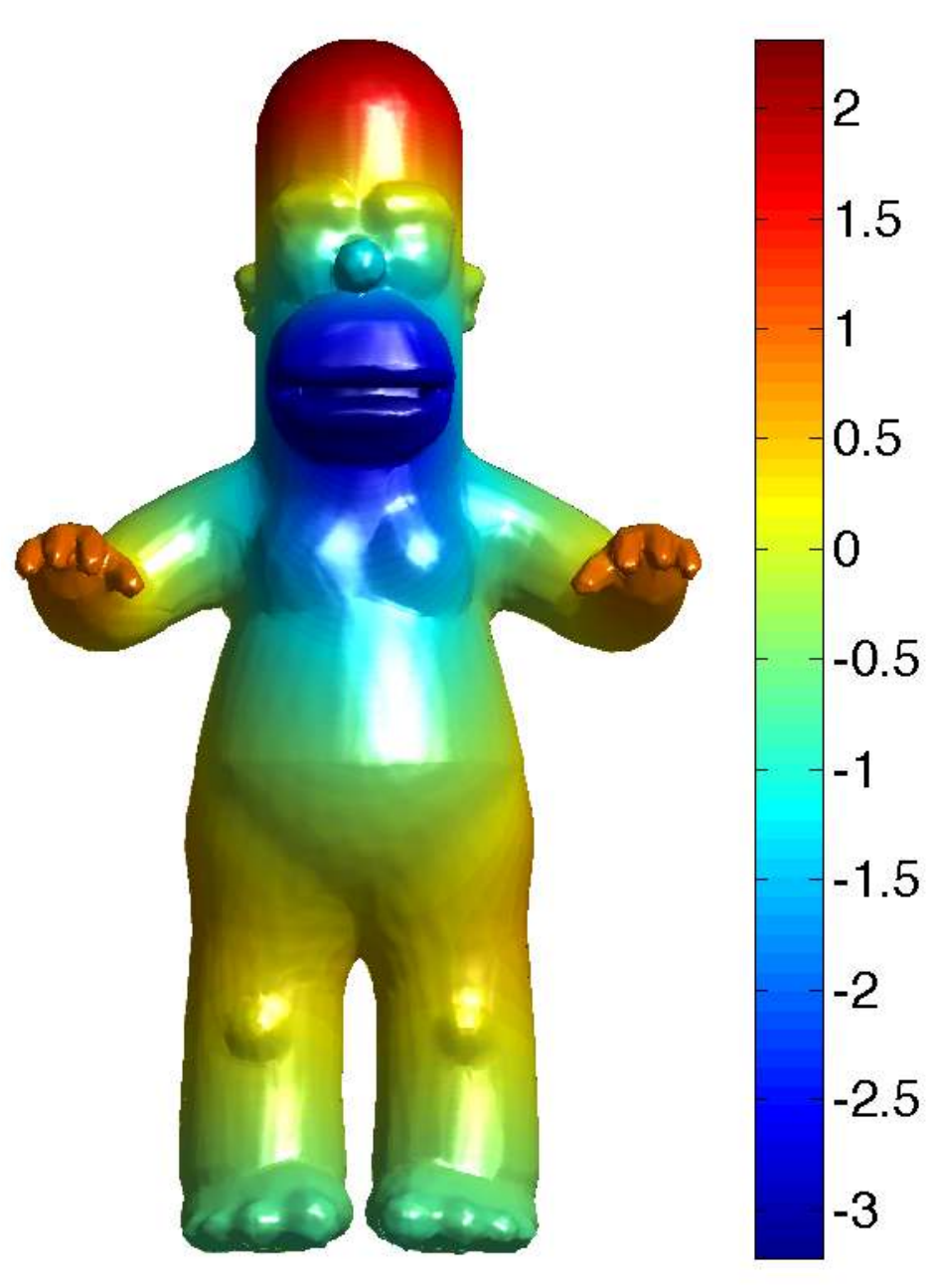}
\end{minipage}\hfill
\begin{minipage}{0.245\linewidth}
\centering $\lambda_7 = 87.92$
\includegraphics[width=1\linewidth]{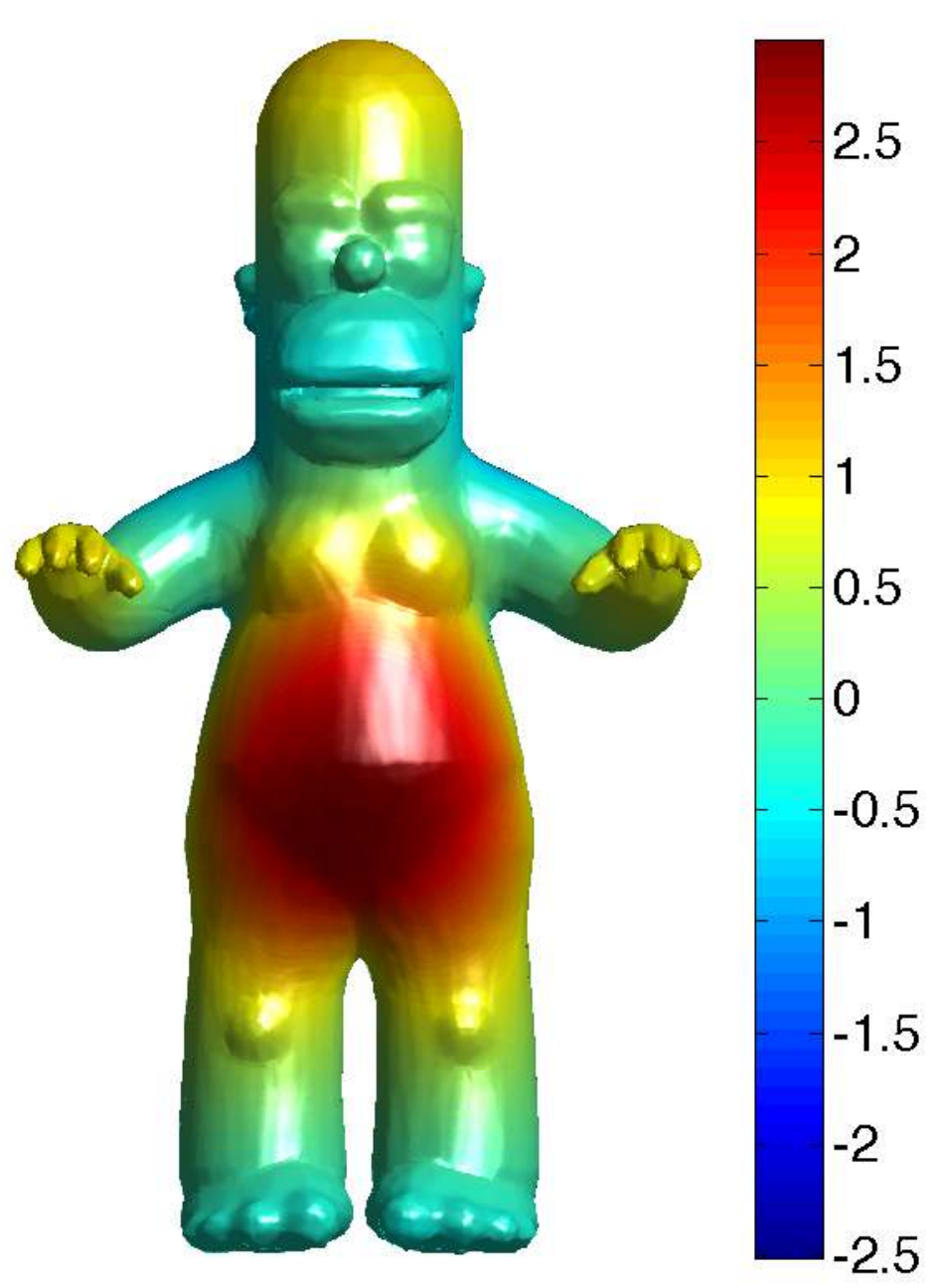}
\end{minipage}\hfill
\begin{minipage}{0.245\linewidth}
\centering $\lambda_8 = 95.38$
\includegraphics[width=1\linewidth]{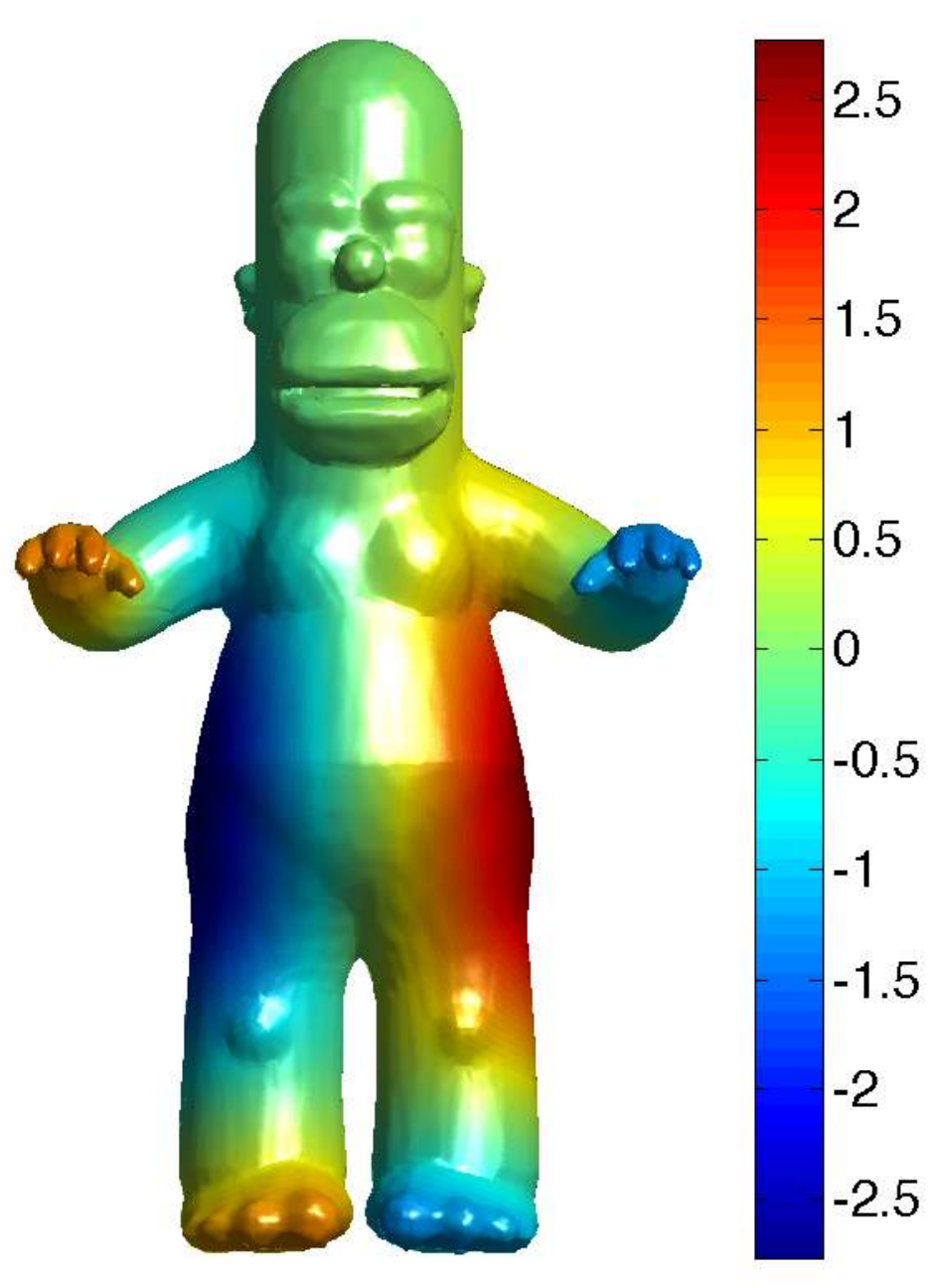}
\end{minipage}\hfill
\caption{The first eight nontrivial Laplace-Beltrami eigenfunctions on the ``Homer Simpson'' mesh. 
 See \S\ref{sec:homer}. }
\label{fig:Homer}
\end{center}
\end{figure}

\subsubsection*{Spectral Method}
For eigenvalue computations on the torus,  we use a spectral method \cite{trefethen2000}, which we briefly discuss here. 
 We use the transformation, given in \eqref{eq:LinToriTransf} and illustrated in Figure \ref{fig:coordDefs}, that takes the $(a,b)$-flat torus to the $[0,2\pi]^2$ square. The Laplace-Beltrami operator, $\Delta_{a,b}$,  on the square is defined in \eqref{eq:Delab}. 
Thus, we seek solutions to the eigenvalue problem 
\begin{equation}
\label{eq:specMethEigEq}
\Delta_{a,b} \psi = \lambda \omega \psi
\end{equation}
defined on the $[0,2\pi]^2$ square with periodic boundary conditions. The discrete operators obtained by spectral collocation   for the first and second derivatives on a one-dimensional periodic grid on $[0,2\pi]$ with (even) $N$  points are represented by the Toeplitz matrices
$$
D = \begin{pmatrix} 
0  & &&&& - \frac{1}{2} \cot\frac{1 h}{2} \\
-\frac{1}{2}\cot \frac{1 h}{2}  & \ddots && \ddots & & + \frac{1}{2} \cot\frac{2 h}{2} \\
+\frac{1}{2}\cot \frac{2 h}{2}  &  & \ddots && & - \frac{1}{2} \cot\frac{3 h}{2} \\
-\frac{1}{2}\cot \frac{3 h}{2}  &  && \ddots& & \vdots \\
\vdots  &  & \ddots && \ddots& + \frac{1}{2} \cot\frac{1 h}{2} \\
+\frac{1}{2}\cot \frac{1 h}{2}  &  &&& & 0 
 \end{pmatrix} 
 $$
 $$
D^{(2)} =  \begin{pmatrix} 
\ddots & \vdots & \\
\ddots & - \frac{1}{2} \csc^2\left( \frac{2 h }{2} \right)& \\
\ddots & + \frac{1}{2} \csc^2\left( \frac{1 h }{2} \right)& \\
& - \frac{\pi^2}{3 h^2} - \frac{1}{6}& \\
 & + \frac{1}{2} \csc^2\left( \frac{1 h }{2}\right) &\ddots \\
 & - \frac{1}{2} \csc^2\left( \frac{2 h }{2} \right)&\ddots \\
 & \vdots & \ddots
\end{pmatrix}.
$$
Here, $h = \frac{2 \pi}{N}$. See, for example, \cite[Ch. 3]{trefethen2000}.
The two-dimensional operators are then easily obtained from $D$ and $D^{(2)}$ using the Kronecker product, $\otimes$. That is, if $I$ represents the $N\times N$ identity matrix, then 
$$
D^{(2)}_{x,x} = D^{(2)}  \otimes I, 
\qquad 
D^{(2)}_{x,y} = \frac{1}{2} \left( (I \otimes D)*(D\otimes I) + (D\otimes I)*(I\otimes D)  \right), 
\quad  \text{and} \quad 
D^{(2)}_{y,y} = I \otimes D^{(2)},
$$ 
are $N^2 \times N^2$  discrete approximations to $\partial_x^2$, $\partial_{x,y}^2$, and $\partial_y^2$ respectively. 
A discrete approximation to \eqref{eq:specMethEigEq} is then given by 
$$
 \frac{4 \pi^2}{b^2} \left[ (a^2+b^2) D^{(2)}_{x,x} -2 a D^{(2)}_{x,y}+ D^{(2)}_{y,y} \right] v = \lambda \Omega v , \quad v \in \mathbb R^{N^2}. 
$$
Here $\Omega$ is a diagonal matrix with entries given by the values of $\omega$. 
This generalized eigenvalue problem is then solved using Matlab's built-in function \verb+eigs+ with default convergence criteria. In Figure \ref{fig:SpecConv}, we give a log-log plot of the relative error of the first 16 eigenvalues for the conformal factor given by  $\omega(x,y) = e^{\cos x+\cos y} $ on the equilateral torus. The method is seen to be spectrally convergent. 

\begin{figure}[t]
\begin{center}
\includegraphics[width=.7\linewidth]{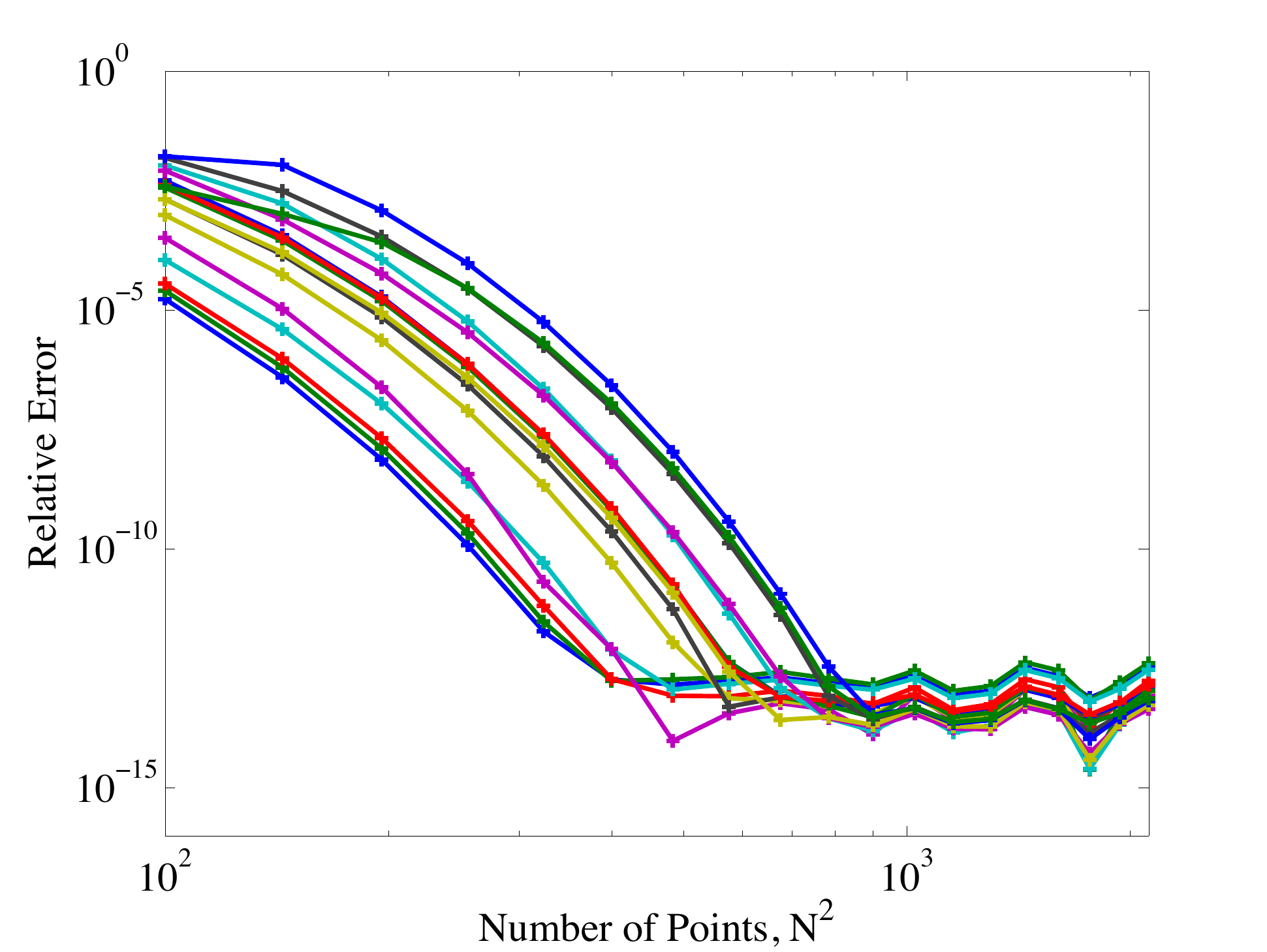}
\caption{Relative error  of the spectral method for computing  Laplace-Beltrami eigenvalues on a torus. Similar to Figure  \ref{fig:RateofConv}, each of the 16 curves in this figure represents one eigenvalue. (Lower eigenvalues are more accurate.) See \S\ref{sec:homer}.}
\label{fig:SpecConv}
\end{center}
\end{figure}

\subsection{Gradient flow of conformal factor and moduli space parameterization} \label{sec:gradientFlow}
Here, we apply Proposition \ref{prop:Hadamard} to the eigenvalues of the sphere and $(a,b)$-flat torus. The results are stated as propositions  for reference. First, consider 
 the mapping $\omega \mapsto \lambda_k(\omega)$ satisfying 
$$
- \Delta \ \psi = \omega \ \lambda(\omega) \  \psi \qquad \text{on } \   \mathbb S^2.
$$

\begin{prpstn}
\label{prop:S2derivs}
Let $\lambda(\omega)$ be a simple eigenvalue of $(\mathbb S^2, \omega g_0)$ and corresponding eigenfunction $\psi$ normalized such that $\langle \psi,\psi \rangle_{\omega g_0} =1$. Then,
$$
\frac{\delta \lambda} {\delta \omega} \cdot \delta \omega = - \lambda  \langle \psi^2 \omega^{-1}, \delta \omega \rangle_{\omega g_0},
$$
where $\langle f,h\rangle_{\omega g_0} =  \int_{\mathbb S^2} f h \omega d\mu_{g_0} $.
\end{prpstn}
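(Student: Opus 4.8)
The plan is to apply Proposition \ref{prop:Hadamard} directly to the special case $M = \mathbb S^2$, $g = g_0$, $n = 2$. The general $n=2$ formula \eqref{eq:Hadamard2} already gives
$$
\left\langle \frac{\delta \lambda}{\delta \omega}, \delta \omega \right\rangle_{\omega g_0} = -\lambda \frac{\langle \omega^{-1} \psi^2, \delta\omega \rangle_{\omega g_0}}{\langle \psi, \psi\rangle_{\omega g_0}},
$$
so the work is essentially bookkeeping: substitute the $\mathbb S^2$ metric and the stated normalization into this identity.

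First I would recall that in the Proposition \ref{prop:S2derivs} setup, the eigenpair satisfies $-\Delta \psi = \omega \lambda(\omega) \psi$ on $\mathbb S^2$, where $\Delta = \Delta_{\mathbb S^2, g_0}$. Using the identity $\Delta_{M, \omega g_0} = \frac{1}{\omega} \Delta_{M, g_0}$ (valid for smooth positive $\omega$, and noted in the excerpt just before \eqref{eq:GenEigs}), this is exactly the statement that $(\lambda, \psi)$ is a simple eigenpair of $-\Delta_{\mathbb S^2, \omega g_0}$, so Proposition \ref{prop:Hadamard} applies with $g = g_0$. Second, I would impose the normalization $\langle \psi, \psi \rangle_{\omega g_0} = 1$, which kills the denominator in \eqref{eq:Hadamard2}, leaving
$$
\frac{\delta \lambda}{\delta \omega} \cdot \delta\omega = -\lambda \, \langle \omega^{-1} \psi^2, \delta\omega \rangle_{\omega g_0}.
$$
Finally I would unwind the inner product notation: since $d\mu_{\omega g_0} = \omega \, d\mu_{g_0}$ for a surface, $\langle f, h \rangle_{\omega g_0} = \int_{\mathbb S^2} f h \, \omega \, d\mu_{g_0}$, which is precisely the bracket defined in the statement. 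This matches the claimed formula term for term.

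There is essentially no obstacle here — the proposition is a direct specialization of Proposition \ref{prop:Hadamard}. The only point requiring a sentence of care is confirming that the eigenvalue normalization used in Proposition \ref{prop:S2derivs} (with respect to the $\omega g_0$ inner product) is the one that makes the denominator $\langle \psi, \psi\rangle_{\omega g_0}$ in \eqref{eq:Hadamard2} equal to $1$, and noting that simplicity of $\lambda$ is exactly the hypothesis needed for Hadamard's formula to be valid (eigenvalues are only continuous, not differentiable, at crossings). So the proof is two or three lines: invoke Proposition \ref{prop:Hadamard}, specialize to $n = 2$ and the sphere, apply the normalization, and rewrite the $\omega g_0$-inner product explicitly.
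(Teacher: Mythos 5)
Your proposal is correct and follows exactly the paper's route: Proposition \ref{prop:S2derivs} is stated in \S\ref{sec:gradientFlow} precisely as a direct specialization of Proposition \ref{prop:Hadamard} (the $n=2$ formula \eqref{eq:Hadamard2}) with the normalization $\langle \psi,\psi\rangle_{\omega g_0}=1$ absorbing the denominator and $d\mu_{\omega g_0}=\omega\, d\mu_{g_0}$ giving the stated inner product. Nothing is missing; your remarks on simplicity and on the normalization are exactly the points of care the paper relies on.
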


We next compute the gradient of a Laplace-Beltrami eigenvalue on the $(a,b)$-flat tori with respect to both the conformal factor $\omega$ and the parameters $a$ and $b$. Recall the linear transformation introduced in \S\ref{sec:FlatTori} which takes the $[0,2\pi]^2$ square to the $(a,b)$-flat torus (see Figure  \ref{fig:coordDefs}).  
Consider the mapping $(a,b,\omega) \mapsto \lambda_k(a,b,\omega)$ satisfying 
\begin{equation}
\label{eq:FTeig}
- \Delta_{a,b} \ \psi = \omega \ \lambda(a,b,\omega) \  \psi \qquad \text{on } \   [0,2\pi]^2.
\end{equation}
where $ \Delta_{a,b}$ is defined in  \eqref{eq:Delab}. 

\begin{prpstn}
\label{prop:FTderivs}
Let $\lambda(a,b,\omega)$ be a simple eigenvalue of an $(a,b)$-flat torus with conformal factor $\omega$ and corresponding eigenfunction $\psi$ normalized such that $\langle \psi ,   \psi  \rangle_{\omega g_0}=1$. Then,
\begin{align*}
\frac{\partial \lambda} {\partial  a} &= -  \langle \psi, \omega^{-1} \Delta_a \psi \rangle_{\omega g_0} , \qquad \Delta_a :=  \frac{4 \pi^2  }{b^2} \left[ 2 a \partial_x^2 - 2 \partial_x \partial_y \right] \\
\frac{\partial \lambda} {\partial  b} &= -  \langle \psi, \omega^{-1} \Delta_b \psi \rangle_{\omega g_0}, \qquad \Delta_b := \frac{2  \lambda \omega(x,y) }{b} +  \frac{8 \pi^2}{b}   \partial_x^2 \\
\frac{\delta \lambda} {\delta \omega} \cdot \delta \omega &= - \lambda  \langle \psi^2 \omega^{-1}, \delta \omega \rangle_{\omega g_0},
\end{align*}
where $\langle \cdot, \cdot \rangle_{\omega g_0}$ is the  inner product  induced by the metric, 
$$
\langle f,h\rangle_{\omega g_0} = \int_M f h d\mu_g = \int_{[0,2\pi]^2} f h \sqrt{|g|} dx dy = \frac{b}{4\pi^2} \int_{[0,2\pi]^2} f(x,y) h(x,y) \omega(x,y) dx dy.
$$
\end{prpstn}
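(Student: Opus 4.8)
The plan is to obtain all three formulas by first-order perturbation theory. The conformal-factor variation is immediate: the operator $\Delta_{a,b}$ of \eqref{eq:Delab} is precisely the Laplace--Beltrami operator of the flat metric $g_0$ on the square $[0,2\pi]^2$ introduced in \S\ref{sec:FlatTori} (pulled back through the isometry of \eqref{eq:LinToriTransf}), so Proposition~\ref{prop:Hadamard} applies verbatim. Specializing equation~\eqref{eq:Hadamard2} to $n=2$ and using the normalization $\langle\psi,\psi\rangle_{\omega g_0}=1$ to cancel the denominator gives $\frac{\delta\lambda}{\delta\omega}\cdot\delta\omega = -\lambda\,\langle\psi^2\omega^{-1},\delta\omega\rangle_{\omega g_0}$ at once.

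For $\partial\lambda/\partial a$ and $\partial\lambda/\partial b$ I would run a Hellmann--Feynman argument on the square. The starting point is that the constant-coefficient operator $-\Delta_{a,b}$ is self-adjoint with respect to the unweighted inner product $\int_{[0,2\pi]^2}\,dx\,dy$ on periodic functions (integrate by parts twice; the torus has no boundary). Since $\lambda$ is a simple eigenvalue, standard analytic perturbation theory produces, near the base point, a differentiable family $(\lambda(p),\psi(p))$ for $p\in\{a,b\}$, with $\psi$ normalized by $\langle\psi,\psi\rangle_{\omega g_0}=1$ at the base point and $\partial_p\psi$ in the form domain. Differentiating \eqref{eq:FTeig}, $-\Delta_{a,b}\psi=\lambda\omega\psi$, in $p$ (noting $\omega$ is independent of $a,b$), pairing with $\psi$ in $L^2(dx\,dy)$, and using self-adjointness of $\Delta_{a,b}$ together with the eigenvalue equation to eliminate the two terms containing $\partial_p\psi$, one is left with $-\int_{[0,2\pi]^2}(\partial_p\Delta_{a,b})\psi\cdot\psi\,dx\,dy = (\partial_p\lambda)\int_{[0,2\pi]^2}\omega\psi^2\,dx\,dy$. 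Because $\int_{[0,2\pi]^2}\omega\psi^2\,dx\,dy = \frac{4\pi^2}{b}\langle\psi,\psi\rangle_{\omega g_0} = \frac{4\pi^2}{b}$, this rearranges to $\partial_p\lambda = -\langle\psi,\ \omega^{-1}(\partial_p\Delta_{a,b})\psi\rangle_{\omega g_0}$.

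It then remains to differentiate the explicit expression \eqref{eq:Delab}. Differentiating in $a$ gives directly $\partial_a\Delta_{a,b} = \frac{4\pi^2}{b^2}\left[2a\partial_x^2 - 2\partial_x\partial_y\right] = \Delta_a$, which yields the first formula. Differentiating in $b$ gives $\partial_b\Delta_{a,b} = -\frac{2}{b}\Delta_{a,b} + \frac{8\pi^2}{b}\partial_x^2$; applying this to the eigenfunction and substituting $\Delta_{a,b}\psi = -\lambda\omega\psi$ turns it into $\partial_b\Delta_{a,b}\,\psi = \frac{2\lambda\omega}{b}\psi + \frac{8\pi^2}{b}\partial_x^2\psi = \Delta_b\psi$, where $\Delta_b$ is exactly the operator named in the statement (it carries $\lambda$ and $\omega$ because it is $\partial_b\Delta_{a,b}$ rewritten on the eigenpair). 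This gives the second formula.

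The one genuinely delicate point is the differentiability of the eigenpair in $(a,b)$: this is exactly where simplicity of $\lambda$ is used, and I would invoke Kato--Rellich analytic perturbation theory for the analytic family $(a,b)\mapsto\Delta_{a,b}$ (equivalently, for the associated family of quadratic forms), which also legitimizes the integrations by parts in the Hellmann--Feynman step. Everything past that is bookkeeping between the flat $L^2(dx\,dy)$ pairing and the weighted pairing $\langle\cdot,\cdot\rangle_{\omega g_0} = \frac{b}{4\pi^2}\int_{[0,2\pi]^2}\cdot\ \omega\,dx\,dy$.
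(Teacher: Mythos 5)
Your proof is correct and matches the paper's intended derivation: the paper states this proposition without a written proof, presenting it as an application of Proposition \ref{prop:Hadamard} for the $\omega$-variation together with exactly the first-order (Hellmann--Feynman) perturbation of \eqref{eq:FTeig} in $a$ and $b$ that you carry out. Your algebra checks out, including the identity $\partial_b\Delta_{a,b}\,\psi=\tfrac{2\lambda\omega}{b}\psi+\tfrac{8\pi^2}{b}\partial_x^2\psi$ obtained via the eigenvalue equation and the $\tfrac{b}{4\pi^2}$ bookkeeping between the flat and weighted inner products.
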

All computations  for the flat torus using the spectral method are  done on the domain $[0,2\pi]^2$ (with periodic boundary conditions). Eigenvalue derivatives are computed numerally using the formulae in  Proposition \ref{prop:FTderivs}. The operators $\Delta_a$ and $\Delta_b$ are implemented using the Toeplitz matrices, $D$ and $D^{(2)}$, and the Kronecker product as discussed in \S\ref{sec:homer}. 

Finally, we can use Proposition   \ref{prop:FTderivs} and  the relationships 
\begin{align*}
\partial_x  = \frac{1}{2 \pi} \partial_u \quad \text{and} \quad 
\partial_y  =  \frac{a}{2 \pi} \partial_u + \frac{b}{2\pi} \partial_v, 
\end{align*}
to push these derivatives forward from the square to the flat torus (see Figure  \ref{fig:coordDefs}).  
We obtain the following result, which is used in the finite element computations on flat tori. 
\begin{prpstn} \label{prop:FTderivs2}
Let $\lambda(a,b,\omega)$ be a simple eigenvalue of an $(a,b)$-flat torus with conformal factor $\omega$ and corresponding eigenfunction $\psi$ normalized such that $\langle \psi ,  \psi  \rangle_{\omega g_0} =1$. Then,
\begin{align*}
\frac{\partial \lambda} {\partial  a} &= -   \langle \psi, \omega^{-1} \Delta_a \psi \rangle_{\omega g_0} , \qquad \Delta_a := -  \frac{2  }{b}  \partial_u \partial_v  \\
\frac{\partial \lambda} {\partial  b} &= -  \langle \psi, \omega^{-1} \Delta_b \psi \rangle_{\omega g_0}, \qquad \Delta_b := \frac{2  \lambda \omega}{b} + \frac{2 }{b} \partial_u^2 \\
\frac{\delta \lambda} {\delta \omega} \cdot \delta \omega &= - \lambda  \langle \psi^2 \omega^{-1}, \delta \omega \rangle_{\omega g_0}, 
\end{align*}
where  $\langle f,h\rangle_{\omega g_0} =  \int_{\mathbb T^2} f h \omega d\mu_{g_0} $ is the  inner product on the flat torus. 
\end{prpstn}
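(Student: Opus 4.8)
The plan is to deduce Proposition~\ref{prop:FTderivs2} directly from Proposition~\ref{prop:FTderivs} by transporting each of its three identities from the square $[0,2\pi]^2$ to the $(a,b)$-flat torus along the linear map \eqref{eq:LinToriTransf}. As recorded in \S\ref{sec:FlatTori}, that map is an isometry from $[0,2\pi]^2$ with the pullback metric onto the flat torus, so the eigenvalue $\lambda(a,b,\omega)$ and its $a$-, $b$- and $\omega$-derivatives are one and the same object on either side; only the coordinate expressions of the operators $\Delta_a,\Delta_b$ and of the weighted inner product change. Hence the proof reduces entirely to a change of variables, and no compactness or regularity input is needed beyond what is already available from Proposition~\ref{prop:FTderivs}.

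Concretely, I would first differentiate the coordinate change: from $\partial_x=\tfrac{1}{2\pi}\partial_u$ and $\partial_y=\tfrac{a}{2\pi}\partial_u+\tfrac{b}{2\pi}\partial_v$ one gets $\partial_x^2=\tfrac{1}{4\pi^2}\partial_u^2$ and $\partial_x\partial_y=\tfrac{a}{4\pi^2}\partial_u^2+\tfrac{b}{4\pi^2}\partial_u\partial_v$. Substituting into the operator $\Delta_a=\tfrac{4\pi^2}{b^2}(2a\partial_x^2-2\partial_x\partial_y)$ of Proposition~\ref{prop:FTderivs}, the two $\partial_u^2$ contributions cancel and one is left with $\Delta_a=-\tfrac{2}{b}\partial_u\partial_v$; likewise $\Delta_b=\tfrac{2\lambda\omega}{b}+\tfrac{8\pi^2}{b}\partial_x^2$ becomes $\tfrac{2\lambda\omega}{b}+\tfrac{2}{b}\partial_u^2$. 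The pointwise $\omega$-variation $\tfrac{\delta\lambda}{\delta\omega}\cdot\delta\omega=-\lambda\langle\psi^2\omega^{-1},\delta\omega\rangle_{\omega g_0}$ is coordinate-free in form and is carried over unchanged; here $\psi$ now denotes the eigenfunction pushed forward to the torus.

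It then remains only to match the inner products. The Jacobian of \eqref{eq:LinToriTransf} equals $\tfrac{b}{4\pi^2}$, which is exactly the area density of the flat-torus metric; consequently $\tfrac{b}{4\pi^2}\int_{[0,2\pi]^2}f h\omega\,dx\,dy=\int_{\mathbb T^2}f h\omega\,d\mu_{g_0}$, so the weighted inner product appearing in Proposition~\ref{prop:FTderivs} coincides with the one stated in Proposition~\ref{prop:FTderivs2}. Combining the three substitutions with Proposition~\ref{prop:FTderivs} yields the claim. I do not anticipate a genuine obstacle; the only points needing care are the cancellation in $\Delta_a$ and the conceptual check that differentiating on the square and then pulling back is legitimate --- which it is, precisely because $\lambda$ is an intrinsic spectral quantity and \eqref{eq:LinToriTransf} is an isometry, so the $a$-dependence hidden in the coordinate change is not double-counted.
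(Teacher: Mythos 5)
Your proposal is correct and is essentially the paper's own argument: the paper obtains Proposition~\ref{prop:FTderivs2} precisely by taking Proposition~\ref{prop:FTderivs} together with the relations $\partial_x=\tfrac{1}{2\pi}\partial_u$, $\partial_y=\tfrac{a}{2\pi}\partial_u+\tfrac{b}{2\pi}\partial_v$ and pushing the derivative formulas forward from the square to the $(a,b)$-flat torus. Your explicit verification of the cancellation in $\Delta_a$, the conversion of $\Delta_b$, and the matching of the weighted inner products via the Jacobian $\tfrac{b}{4\pi^2}$ simply spells out the computation the paper leaves implicit.
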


\section{Computations of conformal and topological spectra} \label{sec:compRes}
In this section, we compute the conformal spectrum for several manifolds and  topological spectrum for genus $\gamma=0,1$.  Numerical values of volume-normalized eigenvalues, $\Lambda_k$, are given in Table  \ref{tab:ConfofmalSpectrum} for comparison. 

\subsection{The topological spectrum of genus zero Riemannian surfaces } \label{sec:genus0}

\begin{figure}[htbp]
\begin{center}
\includegraphics[width=8cm]{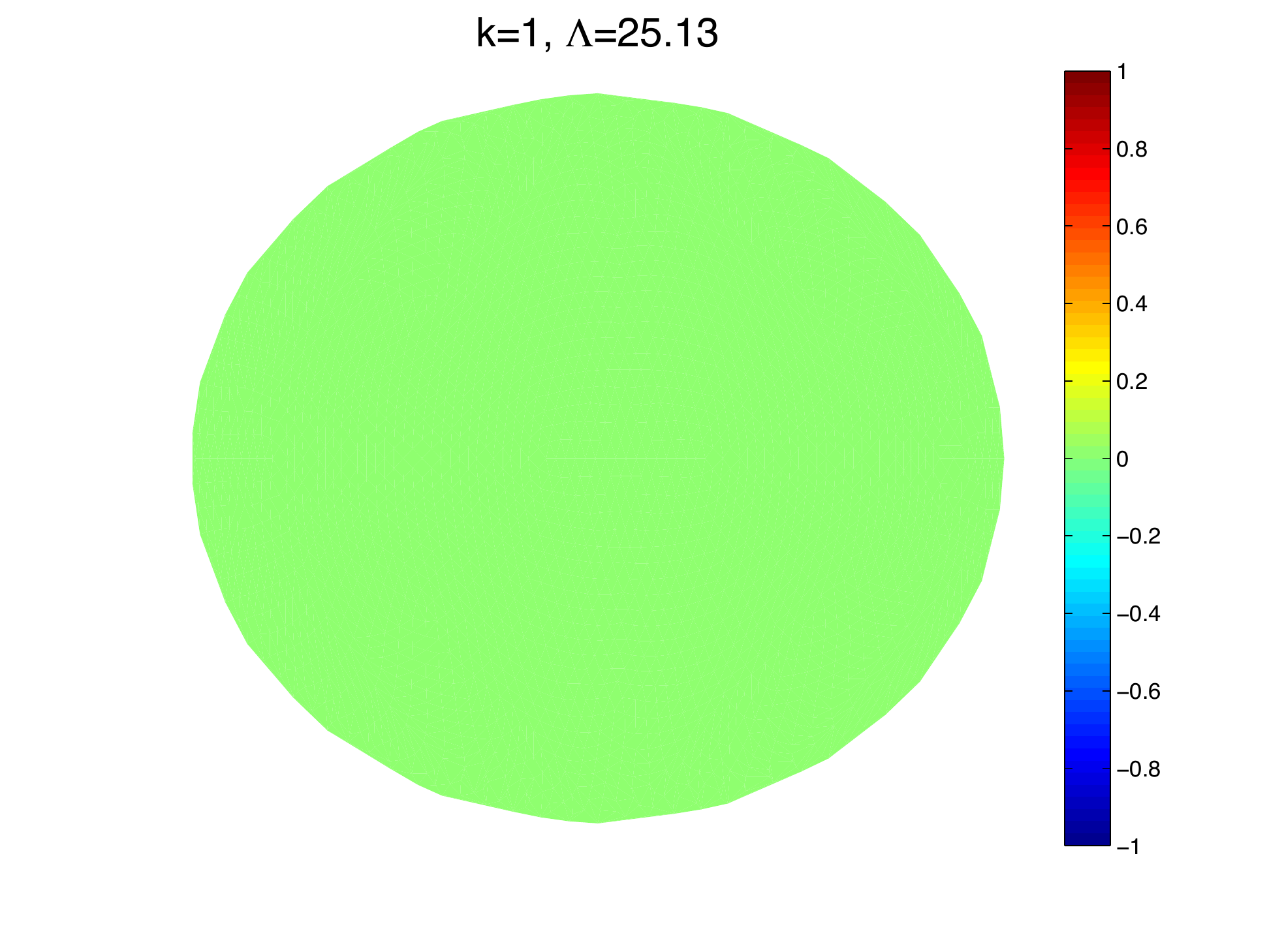} 
\includegraphics[width=8cm]{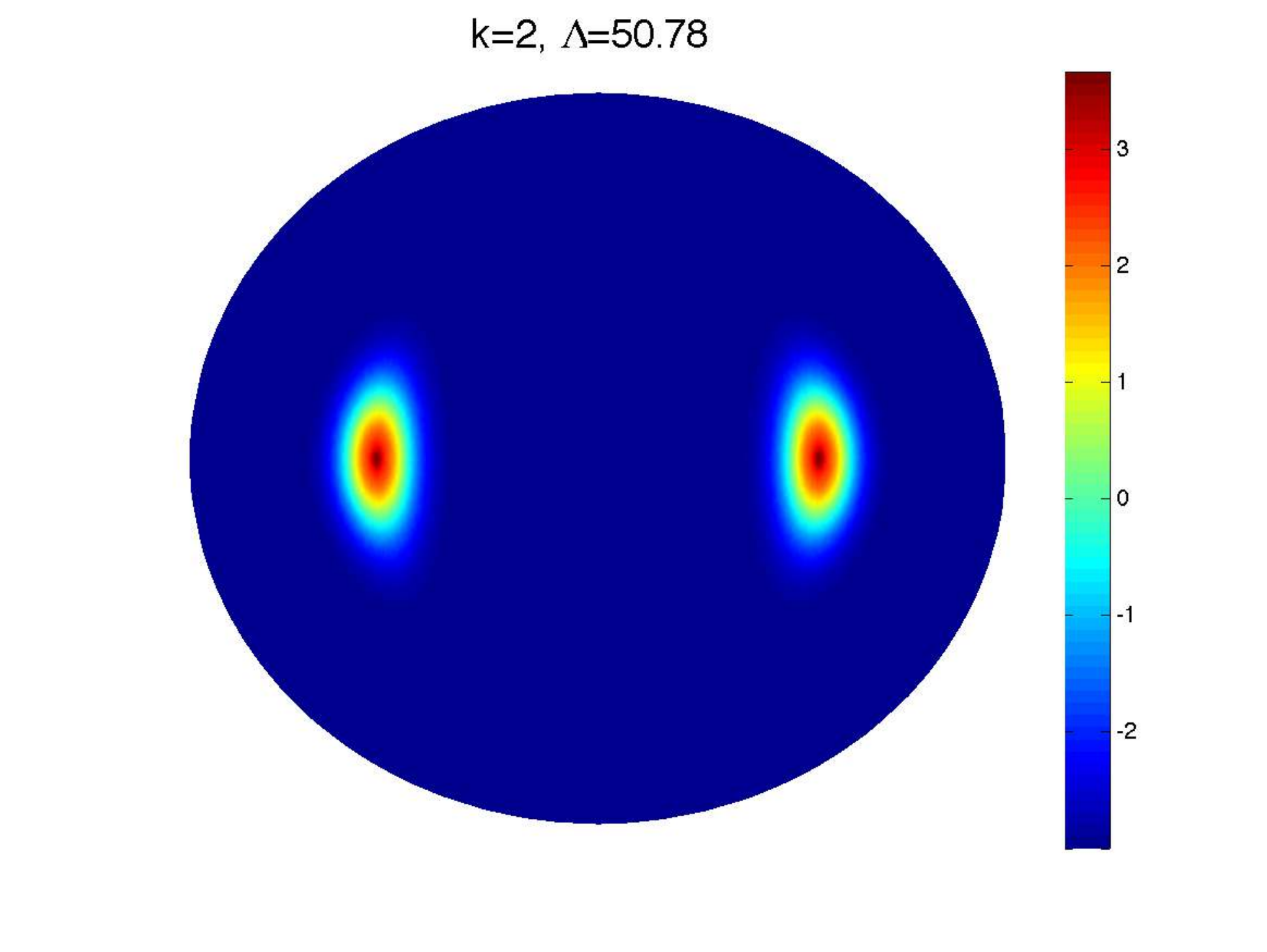} 
\includegraphics[width=8cm]{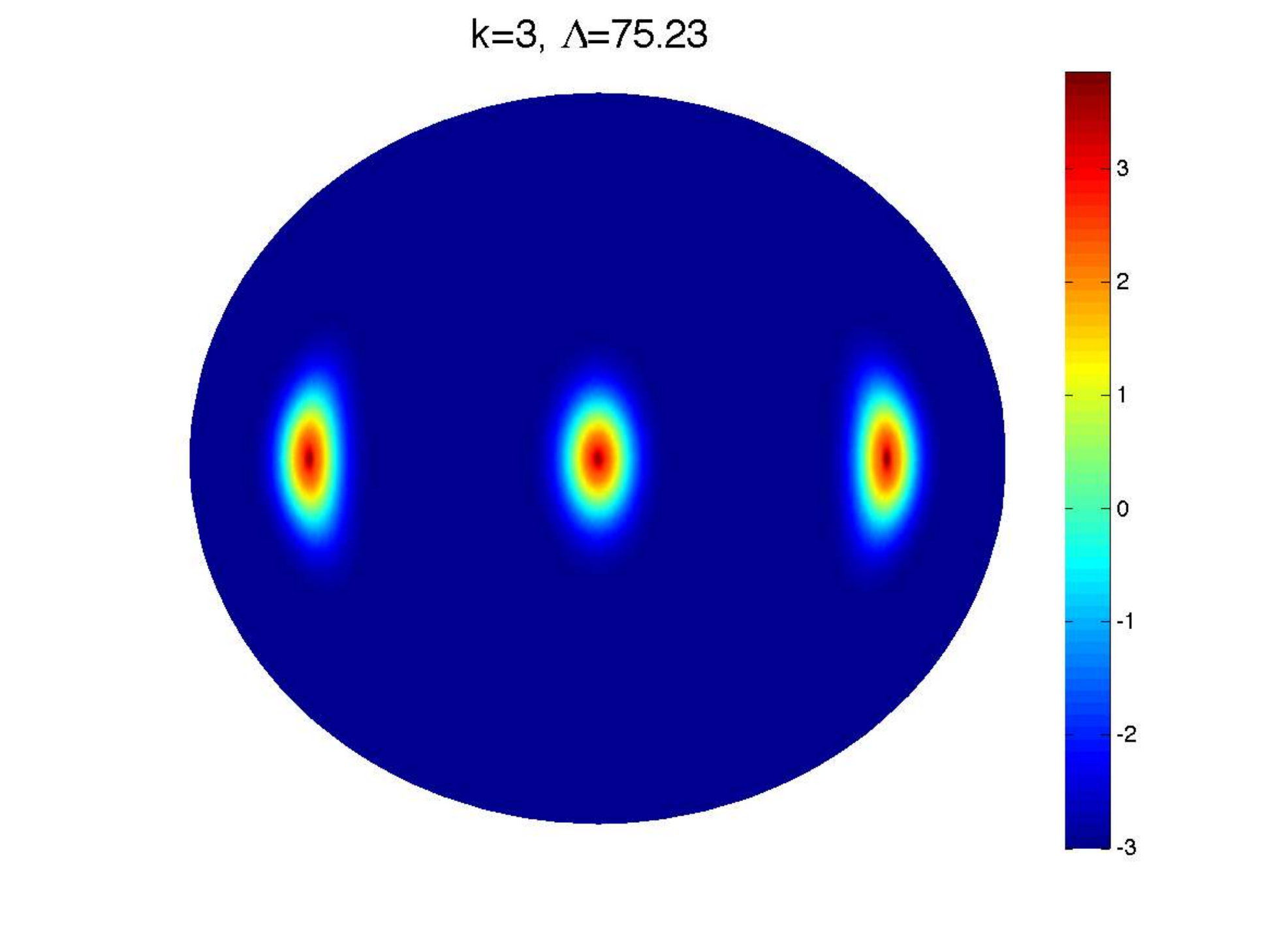} 
\includegraphics[width=8cm]{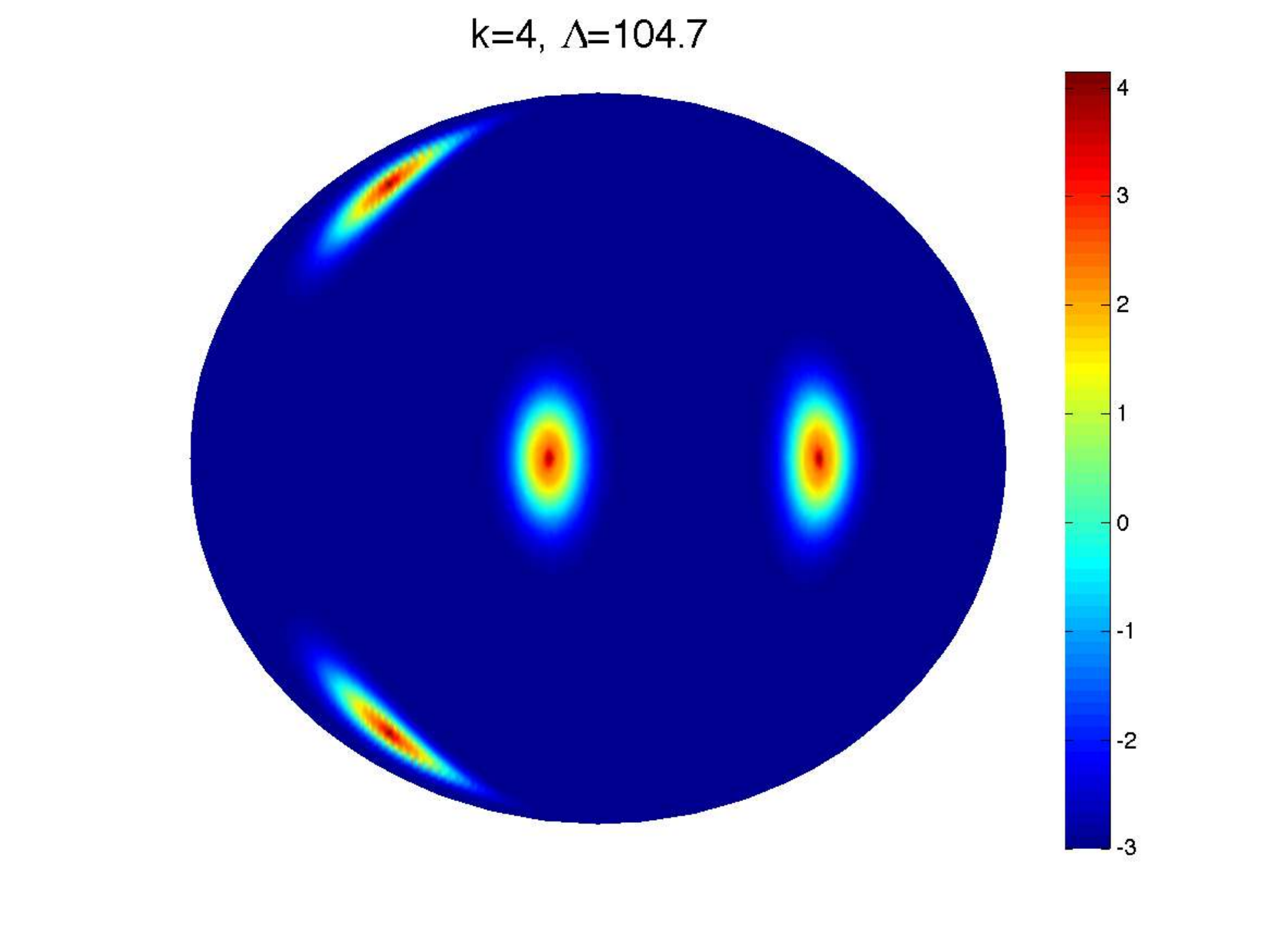} 
\includegraphics[width=8cm]{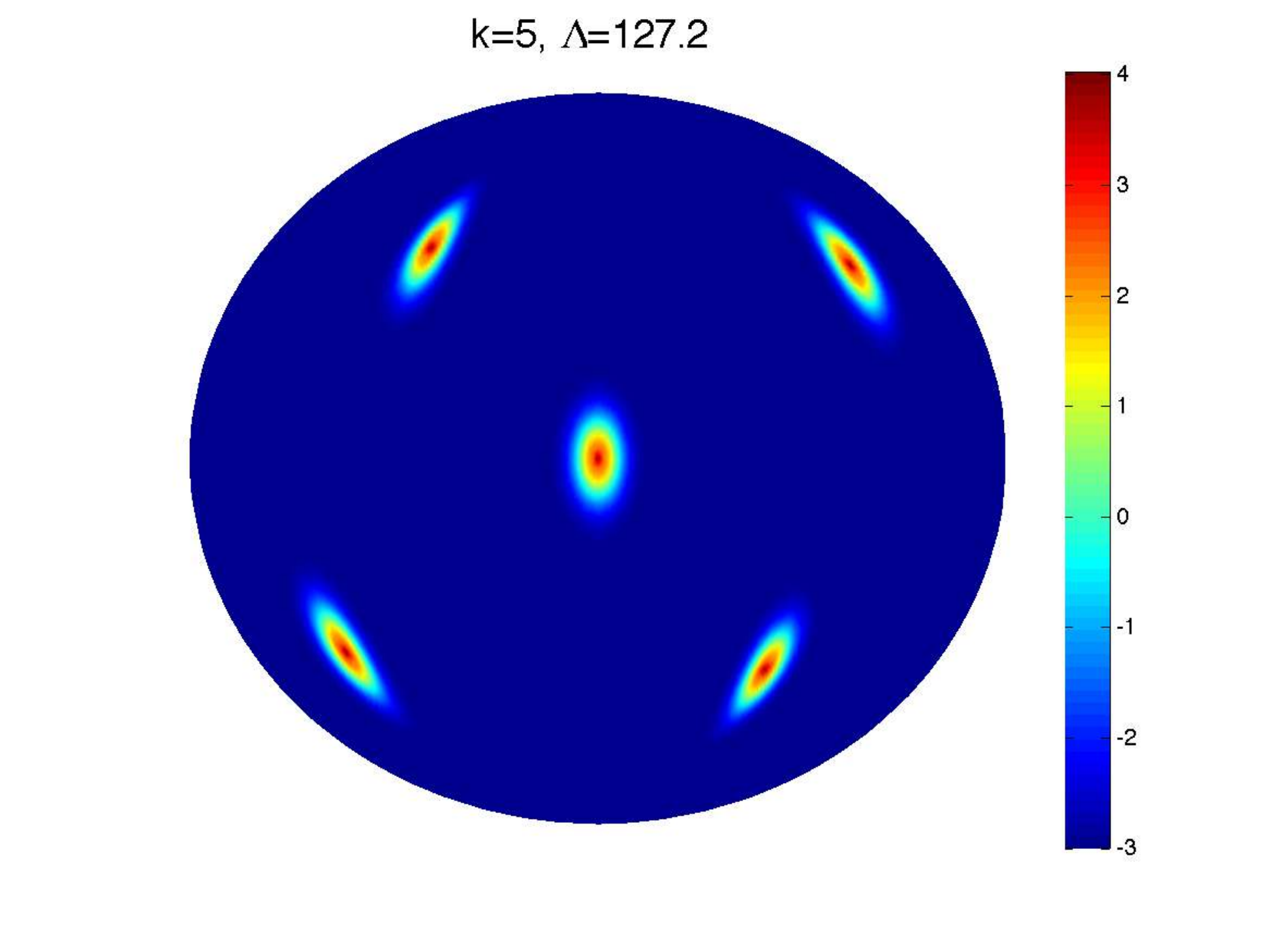} 
\includegraphics[width=8cm]{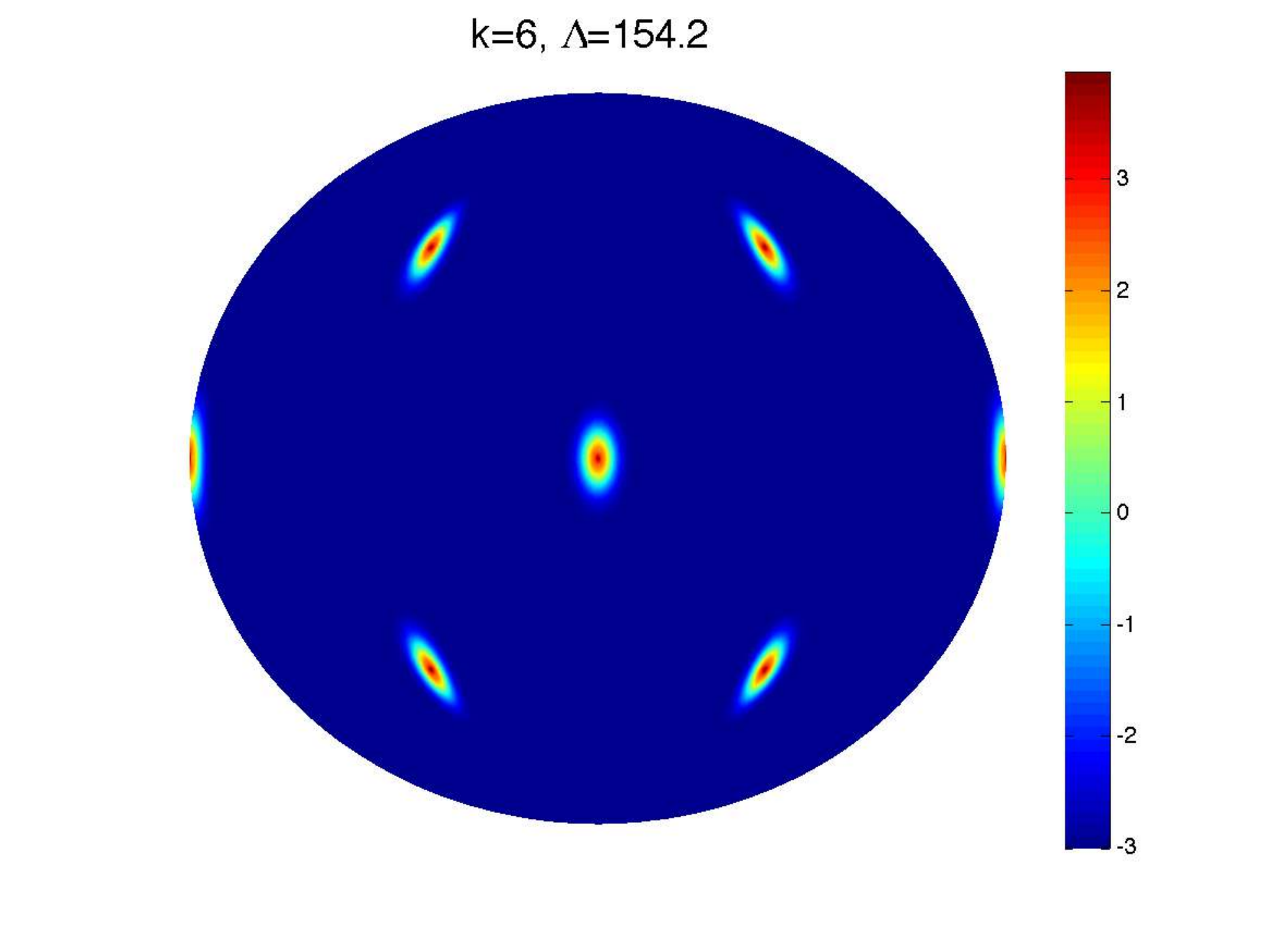} 
\caption{A Hammer projection of the best conformal factors found for $\Lambda_k$, $k=1,\ldots,6$ on the sphere. See \S\ref{sec:genus0}. }
\label{fig:optSpheres}
\end{center}
\end{figure}

\begin{figure}[h!]
\begin{center}
\begin{minipage}{0.49\linewidth}
\includegraphics[width=1\linewidth]{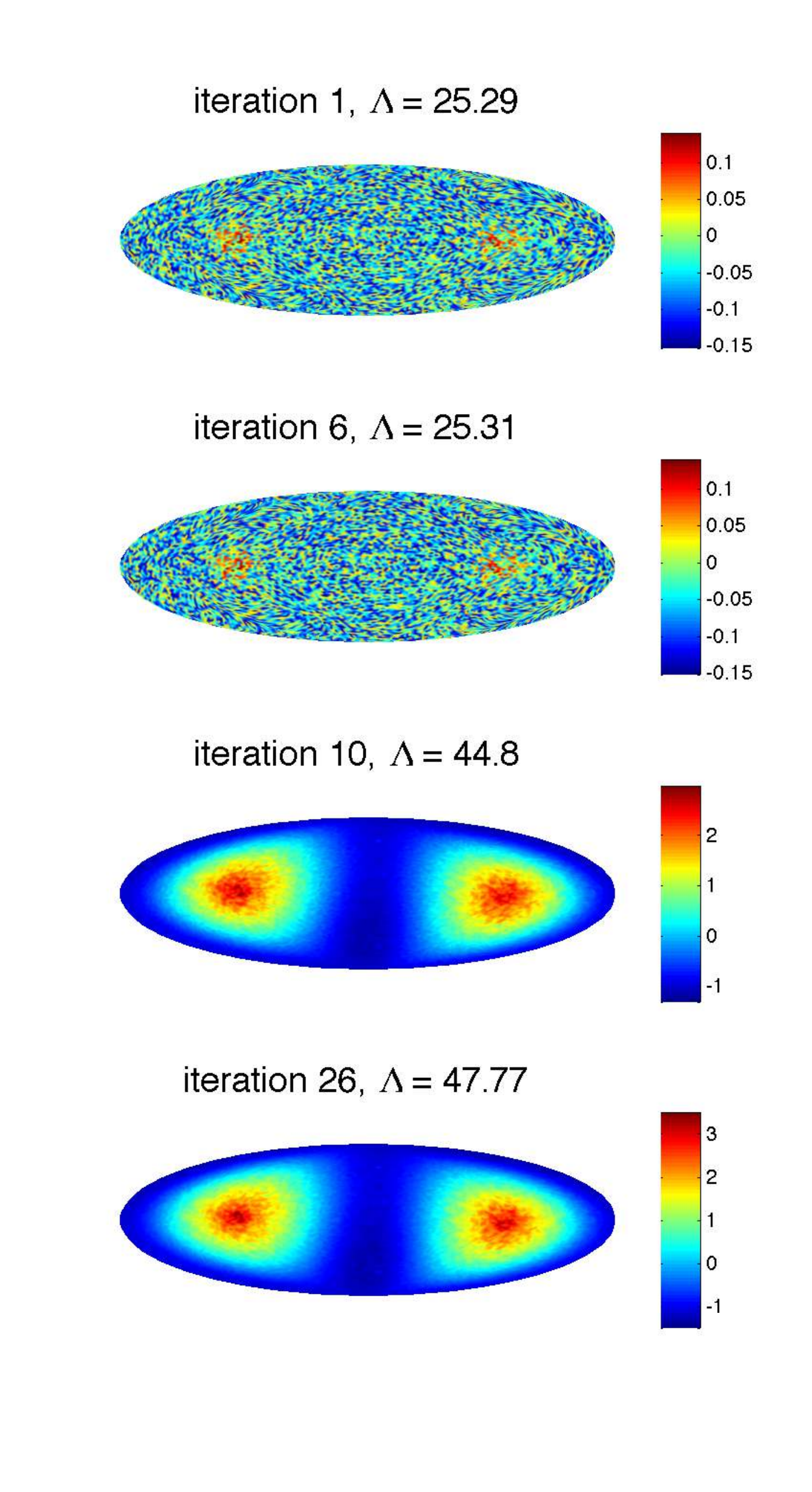}
\end{minipage}\hfill
\begin{minipage}{0.49\linewidth}
\vspace{-0.7cm}
\includegraphics[width=1\linewidth]{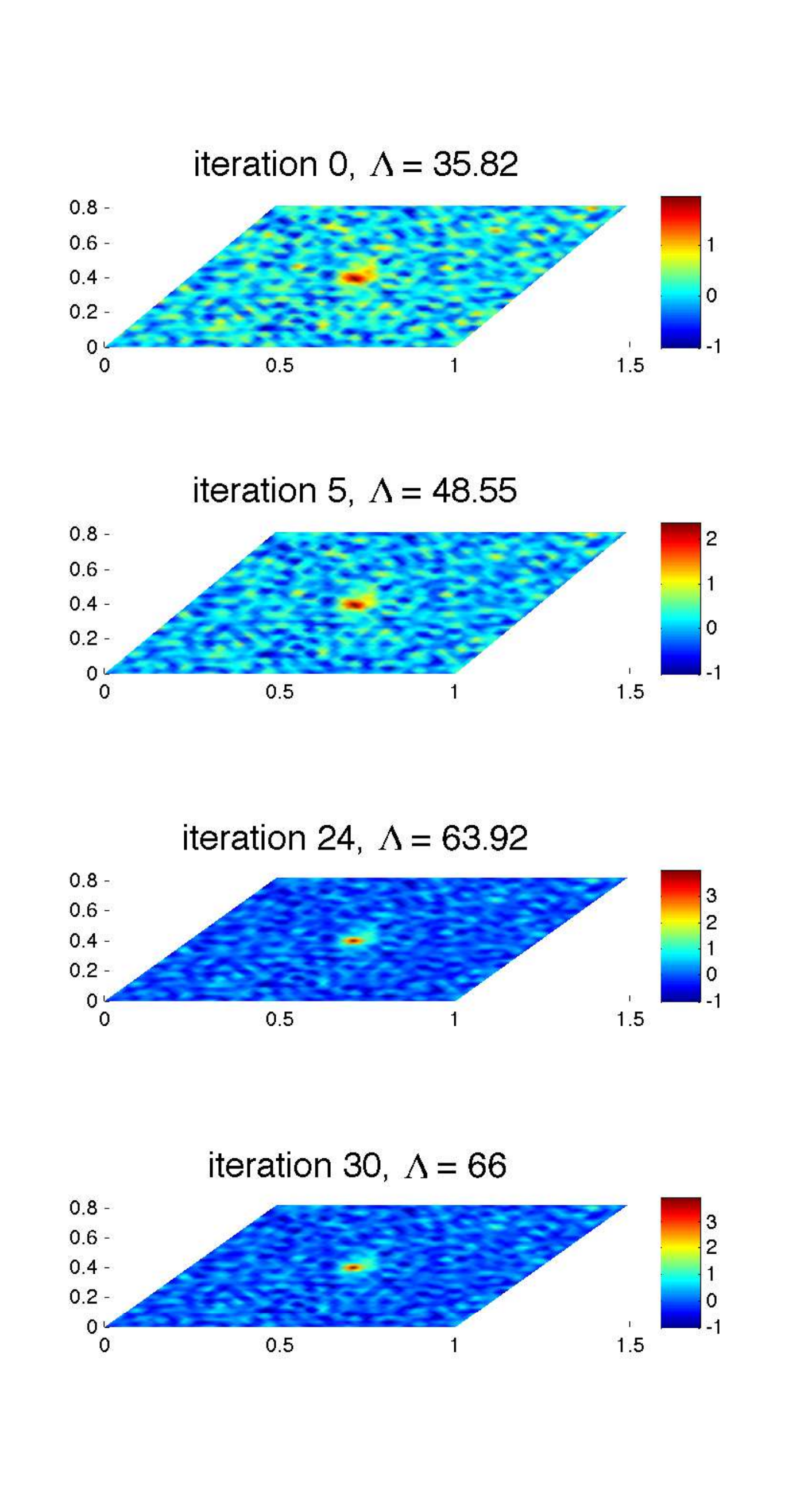}
\end{minipage}\\
\caption{Iterates of the proposed computational method. 
{\bf (left) } A sequence of conformal factors on the sphere to maximize $\Lambda_2$.  See \S\ref{sec:genus0}.  
{\bf (right) } A sequence of tori  to maximize  $\Lambda_1$. See \S\ref{sec:TopSpecOne}. }
\label{fig:sphereSequence}
\end{center}
\end{figure}

By the Uniformization Theorem, any closed genus-0 Riemannian surface $(M,g)$ is conformal to $\mathbb \mathbb S^2$ with the canonical metric of constant sectional curvature, $g_0$ \cite{Imayoshi1992}. In other words, the moduli space of closed Riemannian surfaces consists of one point and the conformal spectrum for any genus $\gamma=0$ Riemannian surface is identical. In particular, for any genus zero  Riemannian surface, $(M,g)$, 
$$
\Lambda^c_k(M,[g]) = \Lambda^c_k(\mathbb S^2, [g_0]) = \Lambda^t_k(0).
$$

In this section, we approximate $\Lambda^c_k(\mathbb S^2, g_0)$ using the computational methods described in \S\ref{sec:compMeth}. 
To compute the Laplace-Beltrami eigenvalues, we use the finite element method on a mesh of the sphere with $40,962$ vertices. The optimization problem is solved using a quasi-Newton  method, where the gradient of the eigenvalues is computed via Proposition  \ref{prop:S2derivs}. 

The best conformal factors found for $k=1, 2, \cdots,6$ are presented in Figure  \ref{fig:optSpheres} and the corresponding numerical values given in  Table \ref{tab:ConfofmalSpectrum}. For this computational experiment, we have chosen many different initializations for the conformal factors. The initial conditions used for Figure  \ref{fig:optSpheres} were the sum of localized Gaussians located at  points  equidistributed on the sphere.  To further illustrate our computational method, we consider a randomly initialized conformal factor.  In Figure  \ref{fig:sphereSequence}(left), we plot for $k=2$, the 1st, 6th, 10th, and 26th iterates of the conformal factor. The mesh of the sphere used here has 10,242 vertices. The optimization code is only able to achieve a value of $\Lambda=47.77$ for this grid size and initial condition, however the general pattern of the conformal factor having two localized maxima is clearly observed. 

Hersch's result that the standard metric on $\mathbb S^2$ is the only metric up to isometry attaining $\Lambda^t_1(0)$ is supported in the computational results \cite{hersch1970}.  This numerical result gives just one representative from the isometric class; see Remark \ref{rem:isom}, where a  conformal factor on the sphere, isometric to the uniform conformal factor, is constructed that gives the same first topological eigenvalue. 
For $k=2$, it was shown in \cite{Nadirashvili2002}  that the maximum is approached by a sequence of surfaces degenerating to a union of two identical round spheres, a configuration we refer to as two kissing spheres,  with second eigenvalue $\Lambda^t_2(1)= 16\pi \approx 50.26$. The value $\Lambda_2^\star = 50.78$, obtained numerically is slightly larger. As discussed in \S\ref{sec:homer}, the finite element method used overestimates the Laplace-Beltrami eigenvalues and hence the value of the maximum. After having solved this problem on a sequence of increasingly fine meshes, we believe that this small discrepancy is the result of numerical error. The conformal factor on $\mathbb S^2$ corresponding to ``two kissing spheres'' is  the one shown in the top right panel of  Figure  \ref{fig:optSpheres}. 

From  Figure  \ref{fig:optSpheres}, we further observe that the $k$-th eigenvalue is large precisely when the metric has $k$ localized regions with large value. This corresponds to the ``$k$-kissing spheres'' surface as described in \S\ref{sec:KissingSpheres}. Observe that for increasingly large $k$, the regions where the metric is localized is increasingly small. Since it is possible for the eigenfunctions to become very concentrated at these  regions of concentrated measure, we reason that for larger values of $k$, to improve accuracy we should use a finer mesh at these regions or, equivalently, deform the surface at these points to locally enlarge the volume. We choose the later option, and consider a mesh consisting of $k$ spheres ``glued'' together which approximates $k$ kissing spheres. For example, to construct the mesh for $k=2$, we remove one element (triangle) from the mesh representing each sphere and then identify the edges associated with the missing faces of the two punctured  balls. On those glued meshes, we again maximize $\Lambda_1$ as a function of the conformal factor, $\omega$. The best conformal factor found for $k=1,\ldots,6$ is plotted in Figure  \ref{fig:KissingSpheres}. In each case, the conformal factor is very flat and the optimal values obtained are very close to $ 8 \pi k$.

To further test these optimal conformal factors, we consider  configurations of spheres with different sizes; see Figure \ref{fig:KissingSpheres2}. 
For $\Lambda_1$, we consider a mesh approximating a sphere with radius $1/2$. 
For $\Lambda_2$ to $\Lambda_6$, we consider meshes approximating glued spheres. The larger spheres have radius 1 and the smaller spheres have radius $1/2$. 
In each case,  we verified that the constructed surface has genus $\gamma = 0$ using the Euler characteristic of the mesh. 
In each case, the conformal factor is very flat on each sphere and the optimal values obtained are very close to $ 8 \pi k$.

\begin{figure}[t]
\begin{center}
\begin{minipage}{0.33\linewidth}
\centering $\Lambda_1 = 25.13$
\end{minipage}\hfill
\begin{minipage}{0.33\linewidth}
\centering $\Lambda_2 = 50.28$
\end{minipage}\hfill
\begin{minipage}{0.33\linewidth}
\centering $\Lambda_3 = 75.39$
\end{minipage}\hfill\\
\begin{minipage}{0.33\linewidth}
\includegraphics[width=.9\linewidth]{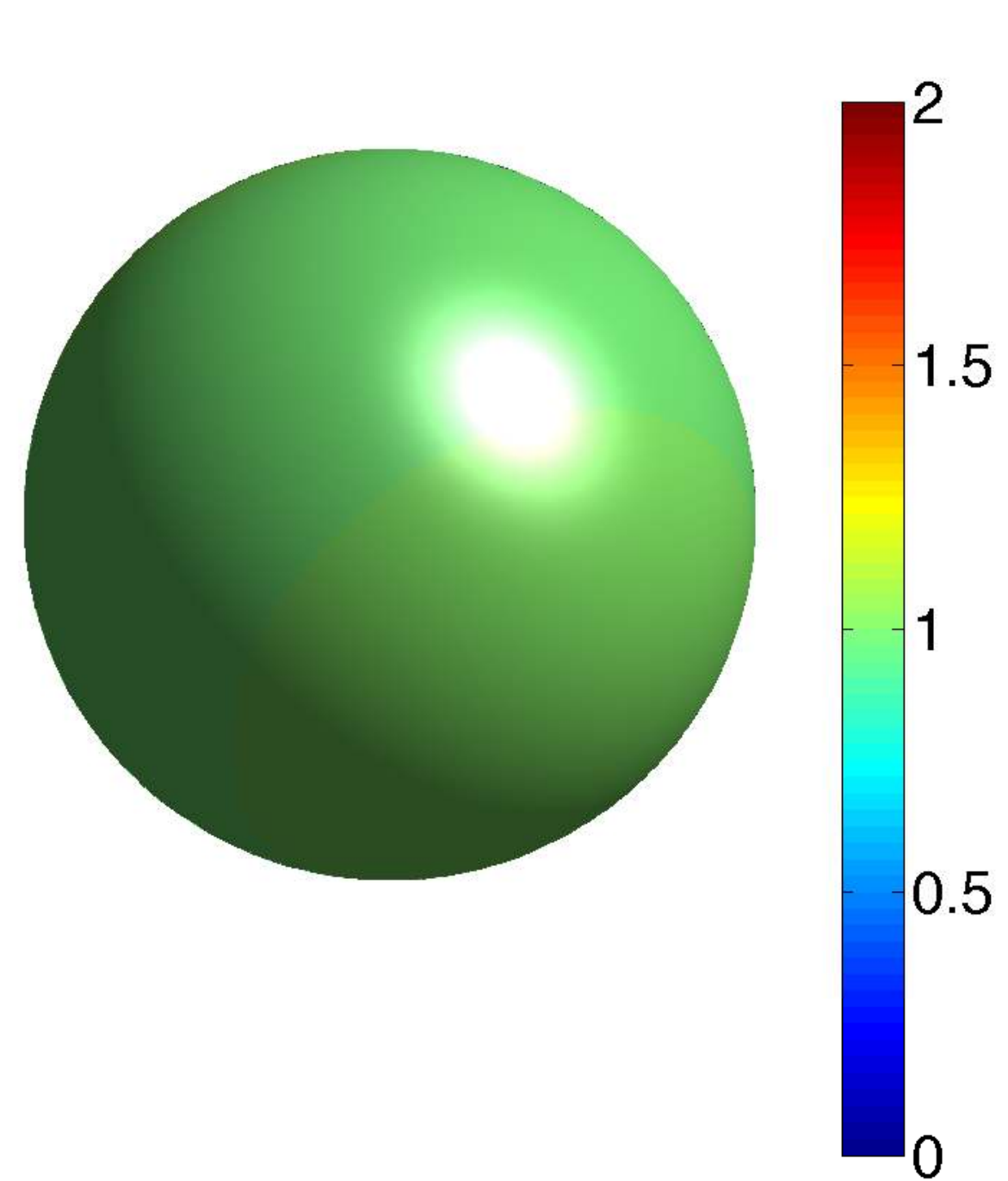}
\end{minipage}\hfill
\begin{minipage}{0.33\linewidth}
\includegraphics[width=1\linewidth]{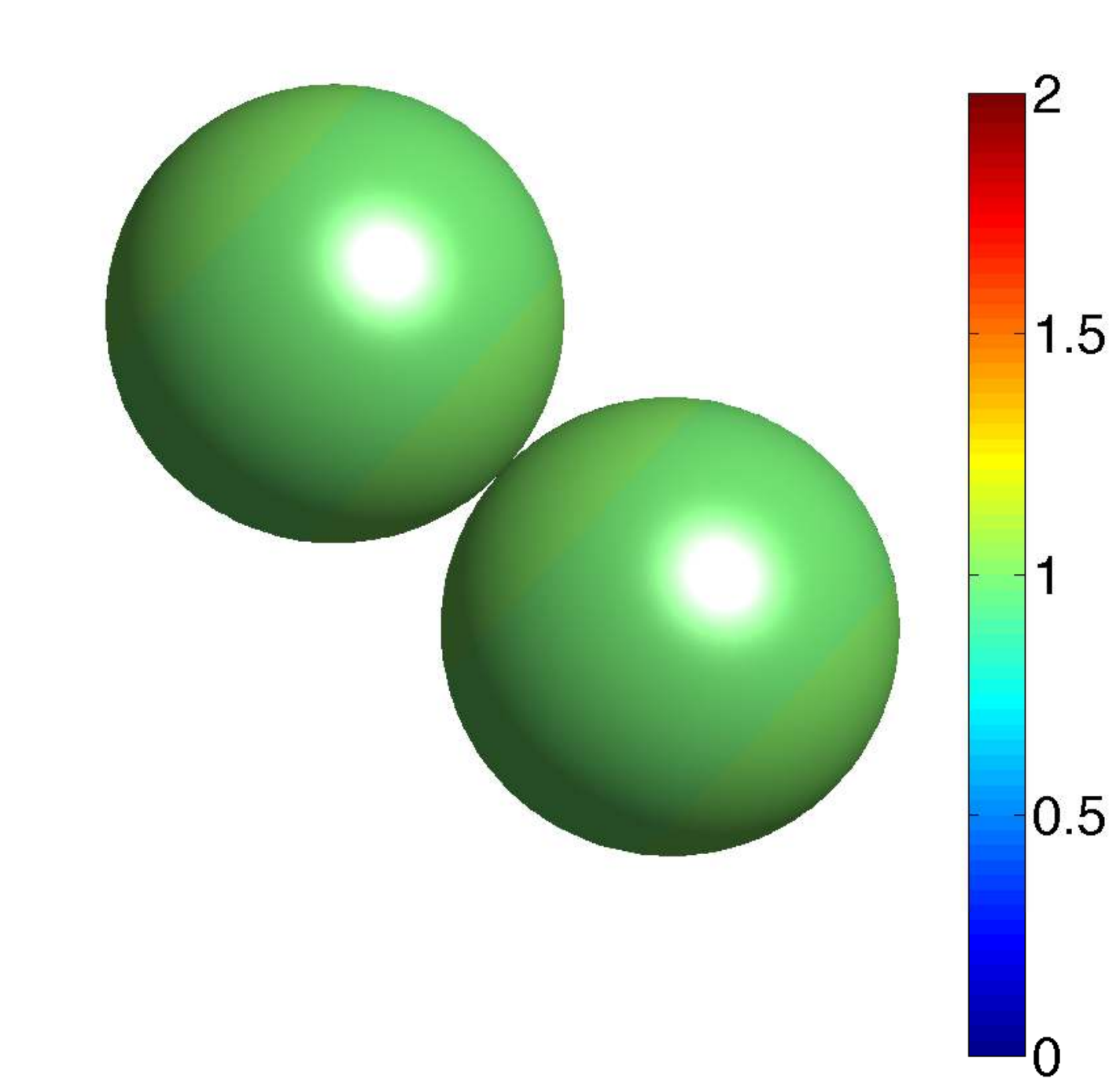}
\end{minipage}\hfill
\begin{minipage}{0.33\linewidth}
\includegraphics[width=1\linewidth]{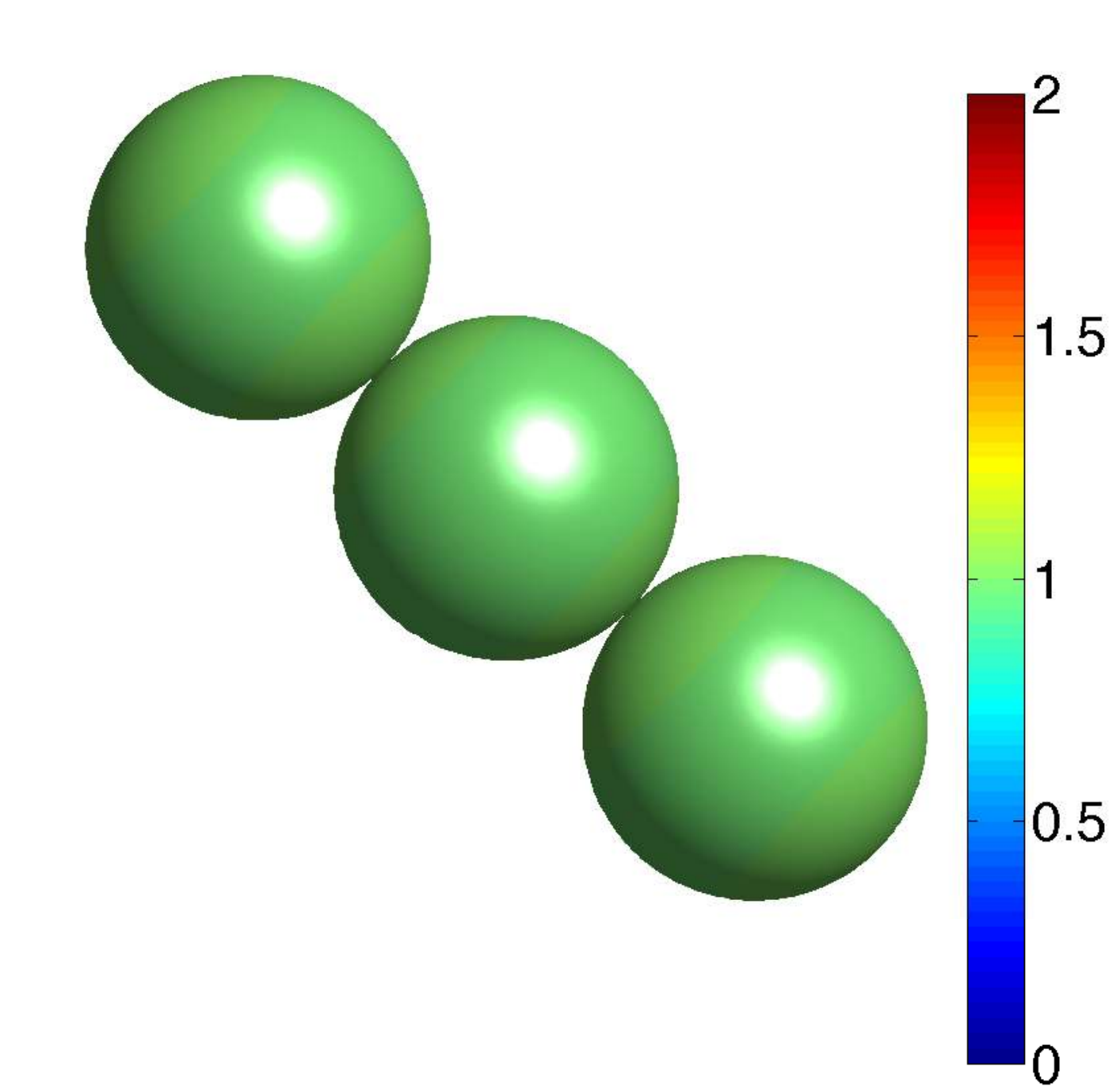}
\end{minipage}\hfill\\
\vspace{0.5cm}
\begin{minipage}{0.33\linewidth}
\centering $\Lambda_4 = 100.52$
\end{minipage}\hfill
\begin{minipage}{0.33\linewidth}
\centering $\Lambda_5 = 125.66$
\end{minipage}\hfill
\begin{minipage}{0.33\linewidth}
\centering $\Lambda_6 = 150.76$
\end{minipage}\hfill\\
\begin{minipage}{0.33\linewidth}
\includegraphics[width=1\linewidth]{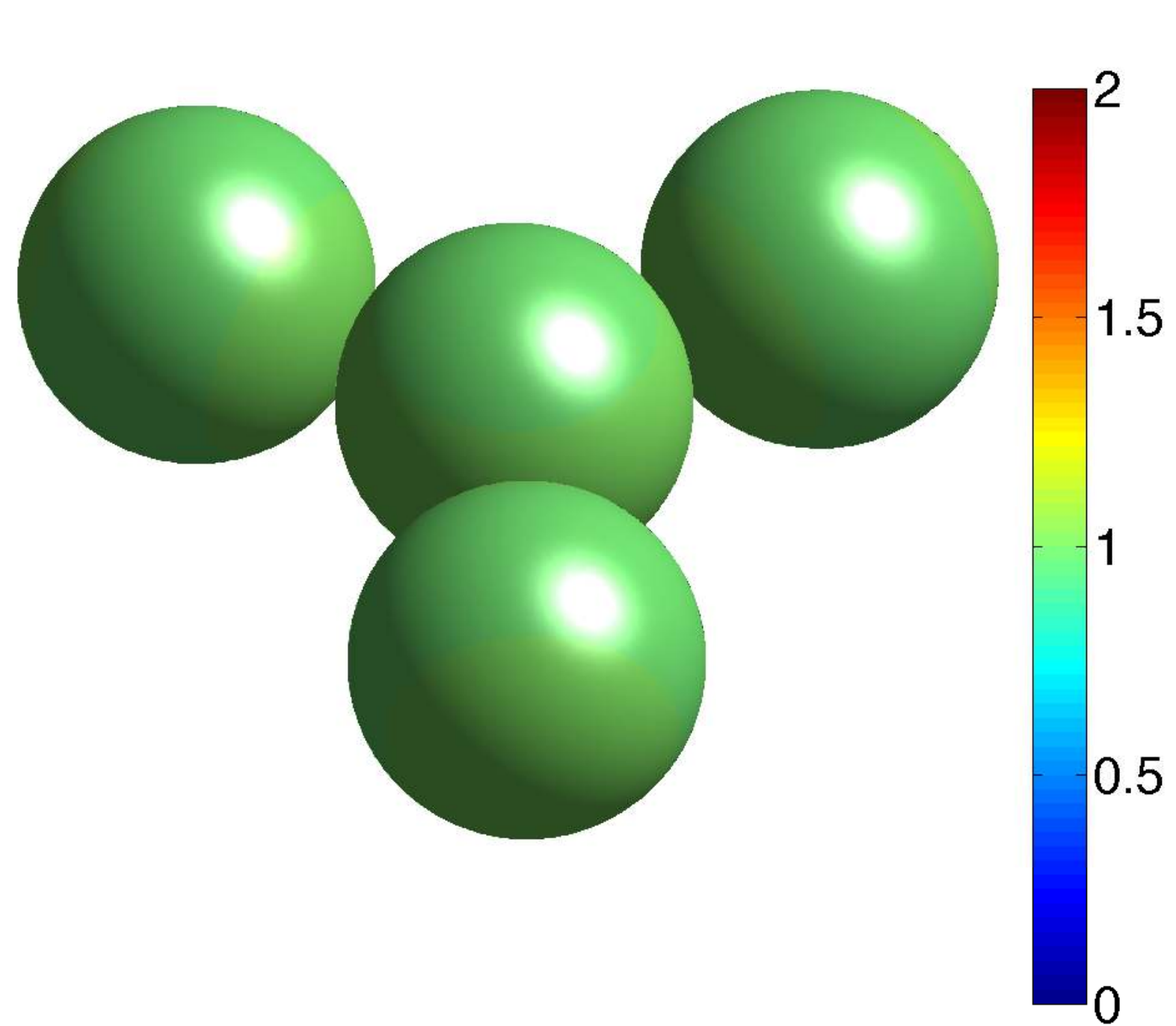}
\end{minipage}\hfill
\begin{minipage}{0.33\linewidth}
\includegraphics[width=1\linewidth]{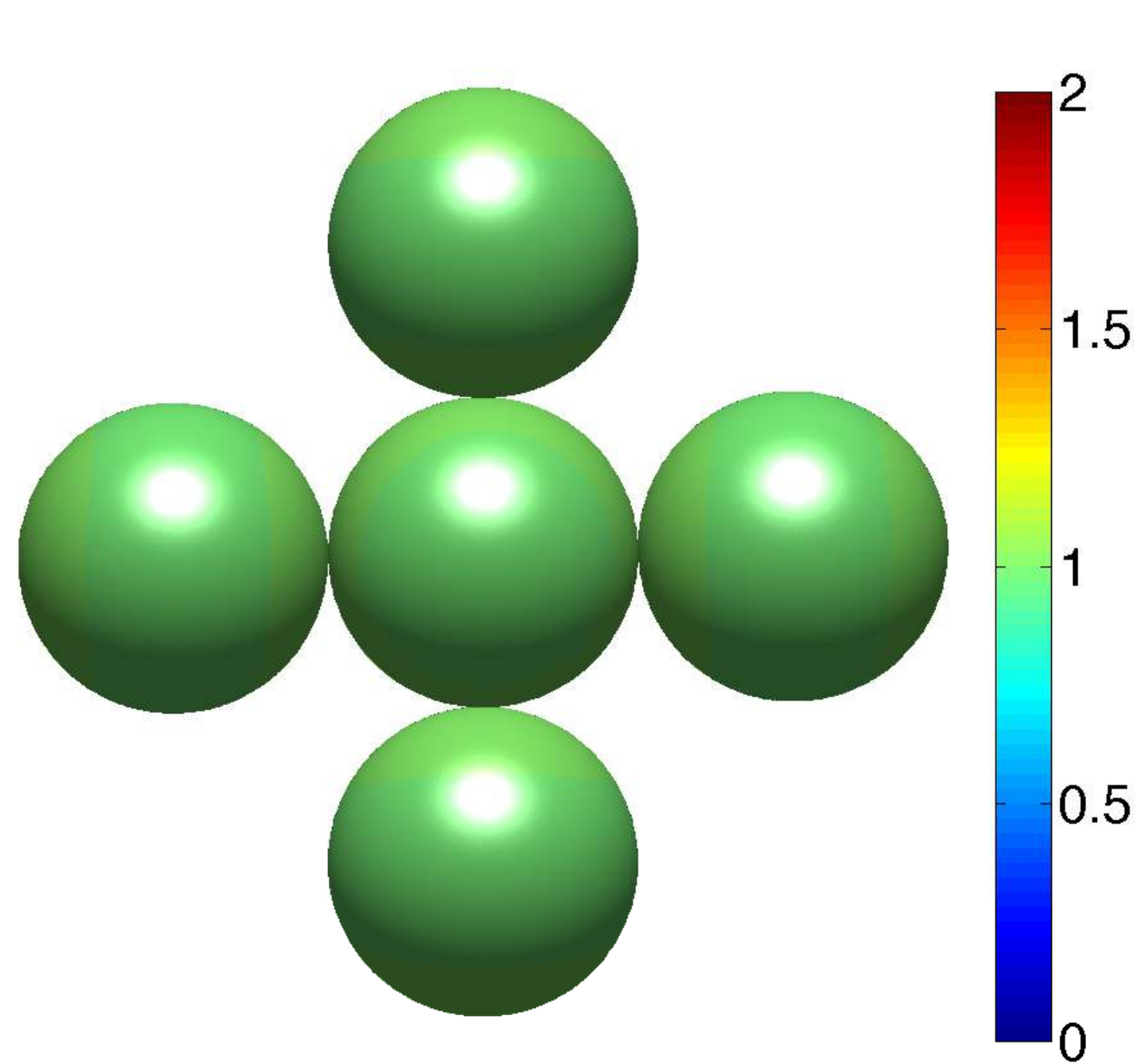}
\end{minipage}\hfill
\begin{minipage}{0.33\linewidth}
\includegraphics[width=1\linewidth]{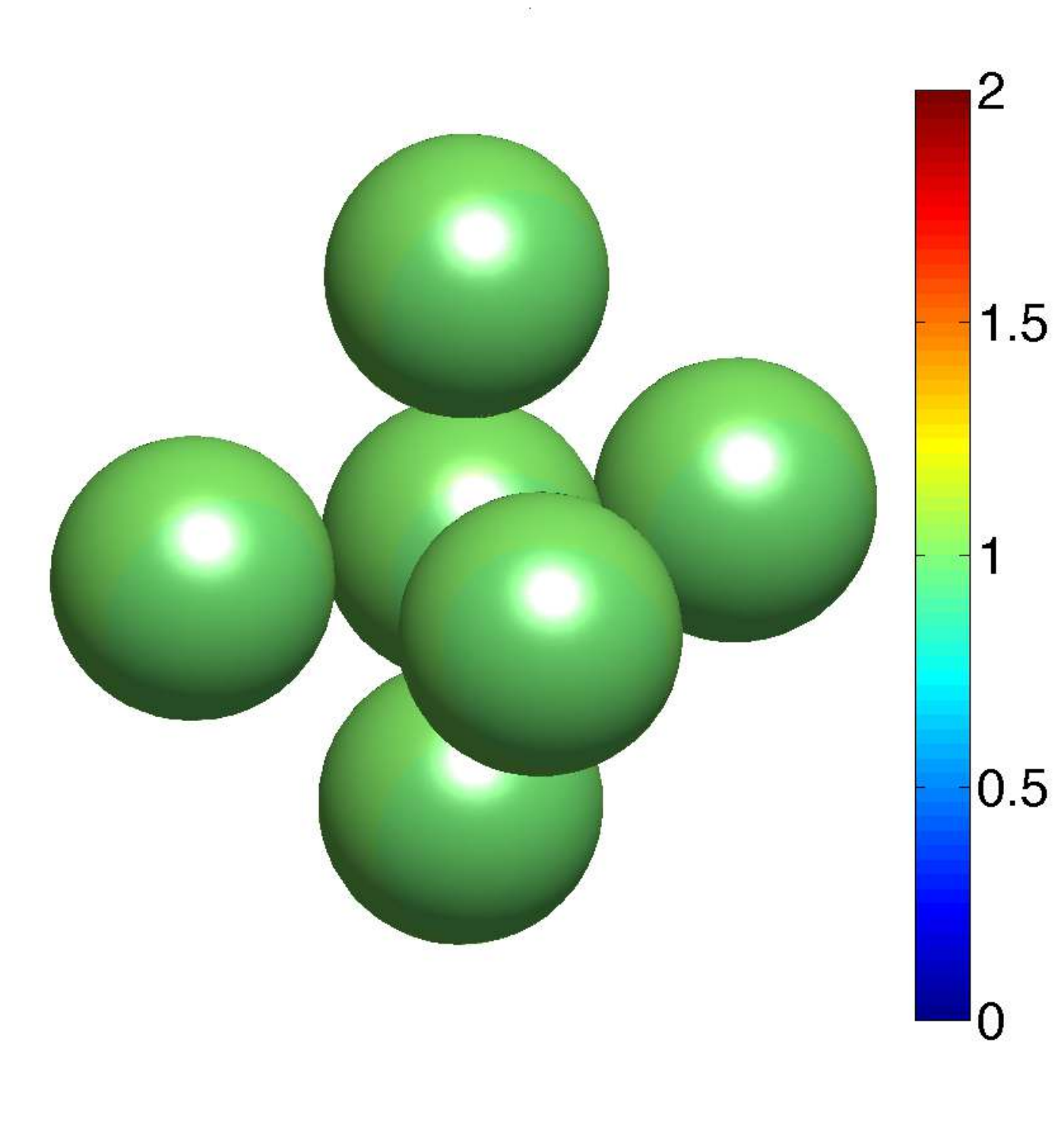}
\end{minipage}\hfill
\caption{The best conformal factors found for $\Lambda_k$, $k=1,\ldots 6$ on genus zero meshes representing $k$-kissing spheres. See \S\ref{sec:genus0}. }
\label{fig:KissingSpheres}
\end{center}
\end{figure}

\begin{figure}[t]
\begin{center}
\begin{minipage}{0.33\linewidth}
\centering $\Lambda_1 = 25.13$
\end{minipage}\hfill
\begin{minipage}{0.33\linewidth}
\centering $\Lambda_2 = 50.26$
\end{minipage}\hfill
\begin{minipage}{0.33\linewidth}
\centering $\Lambda_3 = 75.39$
\end{minipage}\hfill\\
\begin{minipage}{0.33\linewidth}
\includegraphics[width=.9\linewidth]{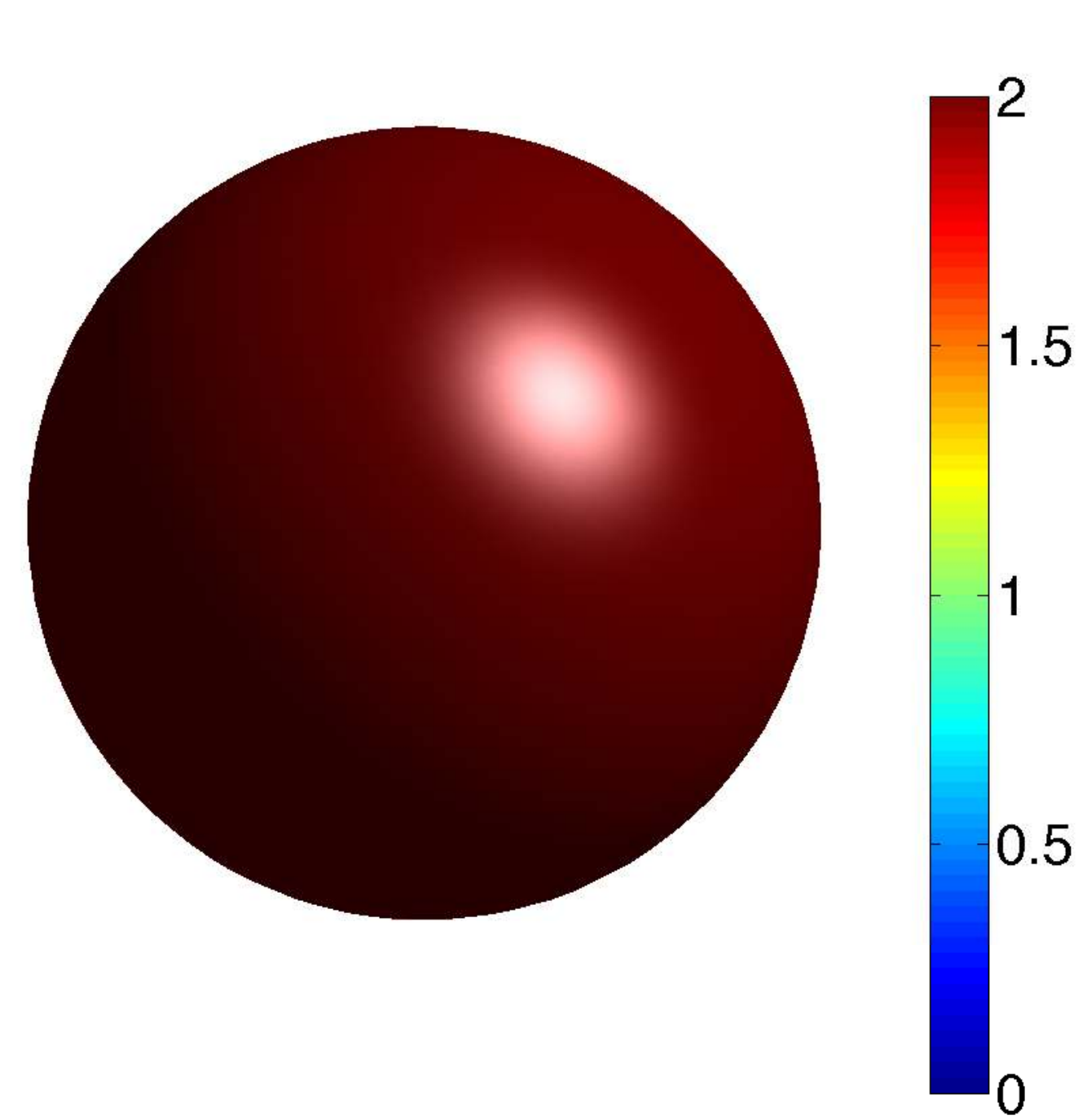}
\end{minipage}\hfill
\begin{minipage}{0.33\linewidth}
\includegraphics[width=1\linewidth]{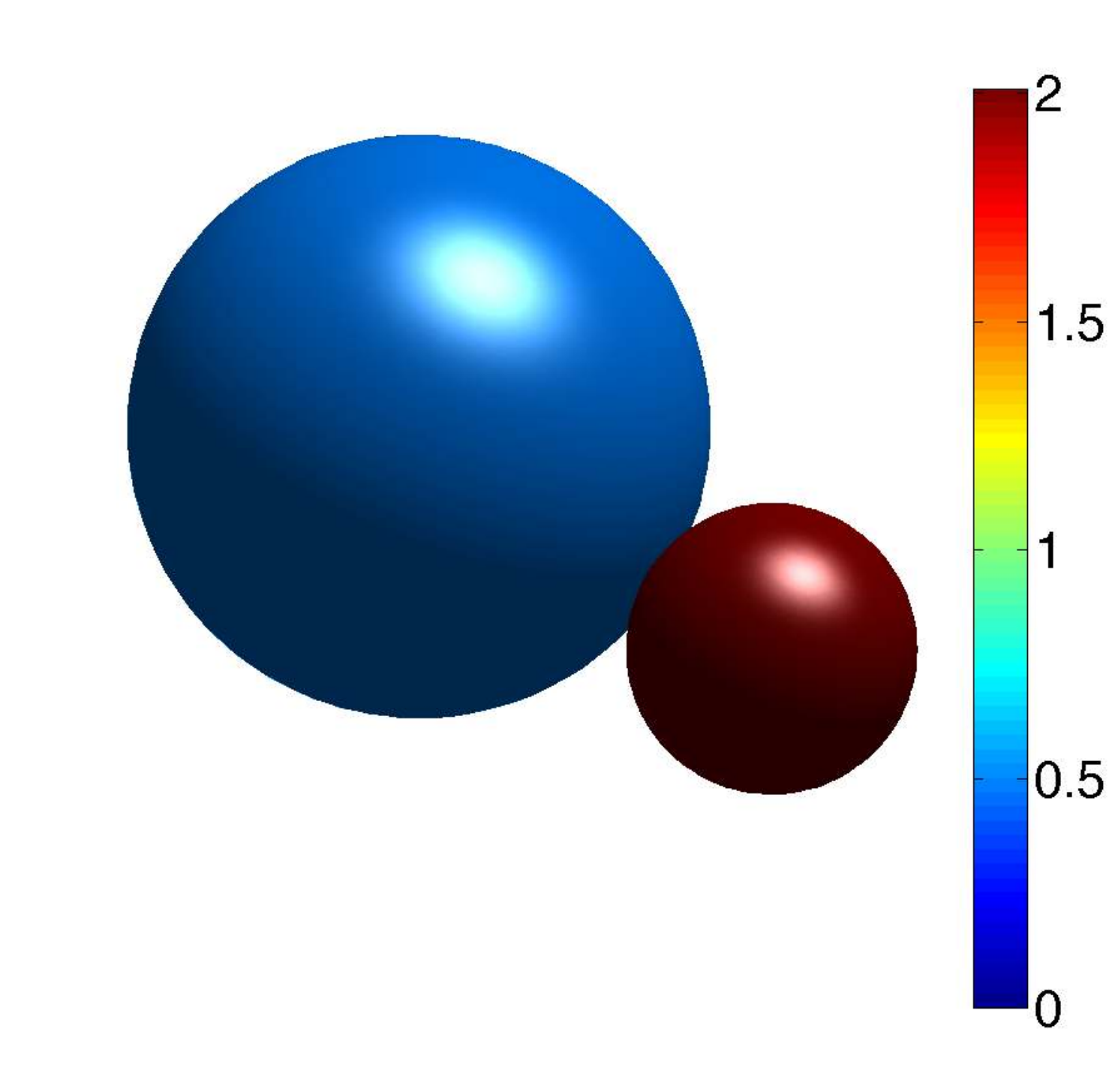}
\end{minipage}\hfill
\begin{minipage}{0.33\linewidth}
\includegraphics[width=1\linewidth]{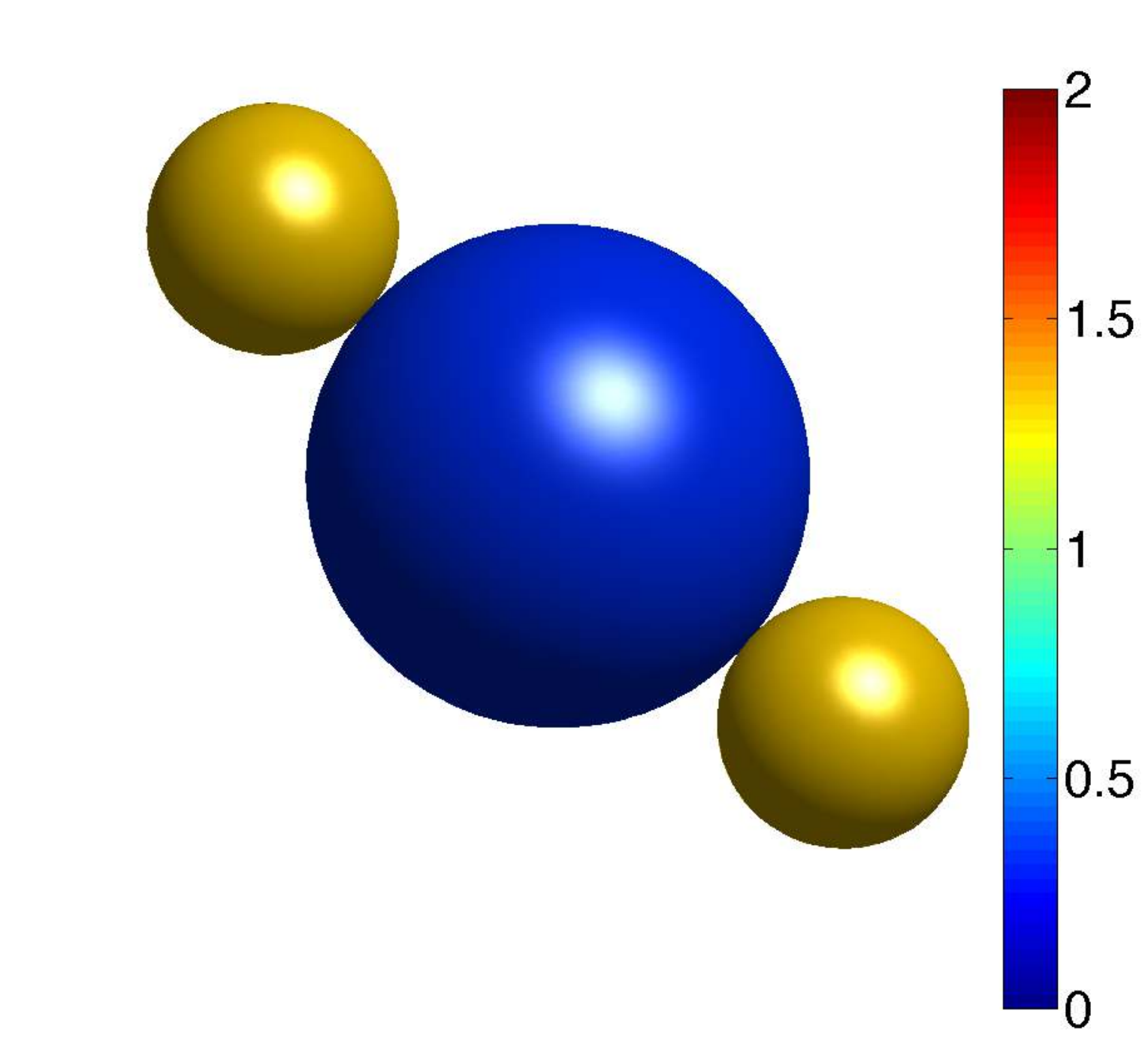}
\end{minipage}\hfill\\
\vspace{0.5cm}
\begin{minipage}{0.33\linewidth}
\centering $\Lambda_4 = 100.50$
\end{minipage}\hfill
\begin{minipage}{0.33\linewidth}
\centering $\Lambda_5 = 125.63$
\end{minipage}\hfill
\begin{minipage}{0.33\linewidth}
\centering $\Lambda_6 = 150.75$
\end{minipage}\hfill\\
\begin{minipage}{0.33\linewidth}
\includegraphics[width=1\linewidth]{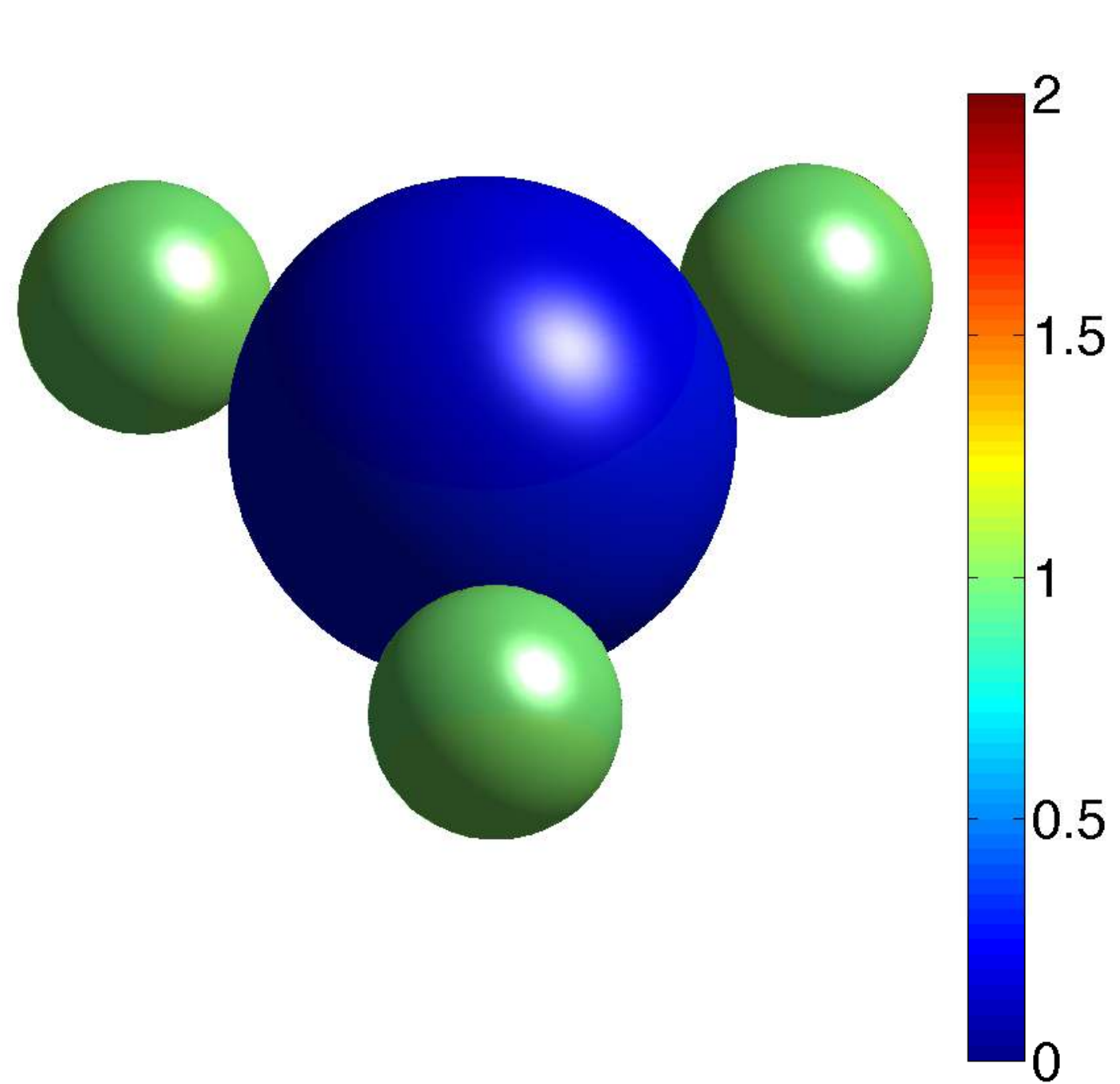}
\end{minipage}\hfill
\begin{minipage}{0.33\linewidth}
\includegraphics[width=1\linewidth]{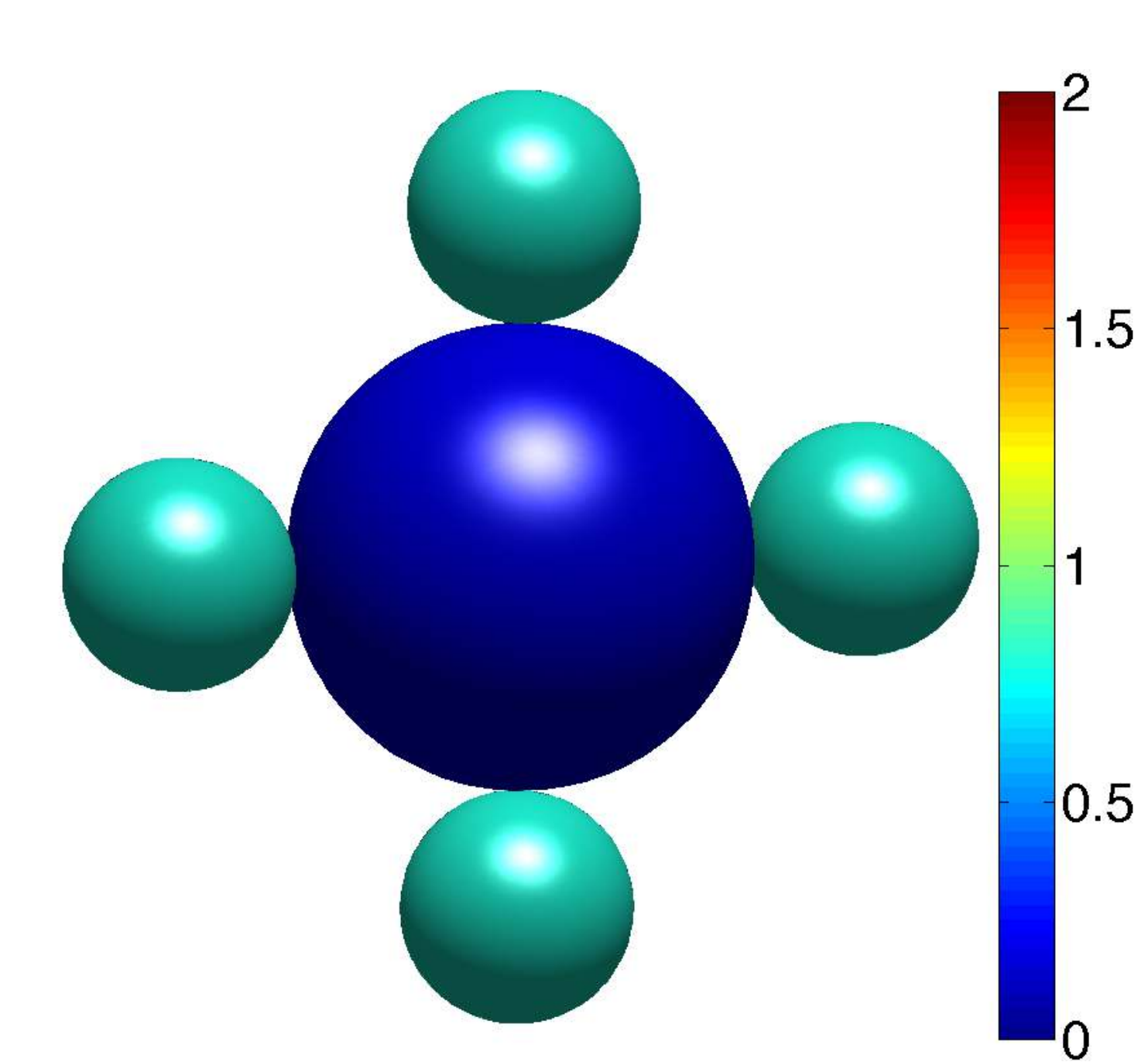}
\end{minipage}\hfill
\begin{minipage}{0.33\linewidth}
\includegraphics[width=1\linewidth]{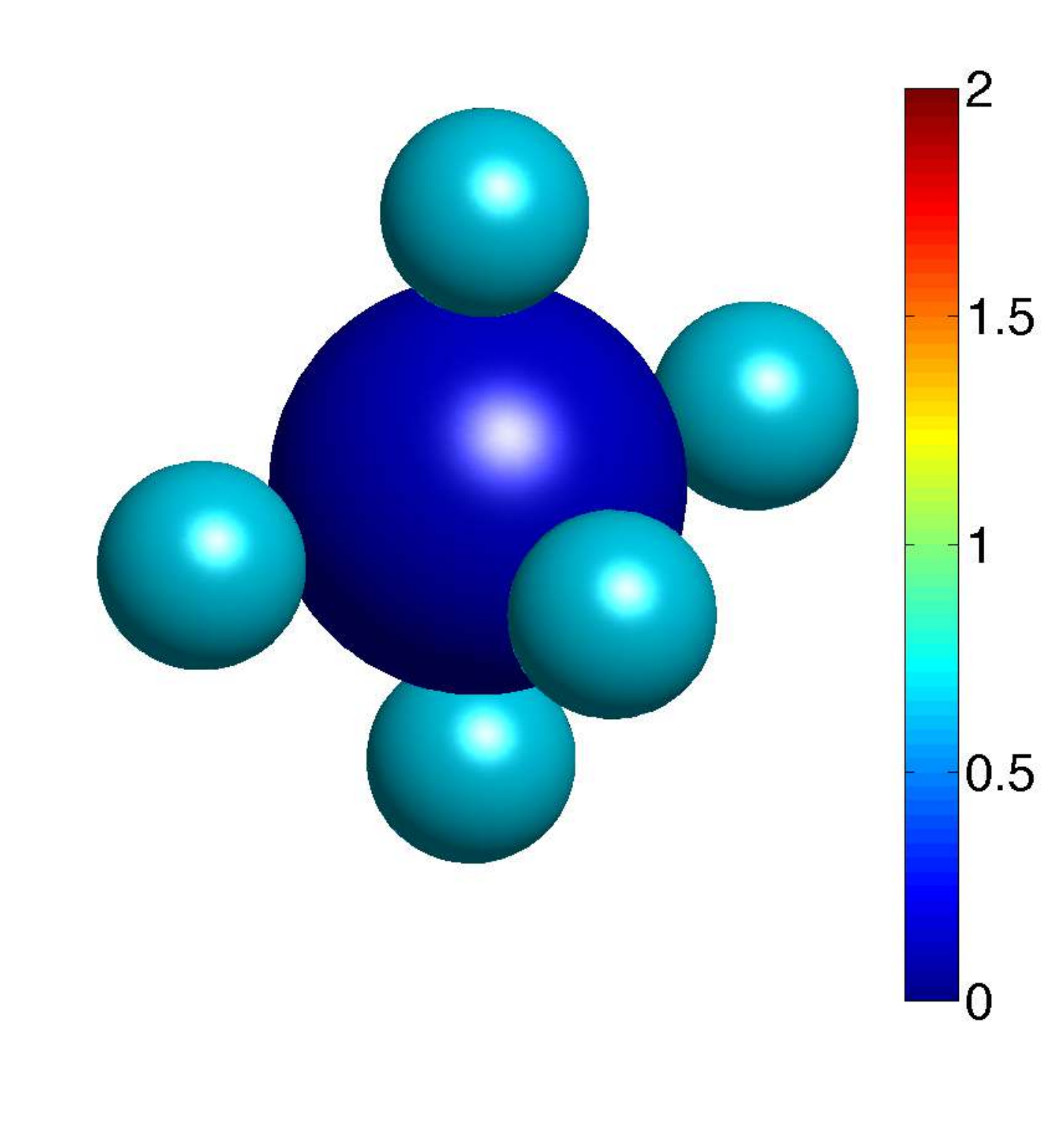}
\end{minipage}\hfill
\caption{The best conformal factors found for $\Lambda_k$, $k=1,\ldots 6$  on genus zero meshes representing a unit sphere kissing with $k-1$ spheres with radius $1/2$. See \S\ref{sec:genus0}. }
\label{fig:KissingSpheres2}
\end{center}
\end{figure}

\begin{figure}[t]
\begin{center}
\begin{minipage}{0.49\linewidth}
\includegraphics[width=.55\linewidth]{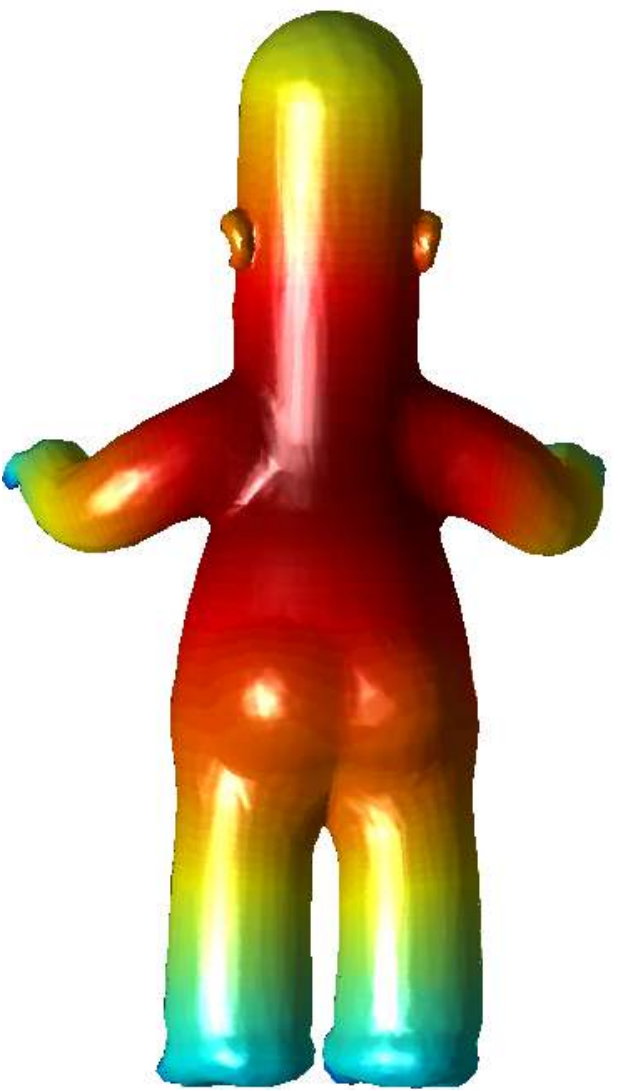}
\end{minipage}
\begin{minipage}{0.49\linewidth}
\includegraphics[width=.75\linewidth]{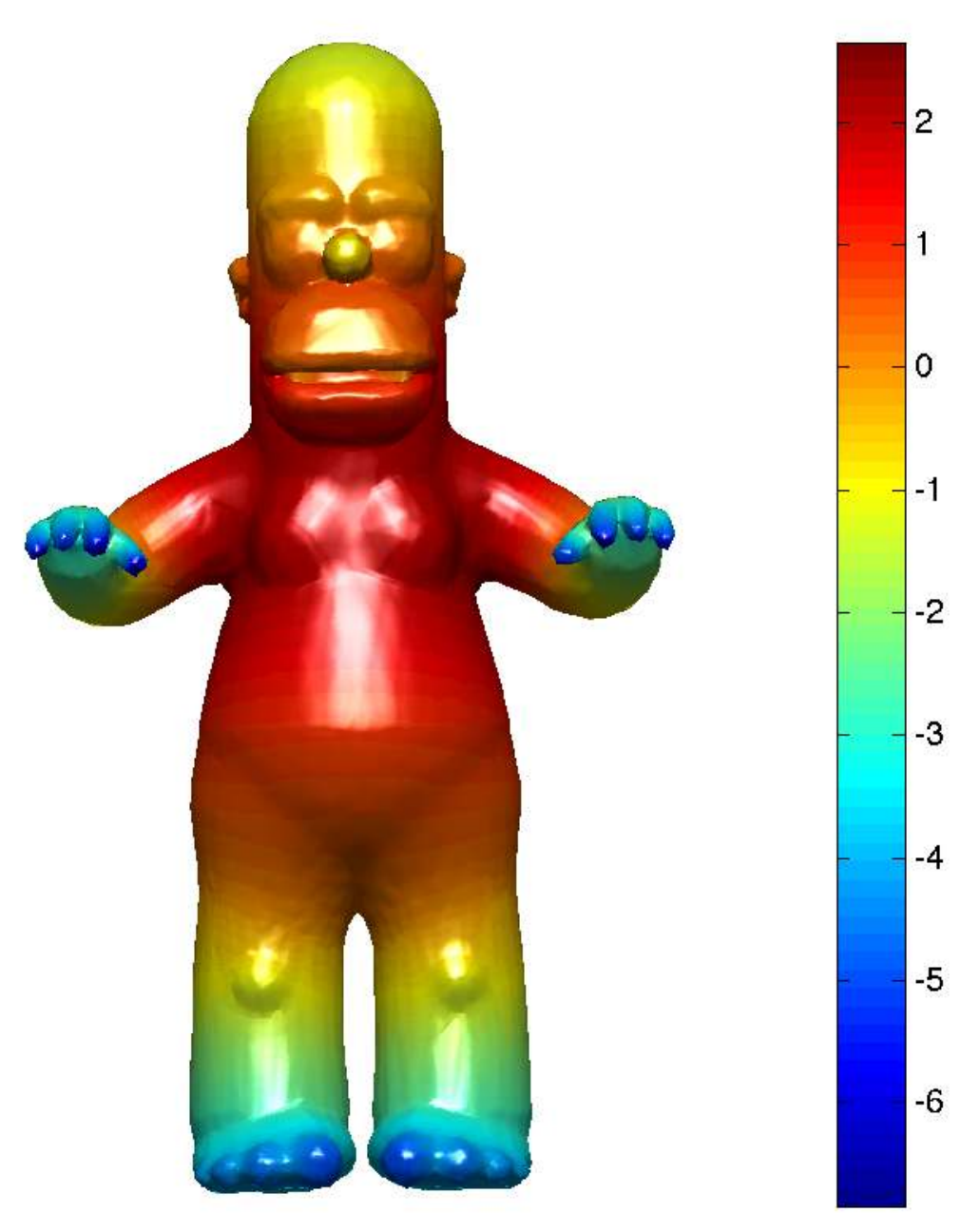}
\end{minipage}\hfill\\
\caption{A plot of the function $u = \log(\omega^\star)/2$, where $\omega^\star$ is the conformal factor corresponding to the first conformal eigenvalue $\Lambda^c_1$,  for a ``Homer Simpson'' mesh.   See \S\ref{sec:genus0}.}
\label{fig:homerGlobalParam}
\end{center}
\end{figure}

\medskip

As another computational experiment, we again consider the mesh of ``Homer Simpson'',  as discussed in \S\ref{sec:homer}. 
For this mesh, we  compute a conformal factor $\omega^\star$ which attains $\Lambda^c_1$ and plot 
the function $u = \log(\omega^\star)/2$ in Figure  \ref{fig:homerGlobalParam}. 
The first eight non-zero  eigenvalues computed for this conformal factor  are given by $2.01$, $2.01$, $2.01$, $5.93$, $6.03$, $6.04$, $6.12$ and $6.17$. 
 We see that the first three eigenvalues are close to the first three eigenvalues of the unit sphere ($\lambda = 2.00$). The 4th--8th eigenvalues are near to the 4th--8th eigenvalues of the unit sphere ($\lambda = 6.00$). This discrepancy in the higher eigenvalues may be explained by (i) we only approximately solve the optimization problem and (ii)  higher eigenvalues are more sensitive to perturbations in the conformal factor.

\subsection{The conformal spectrum of flat tori} \label{sec:FundConfEig}
In this section, we study the first conformal eigenvalue of the $(a,b)$-flat tori for various values of $(a,b)$.  For all computations, we use the spectral method described in  \S\ref{sec:compMeth}.  For a comparison, we first compute the  first non-trivial eigenvalue of the $(a,b)$-flat tori. 

\begin{figure}[t]
\begin{center}
\begin{minipage}{0.49\linewidth}
\includegraphics[width=1\linewidth]{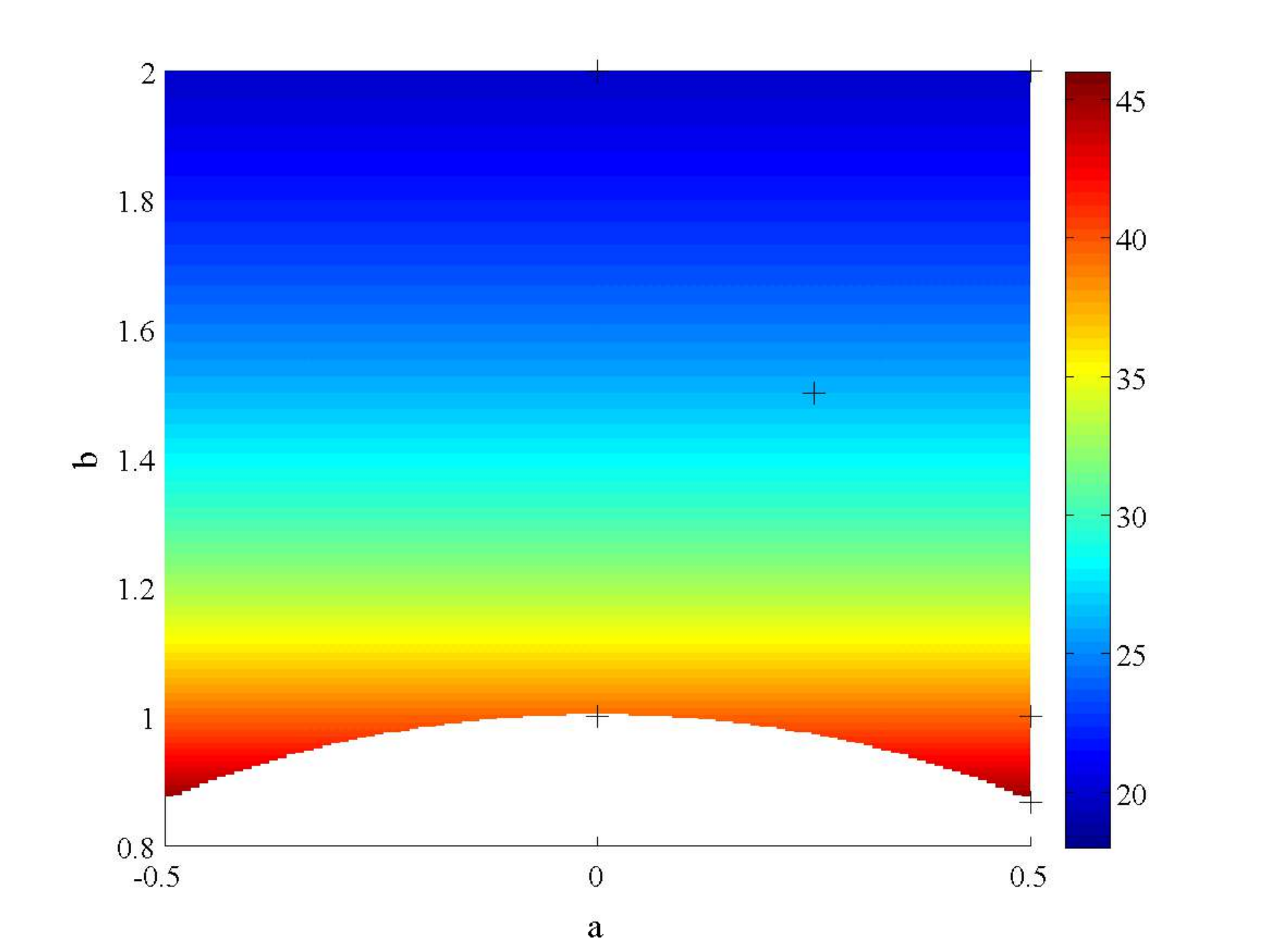}
\end{minipage}
\begin{minipage}{0.49\linewidth}
\ 
\includegraphics[width=1\linewidth]{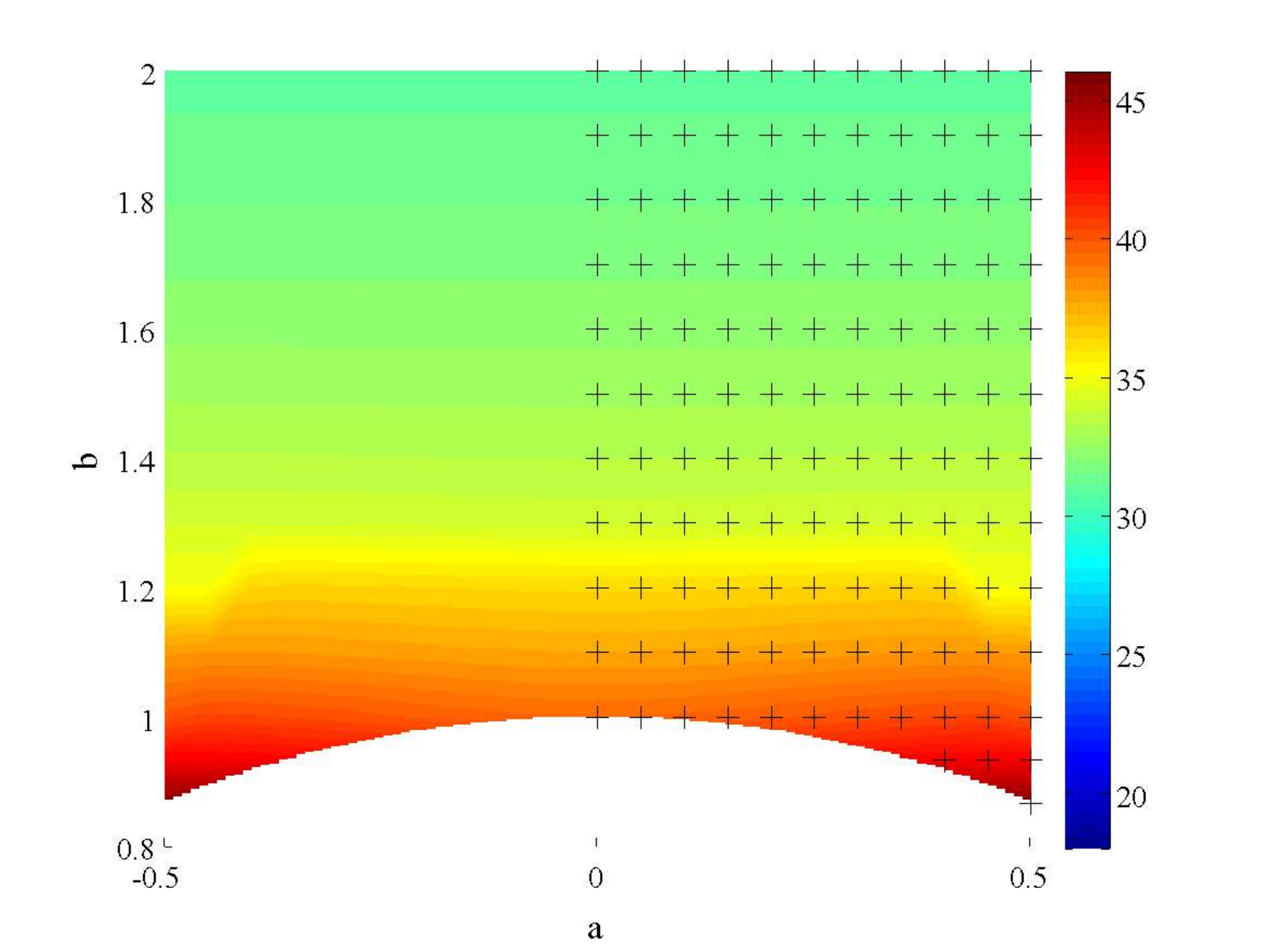}
\end{minipage}  \\

\begin{tabular}{c c| c c  }
& $(a,b)$ & $\Lambda_1(a,b)$ & $\Lambda^c_1 (a,b)$ \\
\hline
A& $(\frac{1}{2},\frac{\sqrt 3}{2})$ & 45.58 &45.58   \\
B& $(0,1)$ & 39.48 & 39.48  \\
C&$(\frac{1}{2},1)$ & 39.48 & 40.33  \\
D&$(\frac{1}{4},\frac{3}{2})$ & 26.32 & 33.45  \\
E&$(0,2)$ & 19.74 & 30.97  \\
F&$(\frac{1}{2},2)$ & 19.74 & 30.95  
\end{tabular} 
\caption{ {\bf (left)} The first eigenvalue  of $(a,b)$-flat tori, $\Lambda_1(a,b)$, for values of $(a,b) \in F$. Selected values of $(a,b)$, indicated by crosshairs, `$+$', are tabulated below for reference. 
{\bf (right)} The first conformal eigenvalue of $(a,b)$-flat tori, $\Lambda^c_1(a,b)$, for values of $(a,b) \in F$. Selected values are tabulated below for reference. An eigenvalue optimization problem was solved for values of $(a,b)$ indicated by crosshairs, `$+$'; the other values were obtained by interpolation. 
{\bf (bottom)} Tabulated values of $\Lambda_1(a,b)$ and $\Lambda^c_1(a,b)$ for selected values of $(a,b)$. The conformal factors attaining the given values of $\Lambda^c_1(a,b)$ plotted in Figure \ref{fig:flatToriFundConfFact}. See \S\ref{sec:FundConfEig}.}
\label{fig:flatToriFundEig}
\end{center}
\end{figure}

\begin{figure}[h!]
\begin{center}
\includegraphics[width=.47\linewidth]{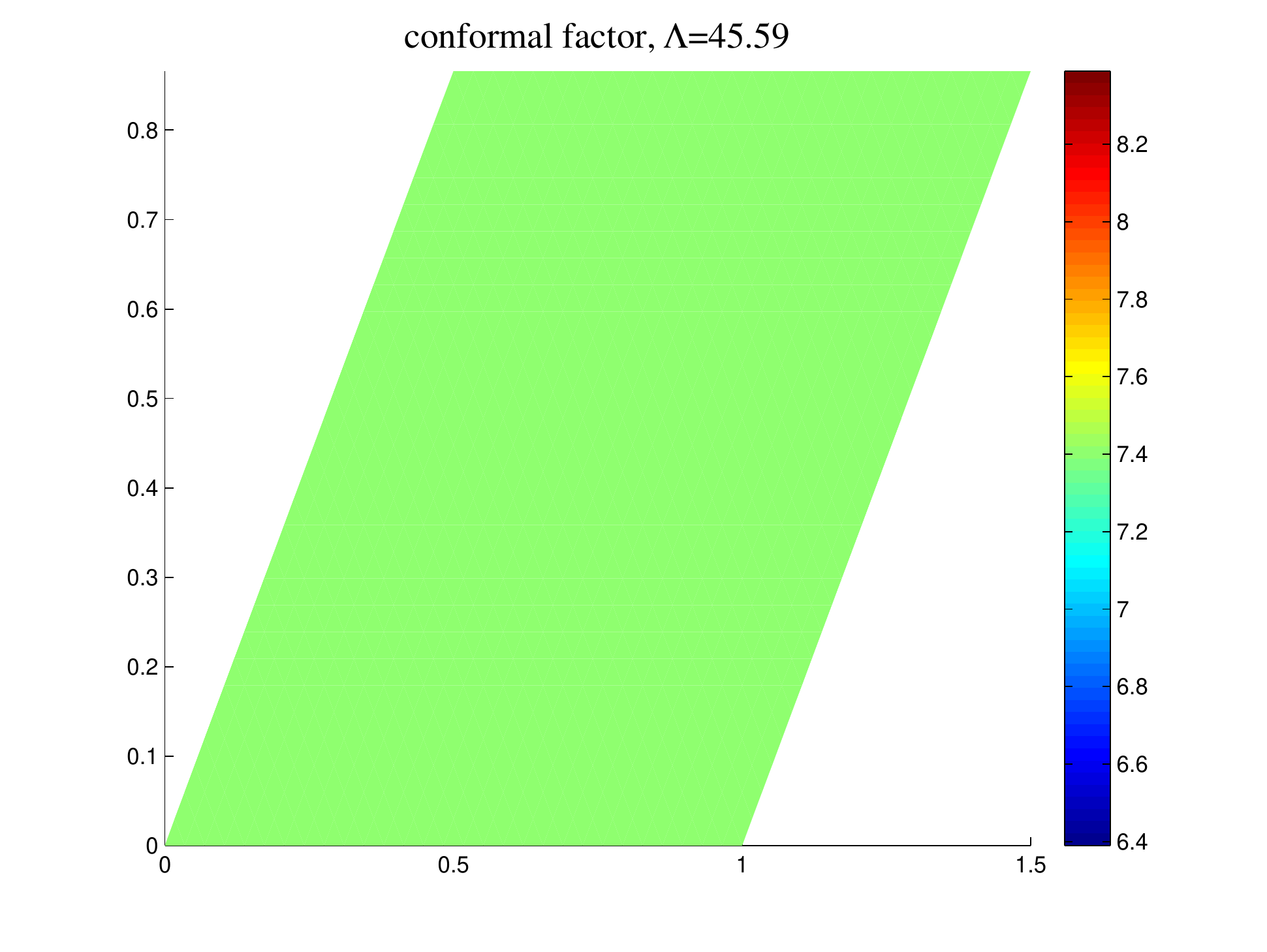}
\includegraphics[width=.47\linewidth]{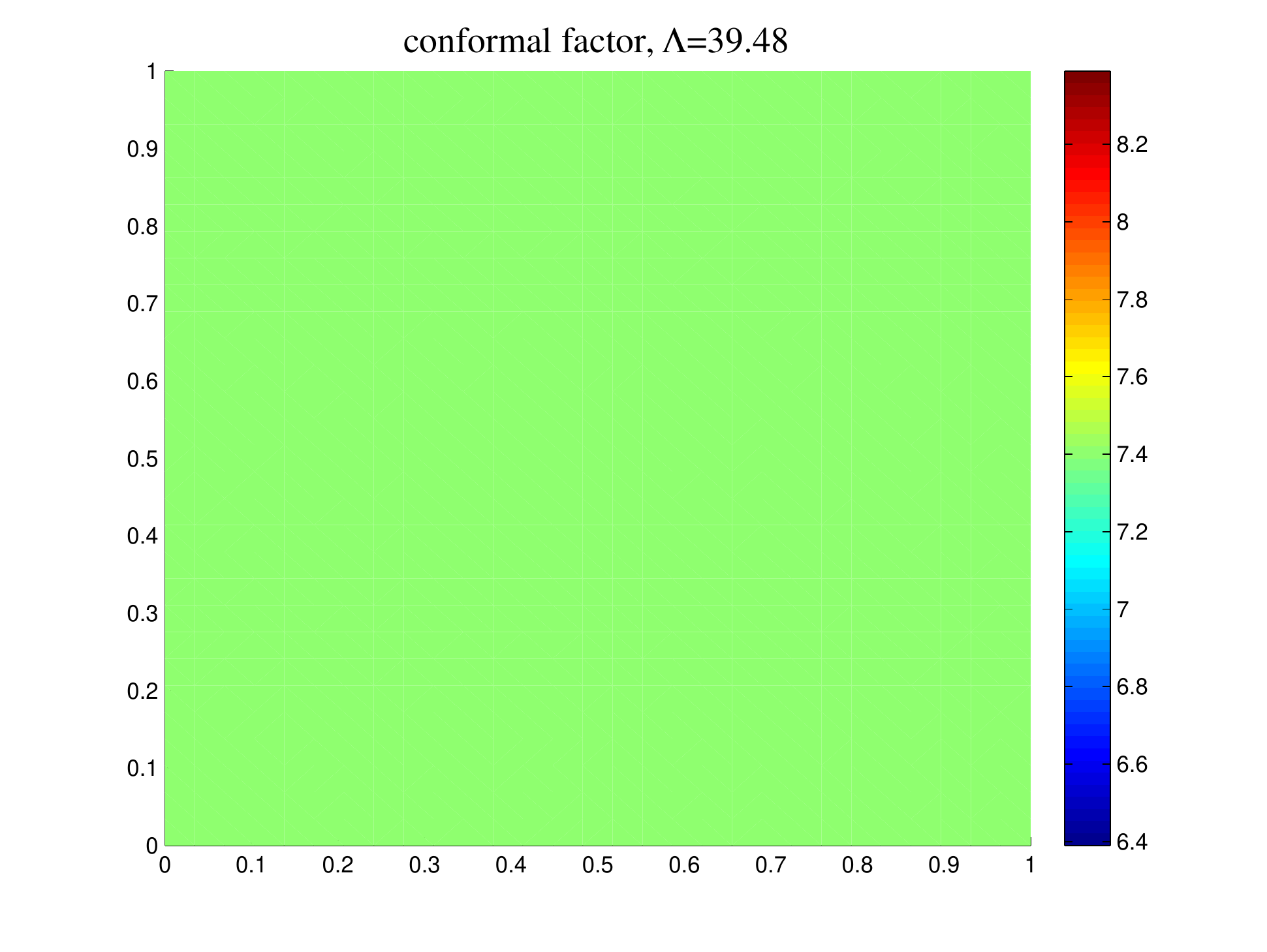}\\
\vspace{-.5cm}
 (A) \qquad \qquad \qquad \qquad \qquad \qquad \qquad \qquad \qquad  \qquad \qquad (B)\\
\includegraphics[width=.47\linewidth]{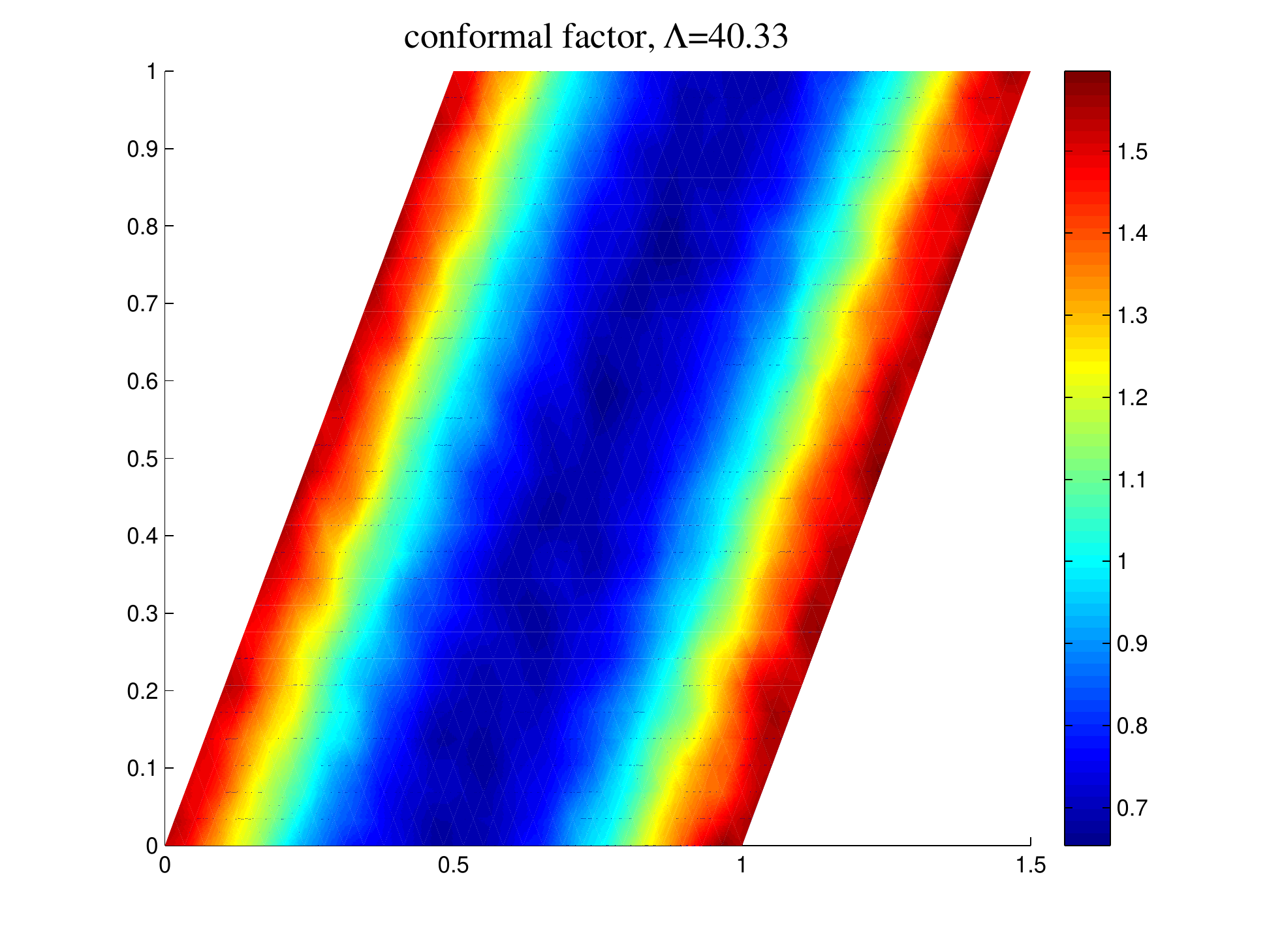}
\includegraphics[width=.47\linewidth]{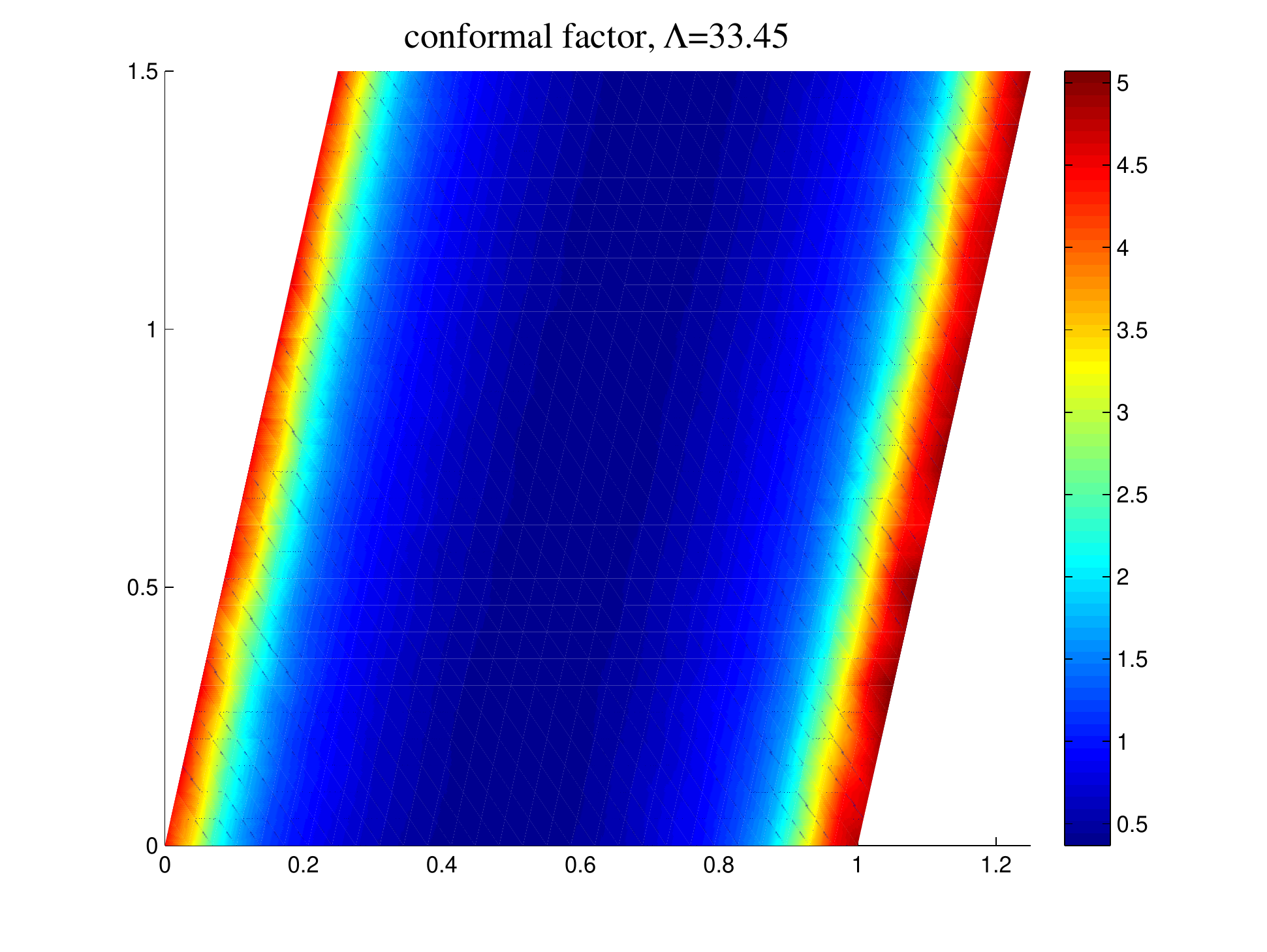}\\
\vspace{-.5cm}
 (C) \qquad \qquad \qquad \qquad \qquad \qquad \qquad \qquad \qquad  \qquad \qquad (D)\\
\includegraphics[width=.47\linewidth]{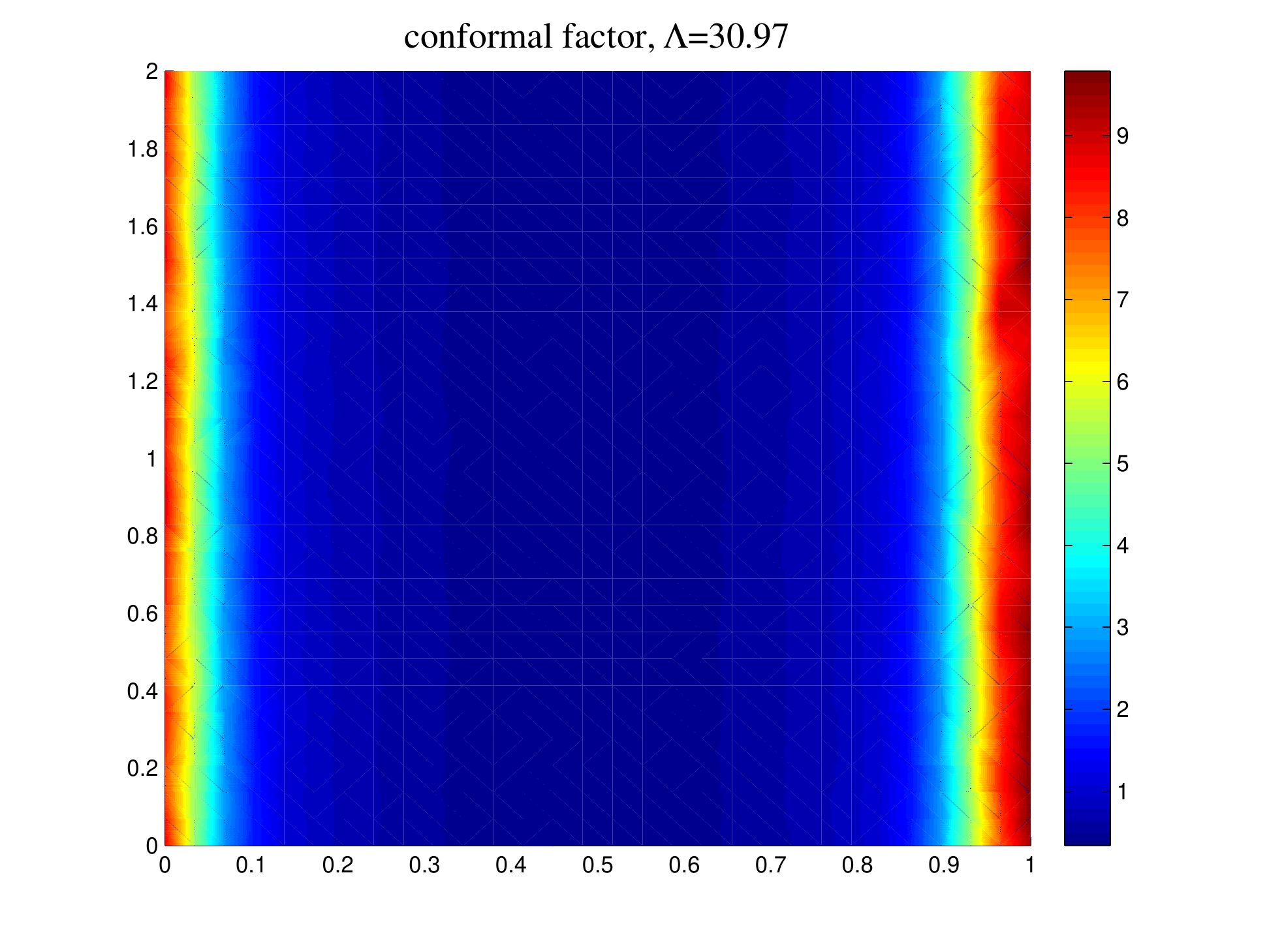}
\includegraphics[width=.47\linewidth]{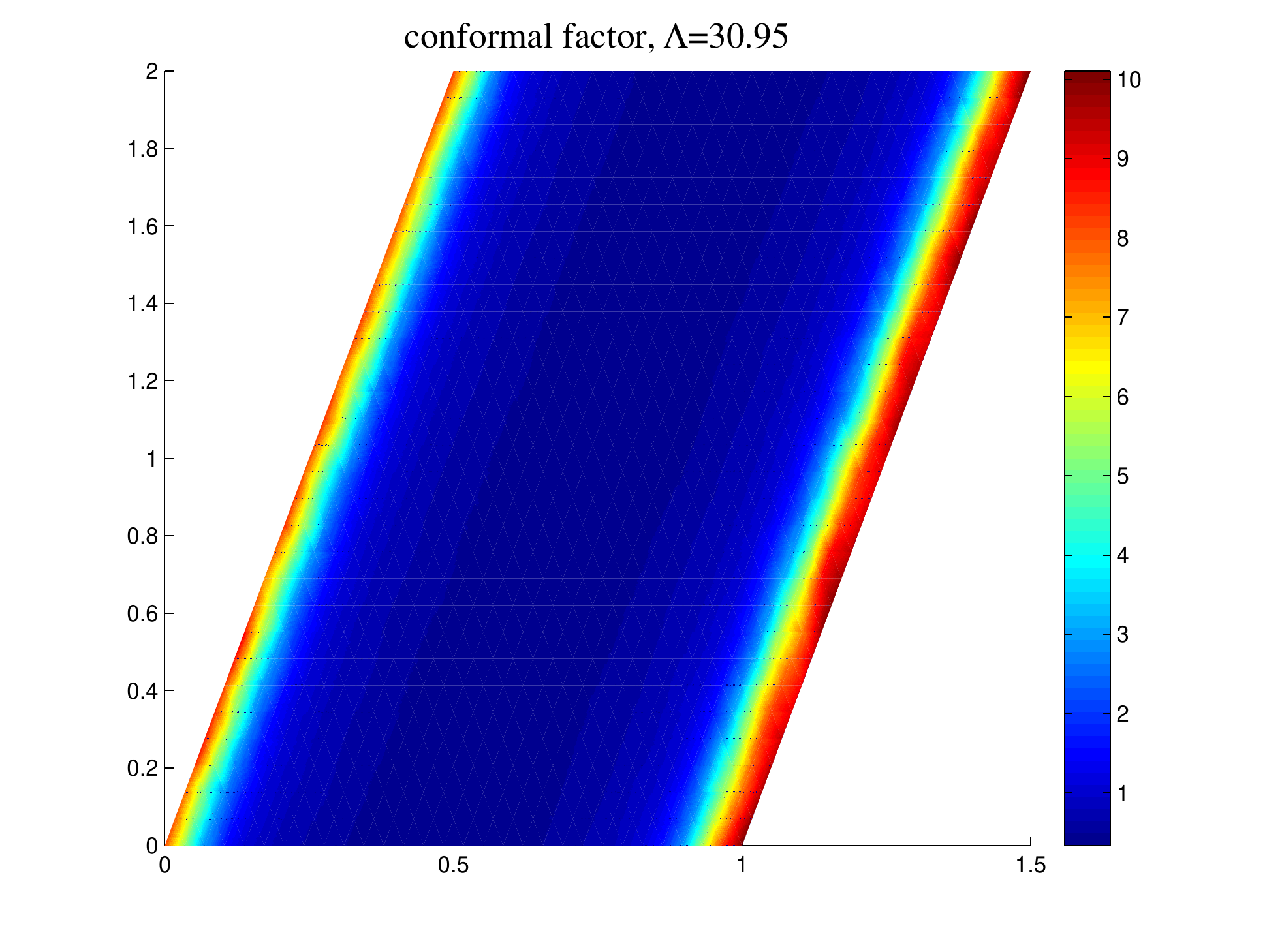}\\
\vspace{-.5cm}
 (E) \qquad \qquad \qquad \qquad \qquad \qquad \qquad \qquad \qquad (F)
 \vspace{.5cm}
\end{center}
\label{fig:flatToriFundConfFact}
\caption{A plot of the function $u = \log(\omega^\star)/2$, where $\omega^\star$ is the conformal factor attaining $\Lambda^c_1 (a,b)$ for the values of $(a,b)$ in the Table in Figure \ref{fig:flatToriFundEig}. See \S\ref{sec:FundConfEig}.}
\end{figure}

From \eqref{eq:EigsFlatTori}, it is not difficult to show that the first non-trivial eigenvalue of the $(a,b)$-flat torus is $\lambda_1(a,b) = \frac{4 \pi^2}{b^2}$. Thus, the volume normalized eigenvalue is given by $\Lambda_1(a,b) = \frac{4 \pi^2}{b}$. Note that $\Lambda_1(a,b)$ is monotone decreasing in $b$ and does not depend on $a$.  When $b=\frac{\sqrt{3}}{2}$, we recover the optimal value $\Lambda_1^\star = \frac{8 \pi^2}{\sqrt{3}}$, as discussed in \S\ref{sec:FlatTori}. A plot of $\Lambda_1(a,b)$ for $(a,b)\in F$ is given in Figure \ref{fig:flatToriFundEig}(left).  Note that this is the same as the top left panel of Figure \ref{fig:eig_FlatTori}, except the range of values of $b$ is smaller. Values of $\Lambda_1(a,b)$ for a small selection of parameters $(a,b)$ are also tabulated Figure \ref{fig:flatToriFundEig}. The parameters $(a,b)$ chosen are indicated by crosshairs, `$+$', in  Figure \ref{fig:flatToriFundEig}(left).

We abbreviate the first conformal eigenvalue of the flat torus, $\Lambda^c_1(T_{a,b},[g_0])$, by $\Lambda^c_1(a,b)$. 
We recall from \eqref{eq:SpectralGap} that $\Lambda^c_1(a,b) > 8\pi \approx 25.13$.  Clearly we have that $\Lambda^c_1(a,b) \leq \frac{8 \pi^2}{\sqrt 3} \approx 45.58$ with equality only at $(a,b) = (\frac{1}{2}, \frac{\sqrt 3}{2})$.  
\begin{prpstn} \label{prop:lam1cdecreasing}
For fixed $a$,  $\Lambda^c_1(a,b)$ is a non-increasing  function in $b$. 
\end{prpstn}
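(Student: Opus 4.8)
The plan is to reduce the statement to a comparison between two eigenvalue optimisation problems on one and the same torus, using the conformal invariance of the Dirichlet energy in dimension two together with the affine diffeomorphism between $T_{a,b}$ and $T_{a,b'}$. Since $\int_M|\nabla_g\psi|^2\,d\mu_g$ depends only on the conformal class when $\dim M=2$, the density argument behind Proposition~\ref{prop:Existence} lets one write
$$
\Lambda^c_1(a,b)=\sup_{\omega>0}\ \Big(\int\omega\Big)\ \inf_{\substack{\psi:\ \int\psi\,\omega=0\\ \psi\not\equiv 0}}\ \frac{E^{(b)}(\psi)}{\int\psi^2\,\omega},
\qquad
E^{(b)}(\psi):=b\int\psi_x^2+\frac1b\int(a\psi_x-\psi_y)^2 ,
$$
where $E^{(b)}$ is a fixed multiple of the Dirichlet energy of the flat $(a,b)$-torus written on the unit square with periodic boundary conditions (see \eqref{eq:Delab}), all integrals are over the unit square, and $\omega$ ranges over the appropriate closure of the admissible conformal factors. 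All of the $b$-dependence is now carried by the stiffness form $E^{(b)}$.

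Next, fix $b'>b$ and set $t:=b/b'\in(0,1)$. Let $\Psi\colon T_{a,b'}\to T_{a,b}$ be the vertical contraction $(x,y)\mapsto(x,ty)$ in the coordinates identifying each torus with $\mathbb R^2$ modulo its lattice. Then $\Psi$ is a diffeomorphism with $\Psi^*g_0^{(b)}=dx^2+t^2dy^2=:g_t$, so $\Psi^*$ carries $[g_0^{(b)}]$ bijectively onto the conformal class of $g_t$ on $T_{a,b'}$; since $\lambda_k$ and $\vol$ are isometry invariants, the optimisation \eqref{eq:opt:Linfty} is diffeomorphism invariant, whence
$$
\Lambda^c_1(a,b)=\sup_{\omega>0}\Lambda_1\big(T_{a,b'},\omega g_t\big),
\qquad
\Lambda^c_1(a,b')=\sup_{\omega>0}\Lambda_1\big(T_{a,b'},\omega g_0^{(b')}\big).
$$
Writing out volumes and the conformally invariant Dirichlet energies on a fundamental domain of $T_{a,b'}$, the claim $\Lambda^c_1(a,b')\le\Lambda^c_1(a,b)$ reduces to
$$
\sup_{\omega>0}\Big(\int\omega\Big)\inf_\psi\frac{\int\psi_x^2+\int\psi_y^2}{\int\psi^2\omega}
\ \le\
\sup_{\omega>0}\Big(\int\omega\Big)\inf_\psi\frac{t\int\psi_x^2+t^{-1}\int\psi_y^2}{\int\psi^2\omega},
$$
i.e.\ that replacing the round Dirichlet form by the anisotropic one $t\int\psi_x^2+t^{-1}\int\psi_y^2$ with $t<1$ cannot lower the optimal value.

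The comparison in the last display is the crux. The anisotropic form dominates the round one exactly on functions concentrated in the $y$-direction (where $t\int\psi_x^2\le\int\psi_y^2$) and is strictly smaller on $x$-dominated functions, so a bare transplantation of a near-maximiser $(\omega^\star,\psi^\star)$ of $\Lambda^c_1(a,b')$ could be defeated by an $x$-dominated competitor that drags the infimum below $\Lambda^c_1(a,b')$. The plan is therefore to transplant $\omega^\star$ after a mild modification — adding a small constant background, or equivalently a thin collar of constant conformal factor around a meridian — tuned so that every $x$-dominated trial function has Rayleigh quotient at least $\Lambda^c_1(a,b')-\varepsilon$, while $\psi^\star$, which for a near-extremal torus metric is $y$-concentrated (the extremal configuration degenerating toward a round sphere bubbling off along a meridian, cf.\ \cite{Nadirashvili1996,Kokarev2011,hersch1970}), still realises the infimum up to $\varepsilon$; continuity of $\Lambda_1$ in $\omega$ then gives $\Lambda^c_1(a,b)\ge\Lambda^c_1(a,b')-\varepsilon$ for every $\varepsilon>0$. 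I expect the quantitative step — forcing the minimiser of the modified anisotropic problem to be $y$-concentrated, which is precisely where structural information about near-extremal metrics on tori must be used — to be the main difficulty.
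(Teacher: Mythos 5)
Your reduction to the anisotropic comparison is fine as far as it goes (the vertical contraction does carry $[g_0^{(b)}]$ to the class of $dx^2+t^2dy^2$ on $T_{a,b'}$, and in two dimensions the Dirichlet energies transform as you say), but at that point nothing has been proved: the displayed inequality between the round form $\int\psi_x^2+\int\psi_y^2$ and the anisotropic form $t\int\psi_x^2+t^{-1}\int\psi_y^2$ \emph{is} the proposition, and you leave it open. The repair you sketch — transplanting $\omega^\star$ after adding a collar or constant background and then arguing that every $x$-dominated competitor is expensive because near-extremal first eigenfunctions are $y$-concentrated — is not carried out, and as stated it cannot work uniformly in $(a,b)$: monotonicity is claimed for every $b$, including increments $b'=b+\epsilon$ at classes such as $(0,1)$ or $(\frac12,\frac{\sqrt3}{2})$ where the conformal maximizer is the flat metric and its first eigenfunctions are global Fourier modes with no concentration whatsoever, so the structural input you want to invoke (bubbling along a meridian) is simply absent there. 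No quantitative statement controls the infimum over $x$-dominated trial functions after your modification, so this is a plan with the crucial step missing, not a proof.

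The paper's proof never meets this difficulty because it uses a different normalization. It writes $\lambda_1(a,b,\omega)$ as the minimum of $4\pi^2\int_{[0,2\pi]^2}\frac{1}{b^2}(a\psi_x-\psi_y)^2+\psi_x^2\,dx\,dy$ subject to $\int\psi\,\omega=0$, $\int\psi^2\omega=1$, holding the \emph{weight} $\omega$ — not the area element — fixed as $b$ varies; the integrand is then pointwise non-increasing in $b$, so $\lambda_1(a,\cdot,\omega)$ is non-increasing for every fixed $\omega$ (use the $b_1$-eigenfunction as a trial function at $b_2>b_1$), and inserting the maximizing $\omega_2$ for $b_2$ gives $\Lambda^c_1(a,b_1)\ge\lambda_1(a,b_1,\omega_2)\ge\lambda_1(a,b_2,\omega_2)=\Lambda^c_1(a,b_2)$ in three lines. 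Your bookkeeping, which fixes the area density and hence produces the non-monotone form $b\int\psi_x^2+\frac1b\int(a\psi_x-\psi_y)^2$, is exactly what creates the indefinite comparison you then cannot resolve. Your underlying concern — that holding data fixed while changing $b$ also changes the volume normalization, since $\mathrm{vol}=\frac{b}{4\pi^2}\int\omega$ when $\omega$ is held fixed — is a legitimate observation about which the paper's short chain is silent, but your proposal does not convert that observation into an argument; the paper's normalization reduces the entire statement to a pointwise inequality between quadratic forms, which is what makes its proof essentially one line plus the transplantation of $\omega_2$.
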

\begin{proof}
The Rayleigh quotient for the first nonzero eigenvalue can be written
$$
\lambda_1(a,b,\omega) = \min_{\substack{\int \psi \omega = 0 \\ \int  \psi^2 \omega = 1}}  \ \ 4 \pi^2 \int_{[0,2\pi]^2} \ \frac{1}{b^2} (a \psi_x - \psi_y)^2 +  \psi_x^2 \ dx dy. 
$$
Let $a$ and $\omega$ fixed and let $b_2 \geq b_1$. Let $\psi$ be an eigenfunction corresponding to $\lambda_1(a,b_1,\omega)$ (which could have multiplicity greater than one). Then we have that
\begin{align*}
\lambda_1(a,b_2,\omega) &\leq 4 \pi^2 \int_{[0,2\pi]^2} \ \frac{1}{b_2^2} (a \psi_x - \psi_y)^2 +  \psi_x^2 \ dx dy \\
& \leq 4 \pi^2 \int_{[0,2\pi]^2} \ \frac{1}{b_1^2} (a \psi_x - \psi_y)^2 +  \psi_x^2 \ dx dy \\
& = \lambda_1(a,b_1,\omega)
\end{align*}
We conclude that for $a$ and $\omega$ fixed, $\lambda_1(a,b,\omega)$ is a non-increasing function in $b$. 

Fix $a$. Take $b_2 > b_1$ and let $\omega_2$ be a conformal factor attaining  $\Lambda^c_1(a,b_2)$. Then,
\begin{align*}
\Lambda^c_1(a,b_1) 
& \geq \lambda_1 (a,b_1, \omega_2) \qquad \text{by  optimality} \\
& \geq \lambda_1(a, b_2, \omega_2) \qquad  \text{by the monotonicity of $ \lambda_1(a,b,\omega_2)$ in $b$}\\
&= \Lambda^c_1(a,b_2).
\end{align*}
\end{proof}

In Figure \ref{fig:flatToriFundEig}(right), we plot  values of $\Lambda^c_1(a,b) $ for $(a,b) \in F$, computed on a $40\times40$ mesh. An eigenvalue optimization problem was solved for the values of $(a,b)$ indicated by crosshairs, `$+$'; the other values were obtained by interpolation. Values of $\Lambda^c_1(a,b)$ for a small selection of parameters $(a,b)$ are also tabulated.  
We observe that for fixed $a$, the value of $\Lambda^c_1(a,b)$ is non-increasing in $b$, as proved in Proposition \ref{prop:lam1cdecreasing}. We also observe that $\Lambda^c_1(a,b)$ varies smoothly with $(a,b)$. 
In Figure  \ref{fig:flatToriFundConfFact}, the optimal conformal factors are plotted on the $(a,b)$-tori for these values of $(a,b)$. The flat metric attains the maximal value obtained for the square torus, $(a,b) = (0,1)$, and equilateral torus, $(a,b) = (1/2, \sqrt 3 / 2)$. As $b$ increases and the torus becomes long and thin, the best conformal factors found have structure which have higher density  along a thin strip.  We observe that the optimal conformal factor continuously deforms as the parameters $(a,b)$ change. This is in contrast with other eigenvalue optimization problems where the optimizing structure can be discontinuous with changing objective function parameters  \cite{OK12,OK12b}.

\subsection{The topological spectrum of genus one Riemannian surfaces } \label{sec:TopSpecOne}
In this section, we approximate  $\Lambda^t_k(1)$  using the computational methods described in \S\ref{sec:compMeth}. We proceed with several numerical studies. First we use a spectral method to identify approximate maximizers  by varying $(a,b,\omega)$ on a flat torus. By examining the structure of the minimizers, we  recognize that the minimizer is obtained by a configuration consisting of the union of an equilateral flat torus and $k-1$ identical round spheres. We then use a finite element method on a mesh given by this configuration to  provide further evidence  that this is the optimal configuration. 

In this first numerical study, the Laplace-Beltrami eigenvalues  of a fixed surface satisfying \eqref{eq:specMethEigEq} are computed using a spectral method on a $60\times 60$ mesh. As discussed in \S\ref{sec:moduliSp}, the moduli space for $\gamma=1$,  as shown by the shaded area in Figure  \ref{fig:coordDefs}(right), parameterizes the conformal classes of metrics $[g_0]$. Thus, any genus $\gamma=1$ surface can be described by a triple $(a,b,\omega)$ where $(a,b)\in F$ as in \eqref{eq:Mad} and $\omega$ a smooth positive function. The optimization problem is solved using a quasi-Newton  method, where the gradient of the eigenvalues with respect to the triple $(a,b,\omega)$  is computed via Proposition  \ref{prop:FTderivs}. 

\begin{figure}[t]
\begin{center}
\includegraphics[width=5.3cm]{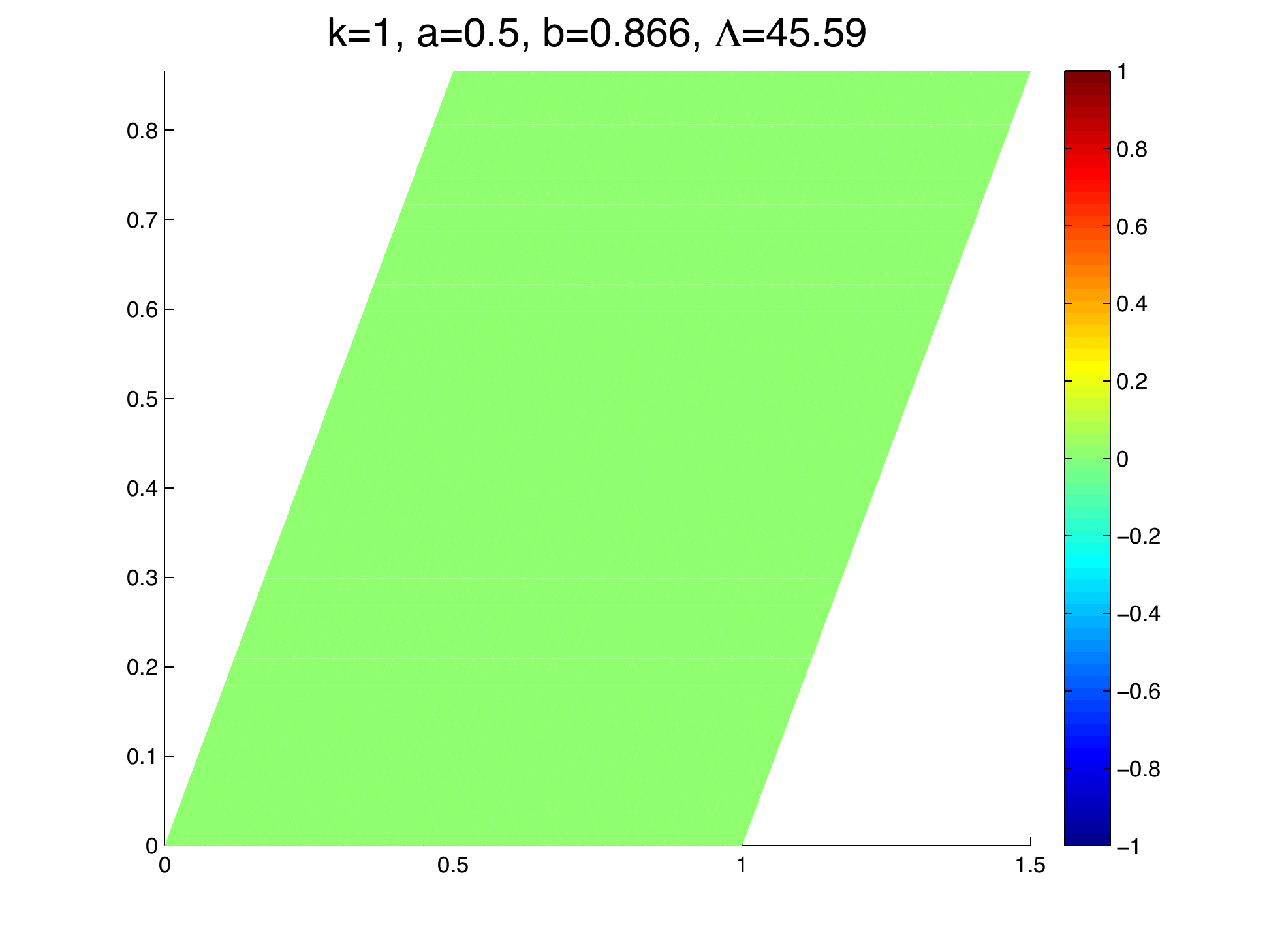} 
\includegraphics[width=5.3cm]{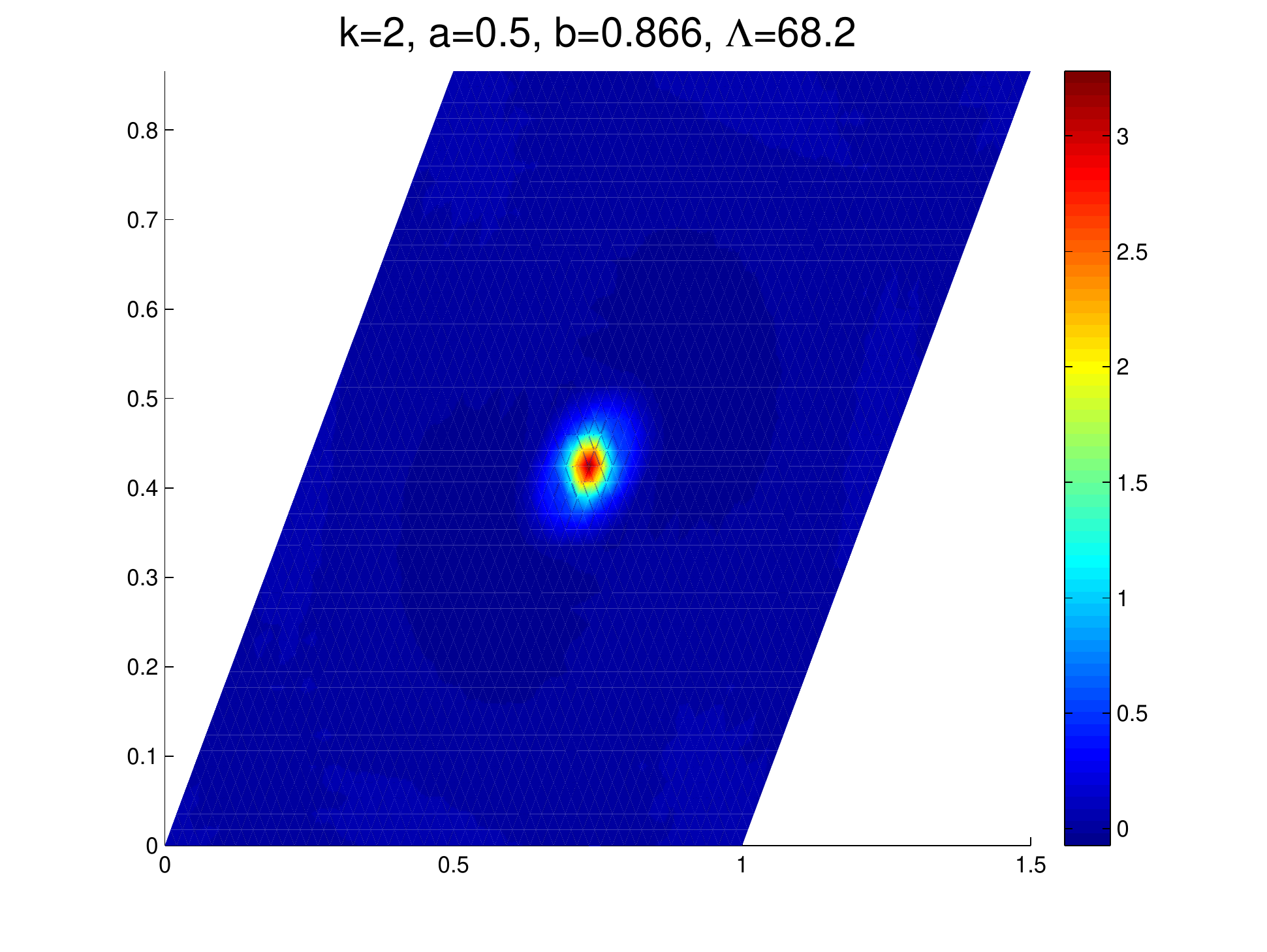} 
\includegraphics[width=5.3cm]{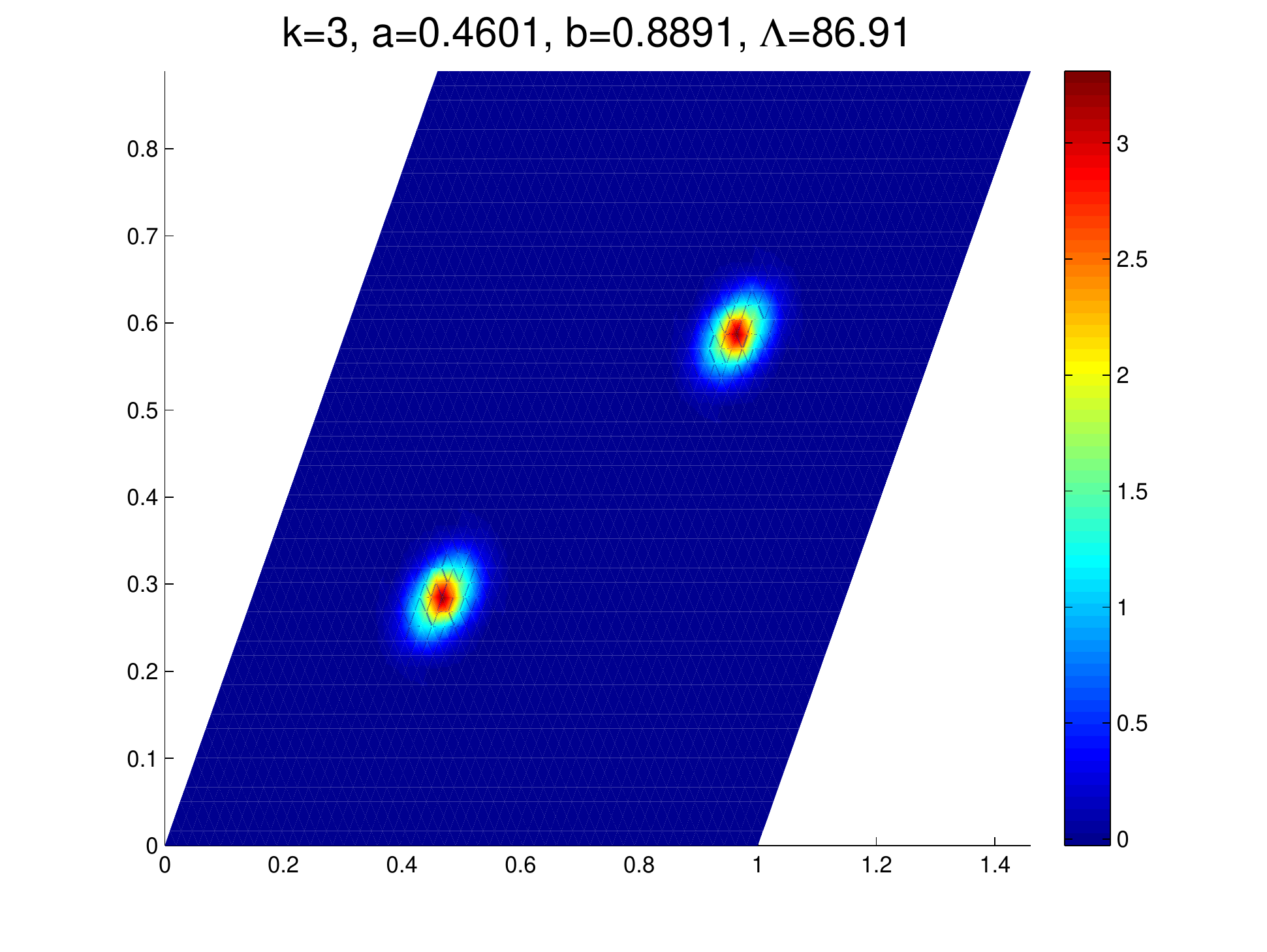} 
\caption{Maximal triples $(a,b,\omega)$ obtained for $k=1$ (left), $k=2$ (center), and $k=3$ (right). The color represents the conformal factor, $\omega$.  See \S\ref{sec:TopSpecOne}. }
\label{fig:optTori}
\end{center}
\end{figure}

Using this computational method, the best triples  $(a,b,\omega)$  found for $k=1,2,$ and $3$ are presented in Figure  \ref{fig:optTori}. 
To obtain these triples, we chose many different initializations. The initial conditions used for Figure  \ref{tab:ConfofmalSpectrum} were the sum of localized Gaussians located at distributed points on the torus.  To further illustrate our computational method, we consider a randomly initialized conformal factor. In Figure  \ref{fig:sphereSequence} (right), we plot for $k=2$, the 0th, 5th, 24th, and 30th  iterates of the conformal factor on the $(a,b)$-torus.

The  computational results in Figure  \ref{fig:optTori}(left) support Nadirashvili's result  that 
$\Lambda^t_1(1) = \frac{8\pi^2}{\sqrt{3}} \approx 45.58$
is attained only by the flat metric on the equilateral flat torus, $(a,b) = \left(\frac{1}{2},\frac{\sqrt{3}}{2}\right)$  \cite{Nadirashvili1996}. 
For $k=2$, the optimal conformal factor found is mostly flat with one localized maximum. The value obtained ($\Lambda_2 = 68.2$) is very close to the value found for the disconnected union of an equilateral flat torus and a sphere of appropriate volumes, 
$\Lambda_2 = \Lambda^t_1(1) + \Lambda^t_1(0) \approx 70.72$ (see \S\ref{sec:disconnectedUnion}). 
For $k=3$, the optimal conformal factor found is mostly flat with two localized maximum. The value obtained ($\Lambda_2 = 86.91$) is not as close to 95.85, the value for the disconnected union of an equilateral flat torus and two spheres. 
For larger values of $k$, we observe that optimal metrics are mostly flat, but have $k-1$ localized regions with large value.  However, as for the genus $\gamma=0$ case described in \S\ref{sec:genus0}, the computational problem becomes increasingly difficult with larger values of $k$ because the localized regions are increasingly small. It is thus very difficult to realize metrics which correspond to this configuration using this method.

\begin{figure}[t]
\begin{center}
\includegraphics[width=8cm]{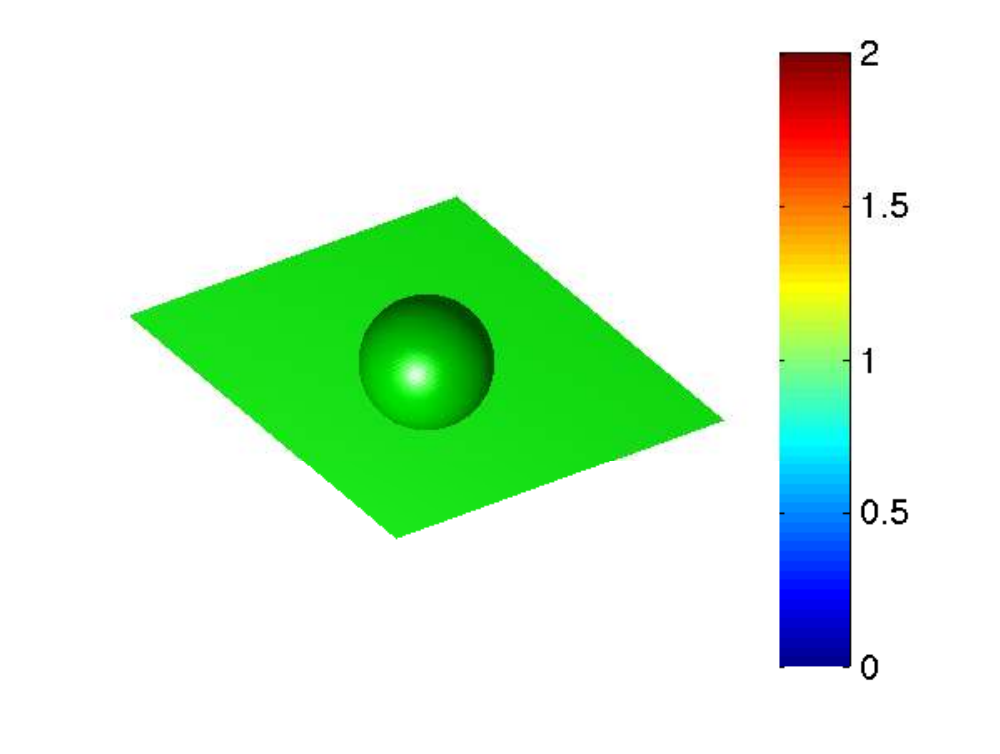} 
\includegraphics[width=8cm]{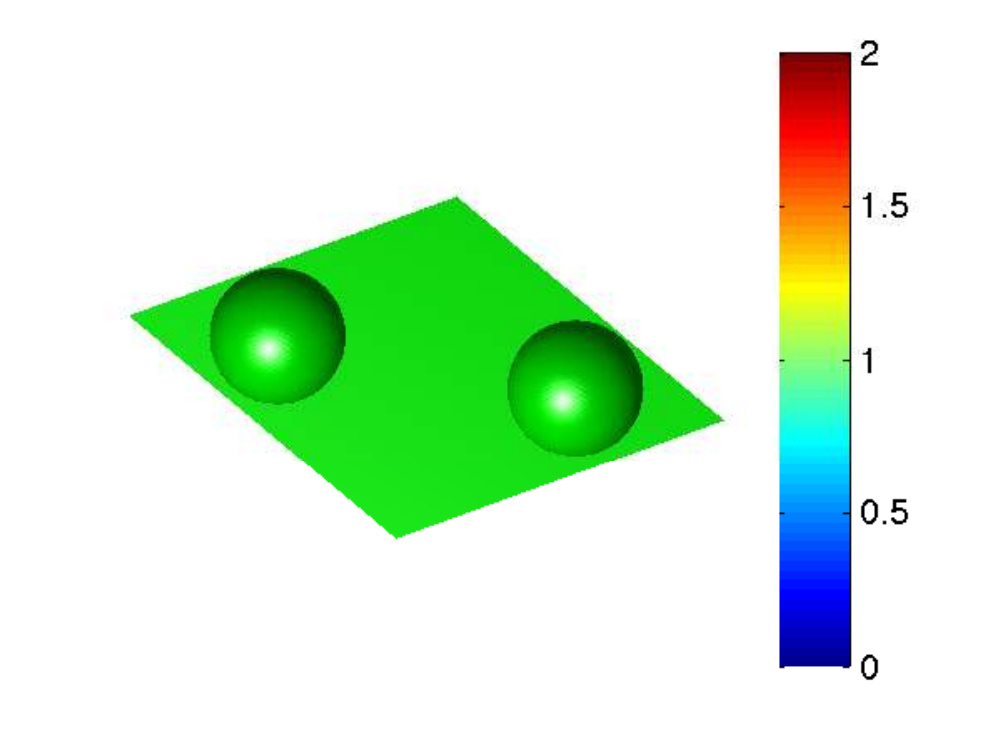} 
\includegraphics[width=8cm]{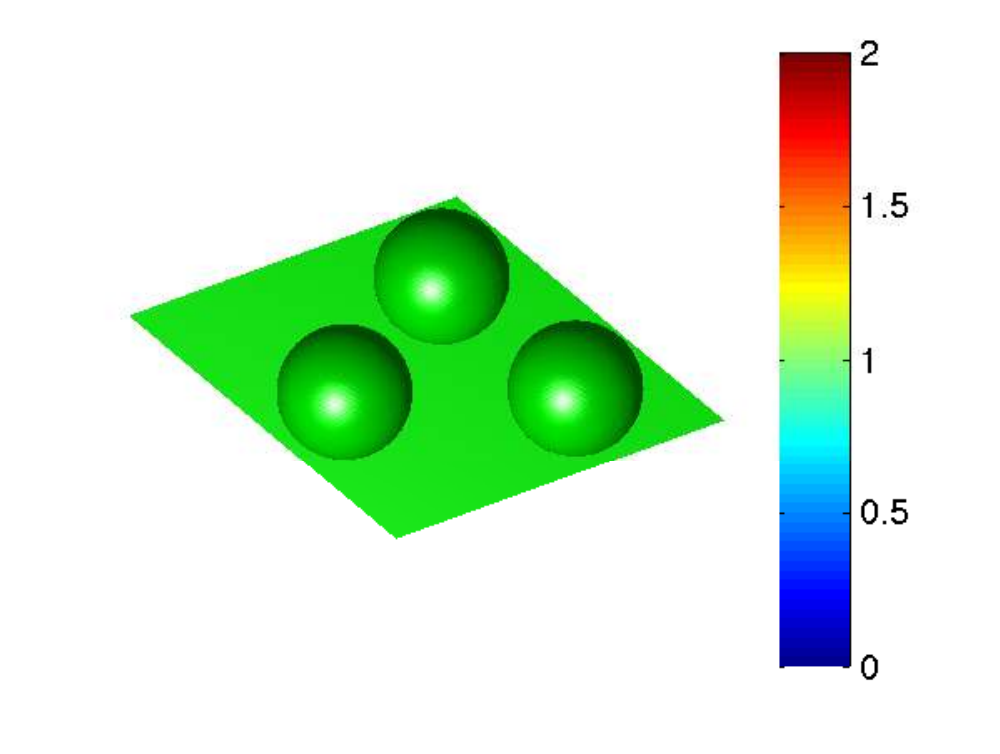} 
\includegraphics[width=8cm]{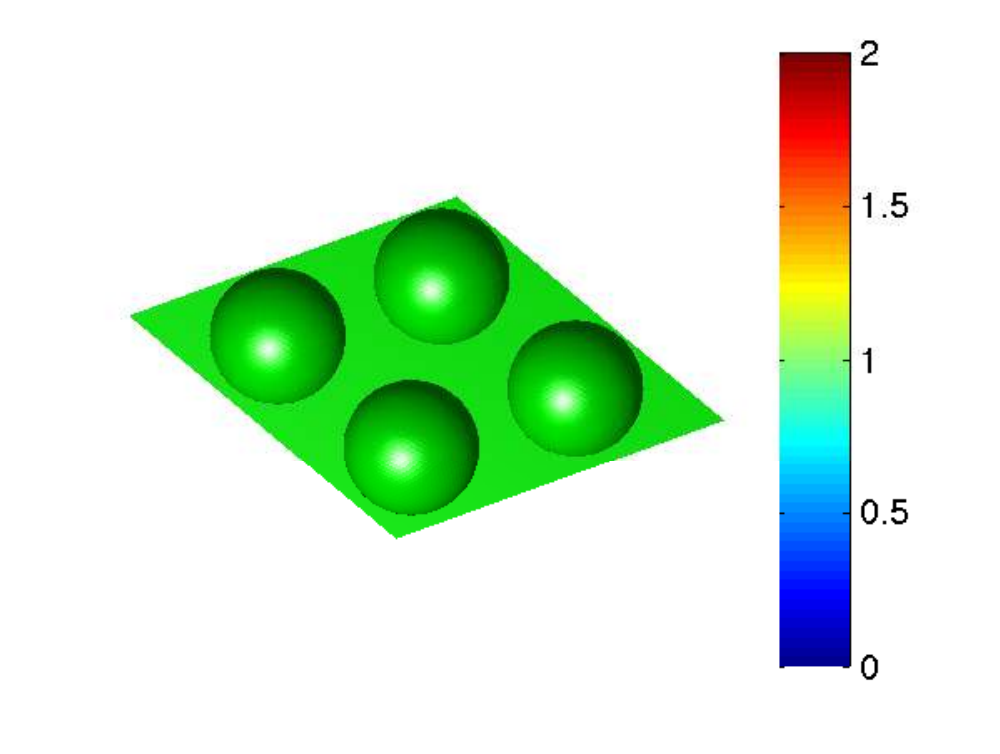} 
\caption{To maximize $\Lambda_{k}$, for $k=2$, 3, 4, and 5, we consider a mesh of a flat torus glued to 1, 2, 3, and 4 kissing spheres. The optimal conformal factors found, displayed here, are nearly constant. See \S\ref{sec:TopSpecOne}.}
\label{fig:KissingSphereFlatTorus1}
\end{center}
\end{figure}

\begin{figure}[t]
\begin{center}
\includegraphics[width=8cm]{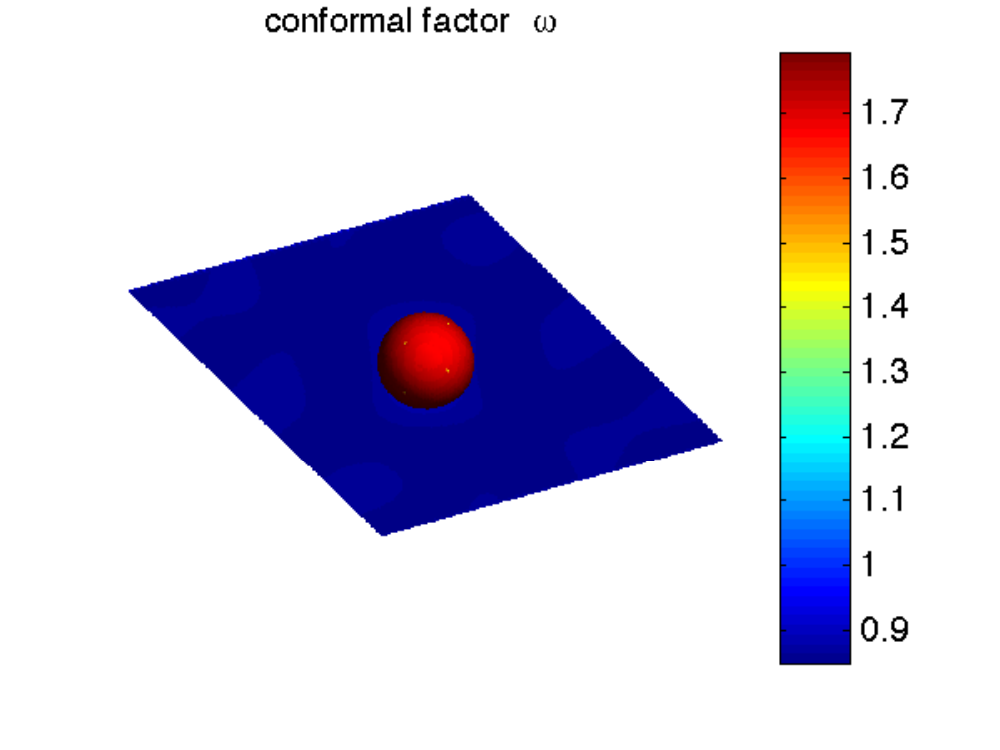} 
\includegraphics[width=8cm]{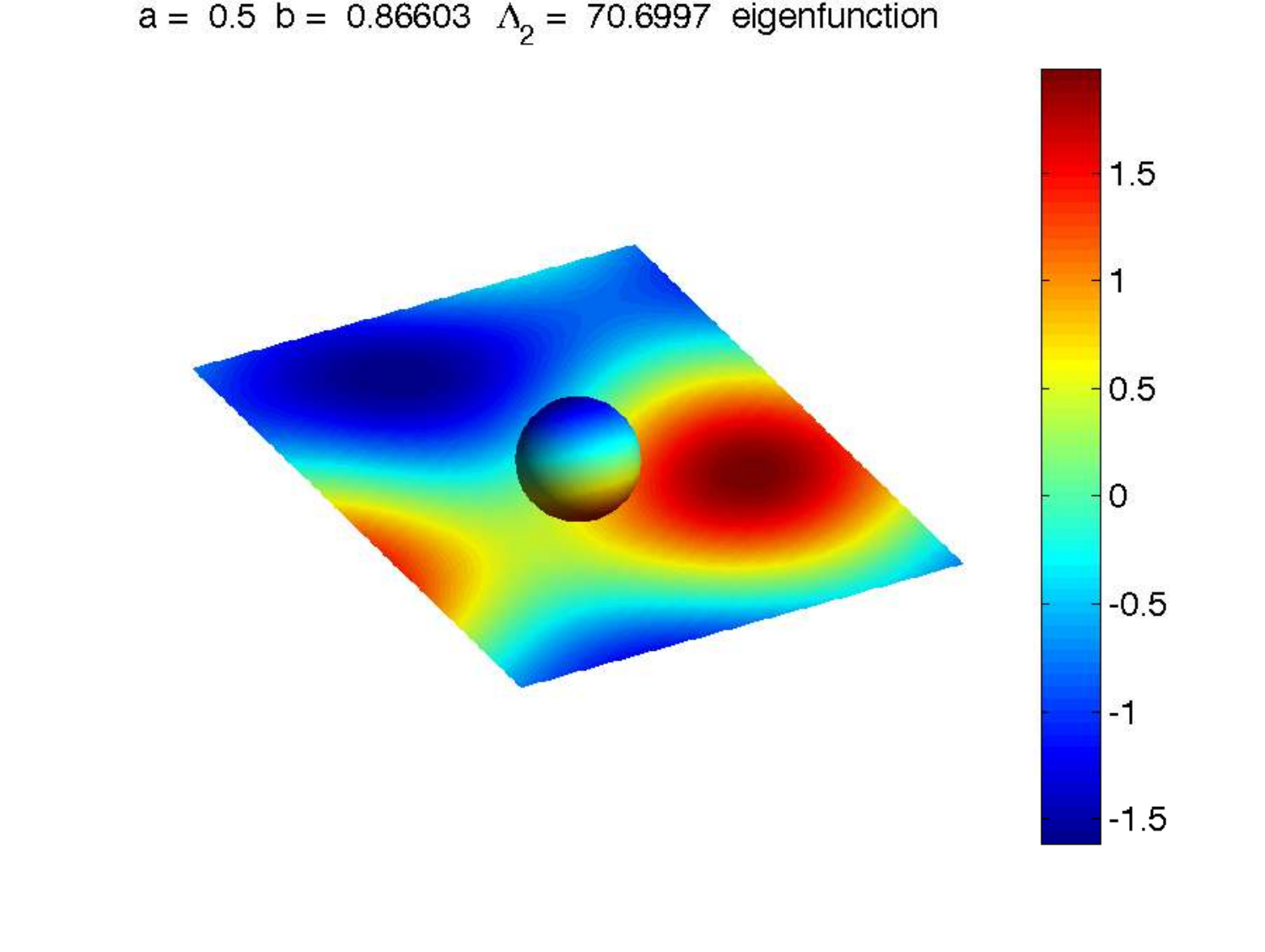} 
\includegraphics[width=8cm]{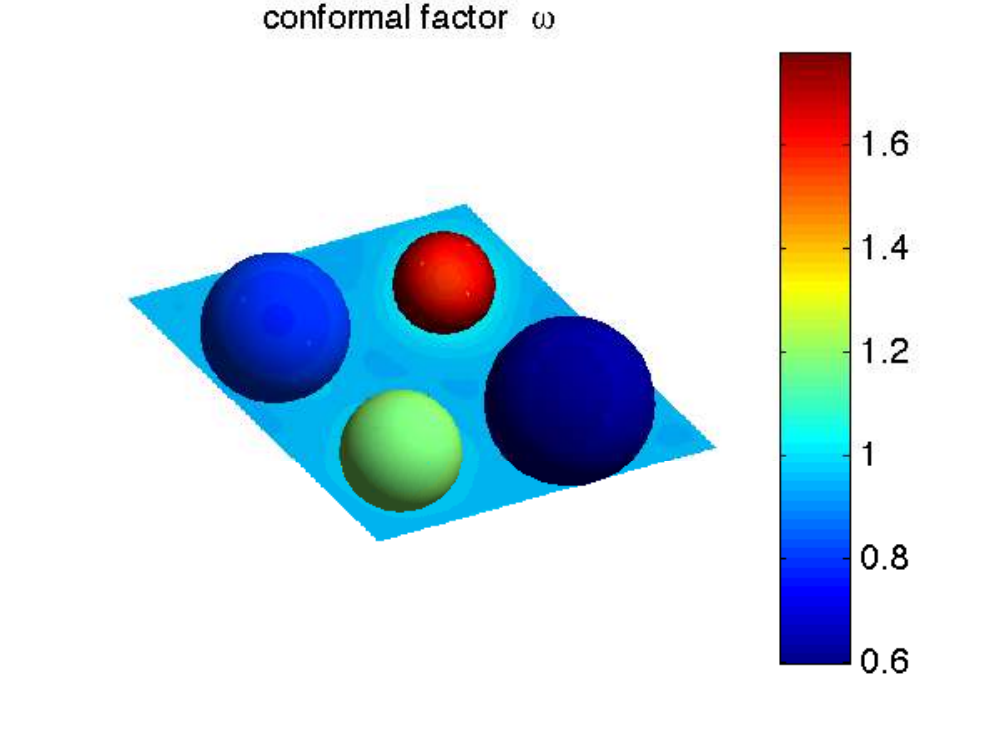} 
\includegraphics[width=8cm]{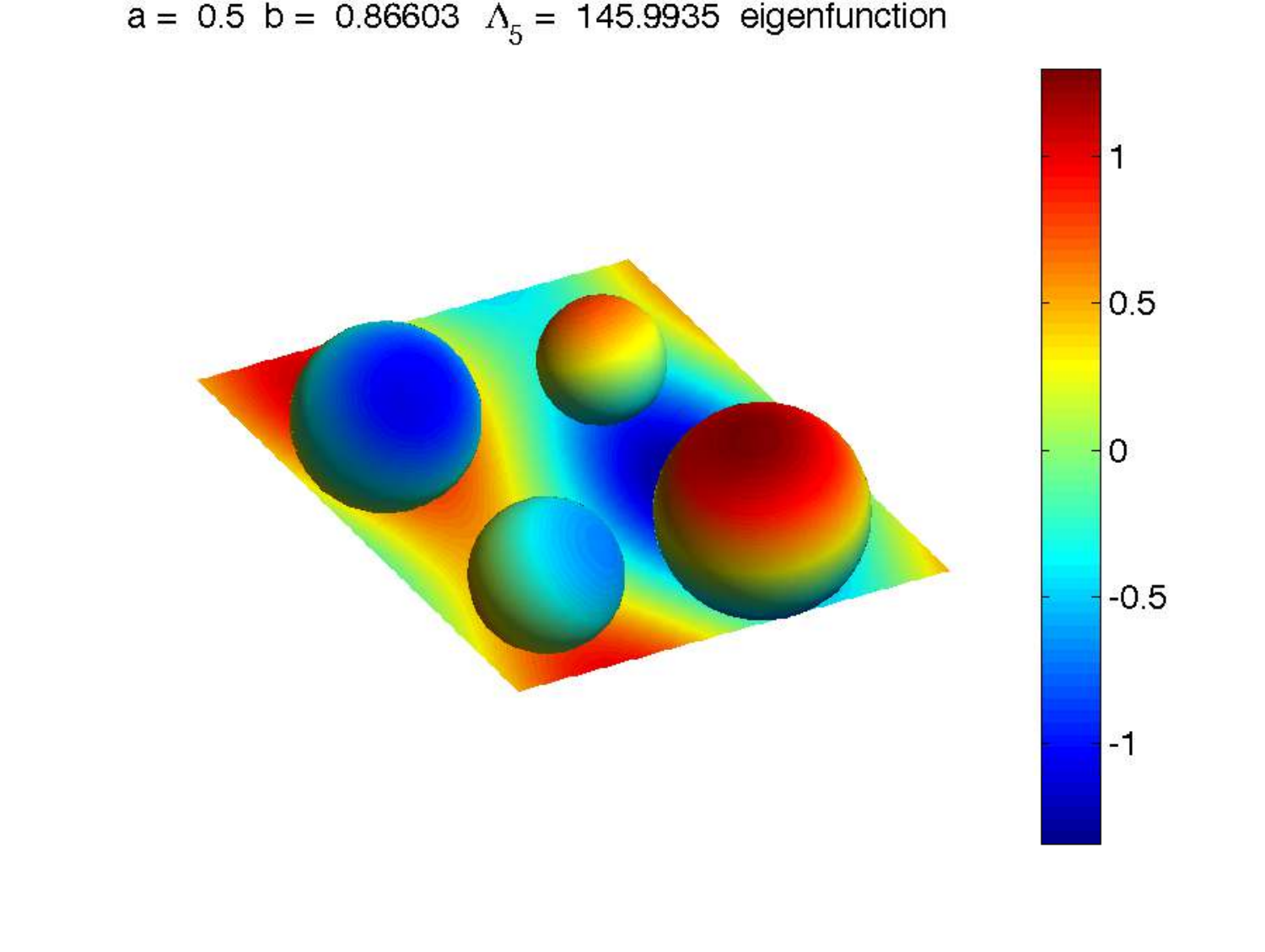} 
\caption{A mesh of a flat torus glued to one (top) and four (bottom) kissing spheres, used for maximizing $\Lambda_{2}$ and $\Lambda_{5}$ respectively. The figures on the left display the  optimal conformal factor and the figures on the right display an eigenfunction corresponding to $\lambda_k$. See \S\ref{sec:TopSpecOne}.  }
\label{fig:KissingSphereFlatTorus2}
\end{center}
\end{figure}

To compute optimal  configurations  for larger values of $k$, we proceed as follows as in \S\ref{sec:genus0} and use a mesh which consists of a torus which has been deformed locally at $k-1$ points. In effect, this mesh approximates the configuration of $k-1$ spheres ``kissing'' a flat tori. 
For example, to construct this mesh for $k=2$, we remove one face from the mesh representing the flat tori and one face from the mesh representing the sphere. We then identify the edges associated with the missing faces of these two punctured  surfaces. 
As discussed in \S\ref{sec:disconnectedUnion}, for an equilateral flat torus, $(a,b) = \left(\frac{1}{2},\frac{\sqrt{3}}{2}\right)$, and spheres of appropriate size, we can obtain $k$-th eigenvalue at least as large as 
\begin{equation}\label{eq:EqTorusSpheres} 
\Lambda_k = \frac{8\pi^2}{\sqrt{3}} + 8\pi (k-1). 
\end{equation} 
On this mesh, we use the finite element method to compute the Laplace-Beltrami eigenvalues and initialize a quasi-Newton optimization method using a random conformal factor. We observe that the maximal eigenvalue is achieved when the conformal factor is nearly constant over the mesh. See Figure  \ref{fig:KissingSphereFlatTorus1}, where the optimal values are given by $\Lambda_{2} = 70.70 $, $\Lambda_{3} = 95.80$, $\Lambda_{4} = 120.94$, and $\Lambda_{5} = 146.06$ which are indeed very close to those given in \eqref{eq:EqTorusSpheres}. 

To further test these optimal solutions, we consider  several other configurations of spheres and flat tori. As shown in  Figure \ref{fig:KissingSphereFlatTorus2}, we take $k=2$ and consider a torus glued to a sphere with radius  a factor of 0.7 of the optimal radius. Initializing the optimization method with a constant uniform conformal factor, an optimal conformal factor is achieved where the sphere has a relatively high conformal value and the torus has relatively low conformal value. See Figure   \ref{fig:KissingSphereFlatTorus2} (top left). An  eigenfunction associated to $\lambda_2$ is plotted in Figure   \ref{fig:KissingSphereFlatTorus2} (top right).
For $k = 5$, we consider a torus glued to 4 spheres which have radii a factor of 
0.75, 0.9, 1.1, and 1.25 of the optimal radius. Again initializing the optimization method with a constant uniform conformal factor, we obtain the conformal factor in 
Figure   \ref{fig:KissingSphereFlatTorus2} (bottom left). An eigenfunction associated to $\lambda_5$ is plotted in Figure   \ref{fig:KissingSphereFlatTorus2} (bottom right). 
In these two experiments, the optimal numerical values $\Lambda_{2} = 70.6997$ and $\Lambda_{5} = 146.9935$ are close to the values given in  \eqref{eq:EqTorusSpheres}. 

\medskip

Finally, we report the results for one additional computational experiment. Recall that our proposed numerical method is only able to find local maxima of the non-concave optimization problem \eqref{eq:opt:Linfty}. In addition to many randomly initialized initial configurations, configurations with localized Gaussians, and configurations consisting of glued spheres and tori, we initialized the method using one additional  configuration, two kissing flat equilateral tori. For a moment, consider two embedded tori  stacked on each other so that the holes are aligned and the contact is smooth (Homer would think of a stack of donuts). This configuration is of a different type than two kissing balls since the perturbation occurs along a one-dimensional submanifold rather than at a single point. Since this type of perturbation is more difficult to analyze, we thought that it would be useful to check this configuration numerically. However, as we demonstrated in  \S\ref{sec:FlatTori} and \S\ref{sec:tori} the eigenvalues associated with embedded tori are generally not as large as those associated with flat tori. Thus, we consider gluing two  equilateral flat tori along a strip as shown in Figure \ref{fig:KissingTori}. Here colors and arrows indicate the glued edges. Numerically, we remove a strip from each of the two flat tori and identify element vertices and element edges along the cut edges. This constructed  surface has genus $\gamma=1$, as verified numerally using the Euler characteristic of the mesh. The first few eigenvalues of this configuration are given in Table \ref{tab:ConfofmalSpectrum}. The value of $\Lambda_{2}$ is very small as compared to \eqref{eq:EqTorusSpheres} with $k=2$.

\begin{figure}[t]
\begin{center}
\includegraphics[width=8cm]{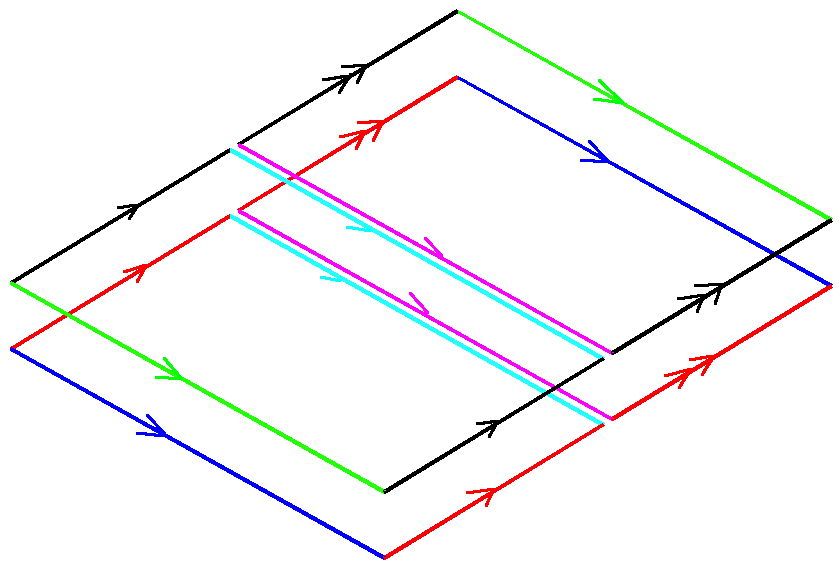} 
\caption{Kissing  equilateral flat tori. Edges with the same color are glued together. See \S\ref{sec:TopSpecOne}.}
\label{fig:KissingTori}
\end{center}
\end{figure}

\section{Discussion and further directions} \label{sec:Disc}
We have presented a computational method for approximating the conformal and topological spectra, as defined in \eqref{eq:ConEig} and \eqref{eq:TopEig}.  Our method is based on a relaxation, given in \eqref{eq:opt:Linfty}, for which we  prove existence of a minimizer (see Proposition \ref{prop:Existence}).  Based on the results of extensive computations, we make the following conjecture. 
\begin{cnjctr} The following hold for the topological spectrum:
\begin{itemize}
\item $\Lambda^t_k(0) = 8 \pi k$, attained by a sequence of surfaces degenerating to a union of $k$ identical round spheres. 
\item $\Lambda^t_k(1) = \frac{8\pi^2}{\sqrt{3}} + 8\pi (k-1)$, attained by a sequence of surfaces degenerating into a union of an equilateral flat torus and $k-1$ identical round spheres. 
\end{itemize}
\end{cnjctr}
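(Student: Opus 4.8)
Since the final statement is a conjecture rather than a theorem, what follows is a research strategy rather than a complete argument. In each case the lower bound is an elementary consequence of the material above, and the matching upper bound is the crux.

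\textbf{Lower bounds.} For $\gamma=0$, take the disjoint union of $k$ round spheres of area $1/k$ each. By the spectrum-of-disconnected-manifolds property and the dilation property this surface has $\Lambda_{k-1}=0$ (the $k$ locally constant functions) and first nonzero normalized eigenvalue $\Lambda_k=8\pi k$; realizing it as the degenerate limit of smooth genus-$0$ surfaces obtained by joining the spheres with thin necks (\S\ref{sec:disconnectedUnion}, \S\ref{sec:KissingSpheres}, cf.\ \cite{Colbois2003}) gives $\Lambda^t_k(0)\ge 8\pi k$, which is \cite[Corollary 1]{Colbois2003}. For $\gamma=1$, replace one sphere by an equilateral flat torus of the appropriate volume: the computation of \S\ref{sec:disconnectedUnion} shows the $k$-th normalized eigenvalue of the disjoint union equals $\Lambda_1(\mathbb T^2,g_{\mathrm{eq}})+(k-1)\Lambda_1(\mathbb S^2,g_0)=\tfrac{8\pi^2}{\sqrt3}+8\pi(k-1)$, and the same necking construction yields $\Lambda^t_k(1)\ge\tfrac{8\pi^2}{\sqrt3}+8\pi(k-1)$. (Equivalently this is \eqref{eq:topGap} with $\ell=1$ together with Nadirashvili's theorem $\Lambda^t_1(1)=\tfrac{8\pi^2}{\sqrt3}$ \cite{Nadirashvili1996}.)

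\textbf{Upper bounds: the plan.} I would prove the reverse inequalities by induction on $k$, the base cases being Hersch's $\Lambda^t_1(0)=8\pi$ \cite{hersch1970}, Nadirashvili's $\Lambda^t_2(0)=16\pi$ \cite{Nadirashvili2002}, and Nadirashvili's $\Lambda^t_1(1)=\tfrac{8\pi^2}{\sqrt3}$ \cite{Nadirashvili1996}. Fix $\gamma\in\{0,1\}$ and $k$, take a maximizing sequence $(M_n,g_n)$ with $M_n\in\mathcal M(\gamma)$, $\vol(M_n,g_n)=1$, and $\Lambda_k(M_n,g_n)\uparrow\Lambda^t_k(\gamma)$, and (by uniformization) normalize $g_n$ to constant curvature in its conformal class, recording the point $\tau_n$ of moduli space and the conformal factor $\omega_n$ (for $\gamma=0$ the moduli space is a point, so only $\omega_n$ survives). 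The first step is a concentration-compactness / bubble-tree analysis of this sequence, modelled on the existence and regularity theory for extremal metrics of the first conformal eigenvalue \cite{Nadirashvili2010,Petrides2013,Kokarev2011} and its extension to higher eigenvalues \cite{Kokarev2011}: after passing to a subsequence, the measures $\omega_n\,d\mu_{g_n}$ converge weak-$*$ to a measure supported on a (possibly degenerate) base surface together with finitely many bubbles, each of which, after rescaling, carries a smooth conformal metric on $\mathbb S^2$. One then argues that $\Lambda^t_k(\gamma)$ is realized as the $k$-th normalized eigenvalue of the resulting, possibly disconnected and possibly singular, limiting object, with the nonzero eigenvalue slots distributed among its components exactly as in the disconnected-union bookkeeping of \S\ref{sec:disconnectedUnion}.

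\textbf{Upper bounds: the main obstacle.} The heart of the matter is to classify the limiting bubble trees and show none exceeds the conjectured value. The quantitative input one wants is an \emph{additivity-with-gap} principle: if the limit has components $N_0,\dots,N_r$ with $N_0$ of genus $\gamma$ and $N_1,\dots,N_r$ spheres, joined so that each $N_i$ with $i\ge1$ contributes one extra zero mode, then $\Lambda^t_k(\gamma)=\max\sum_i \Lambda_{j_i}(N_i)$ over $\sum_i(j_i+1)=k+1$; bounding $\Lambda_j(N_i)\le \Lambda^t_j(0)$ on each sphere and $\Lambda_j(N_0)\le\Lambda^t_j(\gamma)$ on the base then collapses the maximum, via the inductive hypothesis and Nadirashvili's torus theorem, to $8\pi k$ when $\gamma=0$ and to $\tfrac{8\pi^2}{\sqrt3}+8\pi(k-1)$ when $\gamma=1$. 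Two difficulties are what keep this a conjecture. First, the bubble decomposition must be made rigorous and one must show no eigenvalue ``mass'' leaks onto the collapsing necks or, for $\gamma=1$, into the thin part of moduli space (a long thin torus pinches to a sphere, which is consistent, but must be proved rather than assumed). Second, and more seriously, the bound $\Lambda_j(N_i)\le 8\pi j$ on a \emph{connected} spherical bubble is itself essentially the statement being proved, so the induction is genuinely simultaneous over all components; closing it requires ruling out that a connected smooth maximizer on a genus-$\gamma$ surface beats the bound, which seems to demand harmonic-map regularity of the extremal (generalized) metric together with a delicate count of which eigenvalue index each bubble occupies --- the step where the real work lies, and where, for $\gamma=1$, pinning down the equilateral conformal structure on the torus component (rather than merely some flat torus) is the extra obstacle beyond the $\gamma=0$ case.
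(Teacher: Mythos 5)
The statement you are asked about is posed in the paper as a conjecture, and the paper offers no proof of it: its support consists of the rigorous lower bounds plus a large body of numerical evidence. Your lower-bound argument is correct and coincides with the paper's own justification --- the disconnected-union bookkeeping of \S\ref{sec:disconnectedUnion} (equivalently \eqref{eq:topGap} with $\ell=0$, respectively $\ell=1$ combined with Nadirashvili's $\Lambda^t_1(1)=\frac{8\pi^2}{\sqrt{3}}$ \cite{Nadirashvili1996}), with the degenerate configurations realized as limits of smooth genus-$\gamma$ surfaces as in \cite{Colbois2003}. Where you genuinely diverge is on attainment and the upper bound: the paper makes no analytic attempt at all, but instead maximizes $\Lambda_k$ numerically over conformal factors in $\mathcal A(M,g_0,\omega_-,\omega_+)$ (and, for $\gamma=1$, over the moduli parameters $(a,b)\in F$) using the eigenvalue derivatives of Propositions \ref{prop:S2derivs}, \ref{prop:FTderivs} and \ref{prop:FTderivs2} within a quasi-Newton scheme, on meshes of the sphere, flat tori, $k$ kissing spheres, and a flat torus glued to $k-1$ spheres of varying radii; the computed optima cluster at $8\pi k$ and $\frac{8\pi^2}{\sqrt{3}}+8\pi(k-1)$ up to discretization error, and that is the entire basis for the conjecture. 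Your proposed concentration-compactness/bubble-tree program, modelled on the existence and regularity theory of \cite{Nadirashvili2010,Petrides2013,Kokarev2011}, is a reasonable route toward an actual proof and goes beyond anything in the paper; you are also right to identify its two genuine obstacles (excluding spectral mass on collapsing necks or in the thin part of moduli space, and the fact that the per-bubble bound $\Lambda_j\leq 8\pi j$ is itself part of what must be proved, so the induction is simultaneous across components, with the additional issue for $\gamma=1$ of pinning down the equilateral conformal structure). Since you state clearly that you do not close these gaps, your submission is accurate in scope: what you have established rigorously is exactly the two lower bounds, which is also all the paper establishes rigorously.
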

The first part of this conjecture was also stated by Nadirashvili in \cite{Nadirashvili2002}. A proof of the conjecture involving $\Lambda^t_k(0)$  would imply that  the lower bound, $\Lambda^t_k(0) \geq 8 \pi k$, proven in \cite[Corollary 1]{Colbois2003}, is tight. This conjecture is proven for $k=1$ and $k=2$ in   \cite{hersch1970} and \cite{Nadirashvili2002} respectively. The conjecture involving $\Lambda^t_k(1)$ agrees with the eigenvalue gap estimate \eqref{eq:SpectralGap}, proven in \cite{Colbois2003}, and the result of  \cite{Nadirashvili1996} for $k=1$. The relatively large value of $\Lambda_k$ for the configuration consisting of a  union of an equilateral flat torus and $k-1$ identical round spheres was  recently used by A. Karpukhin  to show that a number of extremal metrics are not maximal \cite{karpukhin2013b}. 

For dimension $n=2$, Weyl's law states that for any fixed surface $(M,g)$, $\Lambda_k(M,g) \sim 4 \pi k$ as $k\to \infty$. The conjectured topological spectrum for genus $\gamma=0,1$ has asymptotic behavior $\Lambda^t_k(\gamma) \sim 8 \pi k$. Thus, the conjecture implies that for fixed $k$, there exist surfaces with $k$-th eigenvalue which exceed the asymptotic estimate given by  Weyl's law by no more than a factor of two. 
As a comparison, we proved in \S\ref{sec:FlatTori} that among flat tori, $\Lambda_k$ has a local maximum with value  
$ 4\pi^2 \left\lceil \frac{k}{2} \right\rceil^2 \left( \left\lceil \frac{k}{2} \right\rceil^2  - \frac{1}{4}\right)^{-\frac{1}{2}}$. 
For $k$ large, we obtain $\Lambda_k \sim 2 \pi^2 k$. Noting that $4\pi< 2 \pi^2 <  8 \pi$, this rate lies between Weyl's estimate and the conjectured topological spectrum for genus $\gamma=1$. 

In \S\ref{sec:TopSpecOne}, we used an explicit parameterization of the genus one moduli space to compute the topological spectrum. Higher genus moduli spaces ({\it e.g.} $\gamma=2$) could in principle be treated in the same way \cite{Imayoshi1992,Buser2010}, although we do not attempt this here. For genus $\gamma=2$, \eqref{eq:lam1genus2} and the spectral gap \eqref{eq:SpectralGap} together imply that  
$$\Lambda^t_k(2) \geq 8 \pi (k+1)$$
 where the lower bound is attained by attaching $k-1$ spheres to a Bolza surface (a singular surface which is realized as a double branched covering of the sphere). It was observed by B. Colbois and A. El Soufi  \cite[Corollary 3.1]{Colbois2012} that this bound is not tight; the union of two equilateral flat tori gives a  higher second eigenvalue than the union of a Bolza surface with a round sphere. The high genus topological spectrum is largely open and is a very interesting  future direction.

\clearpage

\begin{table}[t]
\begin{center}
\begin{tabular}{c|c c c c c c}
& & square & equilateral & horn emb. &Homer  & kissing   \\
$k$& sphere & flat torus & flat torus& torus &Simpson & tori \\
\hline
1 & 25.13 & 39.47 &45.58 &23.21 &7.464 &34.21 \\
2 & 25.13 & 39.47 &45.58 &23.21 &16.45 &34.21\\
3 & 25.13 & 39.47 &45.58 &30.63 &19.94 &43.98\\
4 & 75.39 & 39.47 &45.58 &66.58 &20.89  & 43.98\\
5 & 75.39 & 78.95 &45.58 &66.58  &41.23 &78.22 \\
6 & 75.39 & 78.95 &45.58 &78.80  &69.83 &78.22 \\
7 & 75.39 & 78.95 &136.7 &83.71 &85.40 &78.22\\
8 & 75.39 & 78.95 &136.7 &83.71 &92.32 & 78.22
\end{tabular}
\bigskip

\begin{tabular}{c|c    c c c   }
&   $k$ kissing &  best flat & best embedded & Equil. torus   \\
&   spheres & torus  & torus & and $k-1$  \\
$k$ & \eqref{eq:kissingspheres}& \eqref{eq:flattori_exact}&\eqref{eq:supEmbTori} & spheres \eqref{eq:EqTorusSpheres} \\
\hline
1  &25.13 & 45.58   &23.47 & 45.58\\
2 &50.26 &45.58   &23.47 & 70.71\\
3 &75.39 &81.55   &65.09 & 95.85\\
4 &100.5 &81.55  &65.09 & 120.9 \\
5  &125.6 &120.1  &108.34 & 146.1\\
6 &150.7 &120.1  &108.34 & 171.2\\
7  &175.9 &159.2 &150.25 & 196.3\\
8  &201.0 &159.2 &150.25 & 221.5
\end{tabular}
\bigskip

\begin{tabular}{c| c c  }
$k$ & $\Lambda^t_k (0)$ & $\Lambda^t_k (1)$ \\
\hline
1 &25.13 &45.58   \\
2 &50.26 & 70.71 \\
3 &75.39 & 95.85 \\
4 &100.5 & 120.9  \\
5 &125.6 &146.1 \\
6 &150.7 & 171.2 
\end{tabular} 
\end{center}
\caption{A comparison of various volume-normalized eigenvalues, $\Lambda_k(M,g) = \lambda_k(M,g)\cdot \vol(M,g)$. This is equivalent to $\lambda_k(M,g)$ after the metric has been normalized to have unit volume. 
The first table are the Laplace-Beltrami  eigenvalues of the sphere, square flat torus $(a,b) = \left( 0, 1 \right)$,  equilateral flat torus $(a,b) = \left( \frac{1}{2}, \frac{\sqrt{3}}{2} \right)$, horn embedded torus,   Homer Simpson, and kissing equilateral flat tori as discussed  in 
\S\ref{sec:Sphere},  \S\ref{sec:FlatTori},  \S\ref{sec:FlatTori}, \S\ref{sec:tori},  \S\ref{sec:homer}, and \S\ref{sec:TopSpecOne}
respectively. 
The second table are the Laplace-Beltrami eigenvalues for varying Riemannian surfaces: 
$k$ kissing spheres, best flat tori, best embedded tori, and the disjoint union of an equilateral torus and $k-1$ spheres as defined in 
\eqref{eq:kissingspheres},   \eqref{eq:flattori_exact}, \eqref{eq:supEmbTori}, and \eqref{eq:EqTorusSpheres}.
The third table gives the computed  topological spectra for genus $\gamma=0$ and $\gamma=1$ surfaces. }
\label{tab:ConfofmalSpectrum}
\end{table}%

\clearpage

\subsection*{Acknowledgements} We would like to thank Ahmad El Soufi, Alexandre Girouard, Richard Laugesen, Peter Li, and Stan Osher for useful conversations. We would like to thank Melissa Liu for help with the example in Remark \ref{rem:isom}. We would also like to thank the referees for their valuable comments.

{\small 
\bibliographystyle{amsalpha}
\bibliography{ManifoldOpt}}

\end{document}